\mathchardef\mhyphen="2D 
\theoremstyle{definition}
\newtheorem{defi}{Definition}[section]
\theoremstyle{remark}
\newtheorem{rem}[defi]{Remark}
\theoremstyle{plain}
\newtheorem{theorem}[defi]{Theorem}
\newtheorem{lem}[defi]{Lemma}
\newtheorem{cor}[defi]{Corollary}
\newtheorem{prop}[defi]{Proposition}
\newcommand{\N}{\ensuremath{\mathbb{N}}}
\newcommand{\R}{\ensuremath{\mathbb{R}}}
\newcommand{\e}{\varepsilon}
\newlength{\leftstackrelawd}
\newlength{\leftstackrelbwd}
\def\leftstackrel#1#2{\settowidth{\leftstackrelawd}%
{${{}^{#1}}$}\settowidth{\leftstackrelbwd}{$#2$}%
\addtolength{\leftstackrelawd}{-\leftstackrelbwd}%
\leavevmode\ifthenelse{\lengthtest{\leftstackrelawd>0pt}}%
{\kern-.5\leftstackrelawd}{}\mathrel{\mathop{#2}\limits^{#1}}}
\def\RelEnt{\mathscr{H}}
\def\RF{\mathscr{R}}
\newcommand{\curlI}{\mathcal{I}}
\def\X{\mathcal{X}}
\def\Y{\mathcal{Y}}
\def\Z{\mathcal{Z}}
\def\P{\mathcal{P}}
\DeclareMathOperator*{\sumsum}{\sum\!\sum} 
\DeclareMathOperator\LSI{LSI}
\DeclareMathOperator\TV{TV}
\DeclareMathOperator\av{av}
\def\Wasser{\mathcal{W}}
\newcommand{\Ric}{\operatorname{Ric}}
\def\cg{\hat\mu} 
\def\eff{\eta} 
\def\aver{\mu^{\av}}
\def\stat{\rho} 
\def\eps{\varepsilon} 
\def\sG{G} 
\newcommand{\comment}[1]{} 
\pgfplotsset{compat=1.18}
\title{Quantitative coarse-graining of Markov chains}
\author{Bastian Hilder\thanks{Centre for Mathematical Sciences, Lund University, PO Box 118, 221 00 Lund, Sweden.\ Email:\href{mailto:bastian.hilder@math.lu.se}{bastian.hilder@math.lu.se}}, Upanshu Sharma\thanks{Fachbereich Mathematik und Informatik, Freie Universit\"at Berlin, Arnimallee 9, 14195 Berlin, Germany.\ Email:\href{mailto:upanshu.sharma@fu-berlin.de}{upanshu.sharma@fu-berlin.de}}}
\date{\today}
\begin{document}
\maketitle

\begin{abstract}
Coarse-graining techniques play a central role in reducing the complexity of stochastic models, and are typically characterised by a mapping which projects the full state of the system onto a smaller set of variables which captures the essential features of the system. 
Starting with a continuous-time Markov chain, in this work we propose and analyse an \emph{effective dynamics}, which approximates the dynamical information in the coarse-grained chain. Without assuming explicit scale-separation, we provide sufficient conditions under which this effective dynamics stays close to the original system and provide quantitative bounds on the approximation error.  We also compare the effective dynamics and corresponding error bounds to the averaging literature on Markov chains which involve explicit scale-separation. We demonstrate our findings on an illustrative test example. 
\newline
\newline
\textbf{Keywords.} coarse-graining; continuous-time Markov chains; effective dynamics; functional inequalities; relative entropy; slow-fast system
\newline
\textbf{Mathematics Subject Classification (2020).} 34C29; 34E13; 39B62; 60B10; 60J27; 60J28
\end{abstract}

\section{Introduction}

Modelling of complex systems often leads to stochastic models with a wide range of spatial and/or temporal scales. Examples include molecular dynamics~\cite{AllenTildesley17}, chemical kinetics of biophysical systems~\cite{Kitano2001} and climate modelling~\cite{Arnold01}, just to name a few. Coarse-graining is an umbrella term for techniques used to approximate such large and complex systems by simpler and lower dimensional ones. Such an approximation has obvious utility from a computational perspective since a simulation of the full system is often infeasible due to the presence of multiple scales. These techniques are also important from a modelling and analytical viewpoint, since quantities of interest are often described by a small class of variables in the system.  In most situations, the quantities of interest are the slow degrees of freedom that contain information about the long-term dynamics, while the fast-scales are considered irrelevant when analysing long-term behaviour. 

Various relevant phenomenon, such as Markov state models~\cite{HusicPande18} in molecular dynamics, stochastic chemical kinetics~\cite{Gillespie07} for a chemical-reaction system and agent-based models~\cite{CastellanoFortunatoLoreto09} in social dynamics, are modelled by jump processes. Coarse-graining of such models has received considerable attention from practioners in recent years -- for instance to perform a data-driven identification of the transition-rate matrix for a reduced set of stable configurations in a molecular system~\cite{KubeWeber07,PyEmma15} or to deduce  subnetwork dynamics from large-scale biochemical systems~\cite{BraviSollich17}.  From a mathematical perspective, coarse-graining of jump processes is limited to systems with explicit scale-separation, i.e.\ presence of fast and slow scales. In this setup, as the ratio of fast to slow increases (typically characterised by a small parameter), the fast scales equilibrate with respect to the slow ones and consequently some form of averaging allows one to obtain a closed coarse-grained model for the slow variables~\cite{PavliotisStuart08,lahbabiLegoll13,zhang16,HilderPeletierSharmaTse20,MielkeStephan20,PeletierRenger20}. 

As evidenced by the references above, while coarse-graining of jump processes is extremely relevant from a practical viewpoint, its mathematical analysis has been limited to systems with explicit scale-separation. 
Inspired by the related ideas of \emph{conditional-expectation} closures by Legoll and Leli\`evre~\cite{legollLelievre10} for diffusion processes in molecular dynamics and \emph{optimal-prediction} closures by Chorin~\cite{Chorin2003}, in this work we propose a natural approximation for a coarse-grained jump process, called the \emph{effective dynamics} (following the work of~\cite{legollLelievre10}). The main aim of this work is to understand if and when this effective dynamics is indeed a \emph{good} lower-dimensional approximation of the full stochastic jump process. As a first systematic study of effective dynamics for jump processes, in this article we will restrict ourselves to coarse-graining of linear Markov jump processes on a finite state-space. Using entropy techniques and functional inequalities, in the first part of the article we present quantitative error estimates on the coarse-graining error in the absence of explicit scale-separation. In the second part of the article we analyse these estimates in the presence of explicit scale-separation and provide illustrative numerical experiments.

\subsection{Coarse-graining and effective dynamics}

In this work we will focus on Markov jump processes with linear jump rates on a finite state space $\X$ with an irreducible generator $L\in\R^{|\X|\times |\X|}$. The law of this process $t\mapsto \mu_t \in\P(\X)$ solves the forward Kolmogorov equation
\begin{equation}\label{eq:intro-ForKol}
\begin{aligned}
\partial_t\mu = L^T\mu,\\
\mu|_{t=0}=\mu_0,
\end{aligned}
\end{equation}
with initial data $\mu_0\in \P(\X)$. Here $\P(\X)$ is the space of probability measures on $\X$.  Since $L$ is a generator, it satisfies $L(x,x')\geq 0$ for all $x\neq x'$ in $\X$, with $\sum_{x'\in\X} L(x,x')=0$ for any $x\in\X$. Since $L$ is irreducible, the evolution~\eqref{eq:intro-ForKol} admits a positive stationary measure $\stat\in\P_+(\X)$, i.e.\ $L^T\stat=0$. Here $\P_+(\X)$ is the space of positive probability measure (i.e.\ with positive coordinates) on $\X$. Throughout this article we will take the time-marginal viewpoint by studying the forward Kolmogorov equations rather than working with the underlying jump processes.

As stated above, this work is inspired by molecular dynamics and chemical kinetics, where the states in $\X$ correspond to the stable configurations of a molecular system and species concentration respectively. In both these settings it is often crucial to derive lower-dimensional approximations of the reference  dynamics~\eqref{eq:intro-ForKol} so as to either reduce the computational complexity of the system under consideration or to focus on certain key features of the system, for instance the transition between two stable states~\cite{PyEmma15} or a particular subnetwork of reactions~\cite{BraviSollich17}. 

Such a lower-dimensional approximation is typically made possible by means of a so called \emph{coarse-graining map} 
\begin{equation*}
\xi:\X\rightarrow\Y,  \quad x\mapsto \xi(x) = y,
\end{equation*}
which characterises a projection onto a smaller subset of states, i.e. $\Y\subseteq \X$ with $|\Y|\leq |\X|$. Although often there is no explicit scale-separation present, the reduced set of states encoded in $y\in\Y$  typically characterise the `slow' behaviour of the system. The central aim of this work is to propose and study a \emph{Markovian} $\xi$-projection of the dynamics~\eqref{eq:intro-ForKol}. In the rest of the article we will use notation consistent with earlier works on coarse-graining of Markov chains~\cite{HilderPeletierSharmaTse20} and diffusion processes~\cite{legollLelievre10}. One could also make use of matrix notations (see Remark~\ref{rem:MatrixNotation} for details), but we avoid it in this article.

Using the notation $\Lambda_y=\{x\in\X:\xi(x)=y\}$ for the level-sets of $\xi$, in what comes later (see Section~\ref{subsec:CG-eff}), we will show  that the $\xi$-projection of~\eqref{eq:intro-ForKol}, i.e.\ the evolution of $\cg_t \coloneqq \xi_\#\mu_t \in \P(\Y)$ (push-forward of $\mu_t$ under $\xi$) is given by 
\begin{equation}\label{eq:intro-CG}
\partial_t\cg_t=\hat L_t^T\cg_t,
\end{equation}
with a time-dependent generator $t\mapsto \hat L_t \in \R^{|\Y|\times|\Y|}$ 
\begin{equation*}
\hat L_t(y_1,y_2)\coloneqq \sumsum_{x_1\in\Lambda_{y_1},x_2\in\Lambda_{y_2} } L(x_1,x_2)\mu_t(x_1|y_1),
\end{equation*}
where $\mu_t(\cdot|y)\in\P(\Lambda_y)$ is the conditional measure corresponding to $\mu_t$ (see~\eqref{def:marg-cond} for precise definition). In what follows, we will refer to~\eqref{eq:intro-CG} as the \emph{coarse-grained dynamics}, as it exactly characterises the time-marginal dynamics of the projected jump process. Intuitively, the generator $\hat L_t$ is an average of the original generator $L$ over the level-sets of the coarse-graining map $\xi$ with respect to the reference dynamics $\mu_t$. Note that the generator $\hat L_t$ is time-dependent and requires information about $\mu_t$ (in the definition of the generator). Therefore, even though~\eqref{eq:intro-CG} correctly captures the statistics of the projected process, it is as analytically and computationally intractable as  the original system~\eqref{eq:intro-ForKol}. It should also be noted that while~\eqref{eq:intro-CG} is the evolution of the time-marginals for the projected jump process, the projected process itself will not be Markovian.  

Inspired by recent developments~\cite{legollLelievre10} in coarse-graining of diffusion processes in the context of molecular dynamics, we propose the following linear \emph{effective dynamics} 
\begin{equation}\label{eq:intro-eff}
\partial_t\eff=N^T\eff,
\end{equation}
with a time-independent generator $N\in \R^{|\Y|\times|\Y|}$ 
\begin{equation}\label{def:intro-eff-gen}
N(y_1,y_2)\coloneqq \sumsum_{x_1\in\Lambda_{y_1},x_2\in\Lambda_{y_2} } L(x_1,x_2)\stat(x_1|y_1),
\end{equation}
where $\stat(\cdot|y)\in\P(\Lambda_y)$ is the conditional (stationary) measure corresponding to the stationary measure $\stat$ for the full dynamics~\eqref{eq:intro-CG}. 
The intuition behind the construction of the effective dynamics is that in practice the coarse-grained variables encode the slow features of the system and therefore it is natural to expect that the dynamics on the level-sets (characterised by conditional dynamics $\mu_t(\cdot|y)$) will equilibrate considerably faster than the full dynamics. Consequently, the level-set average with respect to $\mu_t(\cdot|y)$ in the generator for the coarse-grained dynamics can be approximated by an average with respect to the the stationary conditional measure $\stat(\cdot|y)$. A crucial advantage of using the effective dynamics~\eqref{eq:intro-eff} over the coarse-grained dynamics~\eqref{eq:intro-CG} is that the generator for the former is time-independent and therefore can be computed offline. 

However, by construction the effective dynamics is only an approximation of the coarse-grained dynamics. This leads to the natural question that will be discussed in this article: 
\begin{center}
\emph{Is the effective dynamics~\eqref{eq:intro-eff} a good approximation of the coarse-grained dynamics~\eqref{eq:intro-CG}, and if so can the error between the two be quantified? }
\end{center}

\subsection{Main results, novelty and outline of the article}
The answer to the question stated above is indeed affirmative, in that we provide sufficient conditions under which the effective dynamics stays close to the coarse-grained dynamics. We now briefly discuss these result. 

Our first main result is Theorem~\ref{thm:RelEntEst}, where for any $t>0$ we prove an error bound of the form (also see~\eqref{eq:Unif-time-est})
\begin{equation}\label{eq:intro-gen-bound}
\RelEnt(\cg_t|\eff_t) \leq \RelEnt(\cg_0|\eff_0) + C,
\end{equation}
where $\RelEnt(\cg_t|\eff_t)$ is the relative entropy (see~\eqref{def:relEnt} for definition) of the coarse-grained dynamics with respect to the effective dynamics at time $t$ and $C>0$ is a constant independent of $t$. Note that by the Csisz\'ar-Kullback-Pinsker (CKP) inequality (see~\eqref{def:CKP}), this also translates into a bound in total-variation distance. The key assumption to prove this error bound is that the conditional stationary measure $\stat(\cdot|y)\in \P(\Lambda_y)$ which appears in the generator $N$~\eqref{def:intro-eff-gen} for the effective dynamics satisfies a logarithmic-Sobolev inequality (shortened to log-Sobolev here onwards). Intuitively, this assumption encodes the underlying idea that the coarse-grained variables are the slow ones, and therefore the dynamics on the level sets of the coarse-graining map will equilibrate quickly to the conditional stationary measure $\stat(\cdot|y)$ (see Remark~\ref{rem:LSI},~\ref{rem:LSI-gen-est} for details). The constant appearing in this log-Sobolev inequality is a key component in the error estimate and it turns out that the size of the constant reflects how well the chosen coarse-graining map captures the scale-separation in the problem. Therefore, our result also gives a tool to measure the quality of a chosen coarse-graining map $\xi$. We refer to Section \ref{sec:choice_CG_map} for an example on how different choices of $\xi$ effect the log-Sobolev constant.

While~\eqref{eq:intro-gen-bound} is a uniform (stability) bound that holds under fairly weak conditions and states that the effective dynamics can at most be a constant distance (in total-variation metric) away from the coarse-grained dynamics, it is completely unclear if it is a sharp bound. To analyse this, in Section~\ref{sec:scale-sep} we look at averaging problems for Markov chains which introduce an explicit scale-separation via a small parameter $\e>0$. The slow variable is the natural choice for the coarse-graining map in this setting. One key observation is that, in general, the resulting effective dynamics depends on $\e$ and is different from the classical \emph{averaged dynamics} which corresponds to the limit $\e\rightarrow 0$. As a consequence, the classical averaging literature and related techniques do not apply to the effective dynamics. In Theorem~\ref{prop:eff-to-aver-eps} we show that the $\e$-dependent effective dynamics $\eff_t^\e$ does converge to the averaged dynamics. The main result here is an $\e$-version of the bound~\eqref{eq:intro-gen-bound}, which for a fixed $\e>0$ and with same initial data $\cg^\e_0=\eff^\e_0$ reads (see Theorem~\ref{thm:eps-est-positive-init})
\begin{equation}\label{eq:intro-eps-bound}
\sup_{t\in [0,T]}\RelEnt(\cg^\e_t|\eff^\e_t) \leq  C(T)\e  ,
\end{equation}
where the finite constant $C$ is independent of $\e$. The bound~\eqref{eq:intro-eps-bound} is indeed \emph{good} in that the error vanishes as $\e\rightarrow 0$, and is expected to be \emph{sharp} as the suggested linear decay is in line with the numerical experiments  (see Section~\ref{sec:numericalExp}). Note that, as compared to the general estimate~\eqref{eq:intro-gen-bound}, in the multiscale setting the estimate has poor long-time scaling due to the presence of $T$ in the right-hand side of~\eqref{eq:intro-eps-bound}. 

\paragraph{Novelty.} A systematic analysis of coarse-graining and Markovian approximations in the absence of scale-separation has been limited to diffusion processes~\cite{legollLelievre10, ZhangHartmannSchutte16, LegollLelievreOlla17, LegollLelievreSharma18, DLPSS18, LelievreZhang18, LegollLelievreSharma18,HartmannNeureitherSharma20}.
This work provides first quantitative results on coarse-graining of jump processes in the absence of scale-separation. To the best of our knowledge, effective dynamics of the type presented in this article have not been considered for Markov chains and our quantitative results clearly indicate their efficacy in approximating the coarse-grained variables. 
Ideas related to coarse-graining of Markov jump processes have been studied under the term \emph{lumpability} (see for instance~\cite{Buchholz94}).
However, these results mainly try to extract information on the full Markov chain from the reduced Markov chain without error and seem unrelated to our results.

While averaging techniques for handling multiscale problems in Markov chains are well understood~\cite{PavliotisStuart08,lahbabiLegoll13,zhang16,HilderPeletierSharmaTse20,MielkeStephan20,PeletierRenger20}, their connection to coarse-graining and effective dynamics is completely new. In this article we explore these connections in detail and provide new insights into and error bounds for averaging problems. Furthermore, our analysis predominantly exploits entropy techniques and functional inequalities, which are traditionally used to study long-time behaviour of jump processes and not for multiscale problems. A key outcome of our results in the multiscale setting is that the effective dynamics converges to the averaged dynamics in a fairly general setting -- such results only hold in a limited setting for diffusions~\cite{HartmannNeureitherSharma20}.  

\paragraph{Outline of the article.}

The remainder of this article is organised as follows. In Section~\ref{sec:setup} we introduce notations, preliminaries and setup the main evolutions. Section~\ref{sec:generalEstimate} deals with general coarse-graining error estimates and Section~\ref{sec:scale-sep} with error estimates in the presence of explicit scale-separation in the system.  
In Section~\ref{sec:numericalExp} we illustrate our theoretical findings with numerical examples.  We conclude with discussions on various related issues in Section~\ref{sec:discussion}. Appendix~\ref{app:forKol-low} collects some crucial results on irreducible generators which will be used throughout this article.

\section{Setup and preliminaries}\label{sec:setup}
In Section~\ref{subsec:setup} below we present some preliminaries and notations. In Section~\ref{subsec:CG-eff} we introduce the two main equations that will be studied throughout the article, the coarse-grained and the effective dynamics, and present some of their useful properties.

\subsection{Preliminaries and notation}\label{subsec:setup}

We consider a continuous-time Markov chain on a finite state space $\X$ with irreducible generator $L\in\R^{|\X|\times |\X|}$ and corresponding forward Kolmogorov equation
\begin{equation}\label{eq:ForKol}
\begin{aligned}
\partial_t\mu = L^T\mu,\\
\mu|_{t=0}=\mu_0,
\end{aligned}
\end{equation}
where $\mu_0\in \P(\X)$. 
We will use $\mu_t$ to denote the time slice of $\mu$ at time $t$. Since $L$ is irreducible and $\X$ is finite (which implies that $L$ is positive recurrent),~\eqref{eq:ForKol} admits a unique and positive stationary measure $\stat\in\P_+(\X)$, i.e.\ $L^T\stat=0$ (see for instance~\cite[Theorem 3.5.2]{Norris98}).

We define the \emph{coarse-graining} map
\begin{equation*}
\xi:\X\rightarrow\Y,
\end{equation*}
where $\Y$ is a finite set with $|\Y|\leq |\X|$. 
For any $y\in\Y$ we use
\begin{equation*}
\Lambda_y\coloneqq\{x\in\X: \xi(x)=y\},
\end{equation*}
for the $y$-level set of $\xi$. Any $\nu\in\P(\X)$, can be decomposed into the corresponding marginal measure (or marginal probability distribution) $\xi_\#\nu\in\P(\Y)$ and for any $y\in\Y$ the family of conditional measures (or conditional probability distributions) $\nu(\cdot|y)\in\P(\Lambda_y)$  defined as
\begin{equation}\label{def:marg-cond}
\forall y\in\Y: \ \xi_\#\nu(y)\coloneqq\sum_{x\in\Lambda_y}\nu(x),\quad
\forall x\in \Lambda_y: \ \nu(x|y)=\frac{\nu(x)}{\xi_\#\nu(y)},
\end{equation}
where the conditional measure $\nu(\cdot|y)$ is defined $\xi_\#\nu$-almost everywhere. In all the applications of this decomposition in this article $\xi_\#\nu$ will be a positive probability measure and so the conditional measure will be defined everywhere. Consequently, for any $x\in\Lambda_y$ we can write $\nu(x)=\nu(x|y)\xi_\#\nu(y)$.

We now introduce some notation for vectors and matrix norms that will be used throughout this article. We write $\|\rho\|_{\R^{|\Z|}}$  for the standard Euclidean norm of a vector $\rho$ in $\R^{|\Z|}$. For a probability measure $\rho\in \P(\Z)$, the total-variation norm is $\|\rho\|_{\TV}\coloneqq\sum_{z\in\Z}|\rho(z)|$. For a matrix $A\in \R^{|\Z|\times|\Z|}$, we will use the Frobenius and supremum norm respectively defined as (with $n=|\Z|$)
\begin{equation*}
\|A\|^2_{\R^{|\Z|\times|\Z|}}\coloneqq\max_{1\leq i,j \leq n} |A_{ij}|^2, \ \ \|A\|_{\infty}\coloneqq\max_{1\leq i \leq n} \sum_{j=1}^{n} |A_{ij}|.
\end{equation*}

Next we introduce the relative entropy and Fisher information. For $\nu,\zeta\in\P(\X)$ we define the \emph{relative entropy} of $\nu$ with respect to $\zeta$ as 
\begin{equation}\label{def:relEnt}
\RelEnt(\nu|\zeta)\coloneqq\begin{dcases} \sum_{x\in\X} \nu(x) \log f(x),  \ \ &f=\frac{d\nu}{d\zeta}, \\
+\infty, & \text{otherwise}.
\end{dcases}
\end{equation}
The Csisz\'ar-Kullback-Pinsker (CKP) inequality connects the total-variation norm on $\P(\X)$ to the relative entropy via
\begin{equation}\label{def:CKP}
\forall \nu,\zeta\in\P(\X): \ \ \|\nu-\zeta\|_{\TV} \leq \sqrt{2\RelEnt(\nu|\zeta)}.
\end{equation}
For $\nu,\zeta\in\P(\X)$ and generator  $M\in \R^{|\X|\times|\X|}$ we define the \emph{$M$-Fisher information} of $\nu$ with respect to $\zeta$ as 
\begin{align}
\RF_{M}(\nu|\zeta)&\coloneqq \sum_{x\in\X} \Bigl[ - \Bigl(M\log\frac{\nu}{\zeta}\Bigr)(x) + \frac{\zeta(x)}{\nu(x)} \Bigl(M\frac{\nu}{\zeta} \Bigr)(x)  \Bigr]\nu(x)  \label{def:FI}\\
&=\sumsum_{x,x'\in\X} \Bigl[ -M(x,x')\log\Bigl( \frac{\nu(x')}{\zeta(x')}\Bigr) + M(x,x')\Bigl( \frac{\nu(x')\zeta(x)}{\nu(x)\zeta(x')}\Bigr)  \Bigr]\nu(x). \notag
\end{align}
The following lemma states that the Fisher information is non-negative. 
\begin{lem}\label{lem:pos-FI}
For any generator $M\in \R^{|\X|\times|\X|}$ and $\nu,\zeta\in\P_+(\X)$, the $M$-Fisher information of $\nu$ with respect to $\zeta$ is non-negative, i.e.\ $\RF_M(\nu|\zeta)\geq 0$.
\end{lem}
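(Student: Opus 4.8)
The plan is to express $\RF_M(\nu|\zeta)$ in terms of the single density $f\coloneqq \nu/\zeta$ and then exploit the defining property of a generator, namely that each of its rows sums to zero, in order to recast the double sum so that its summand is manifestly non-negative. The only genuine input beyond bookkeeping will be the elementary scalar inequality $u-\log u -1\geq 0$ for $u>0$.

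First I would start from the second line of~\eqref{def:FI} and substitute $f(x)=\nu(x)/\zeta(x)$, using $\zeta(x)/\nu(x)=1/f(x)$ and $\log(\nu(x')/\zeta(x'))=\log f(x')$, to obtain
\begin{equation*}
\RF_M(\nu|\zeta) = \sumsum_{x,x'\in\X} M(x,x')\,\nu(x)\Bigl[ \frac{f(x')}{f(x)} - \log f(x')\Bigr].
\end{equation*}
The key step is then to add zero in a useful way: since $M$ is a generator, $\sum_{x'\in\X} M(x,x')=0$, so for any quantity $g(x)$ depending on $x$ alone one has $\sum_{x'} M(x,x')\,\nu(x)\,g(x)=0$. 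Choosing $g(x)=\log f(x)-1$ leaves the value of $\RF_M(\nu|\zeta)$ unchanged while completing the bracket into a difference of the two arguments, giving
\begin{equation*}
\RF_M(\nu|\zeta) = \sumsum_{x,x'\in\X} M(x,x')\,\nu(x)\Bigl[ \frac{f(x')}{f(x)} - \log\frac{f(x')}{f(x)} - 1\Bigr].
\end{equation*}

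To finish, I would set $\phi(u)\coloneqq u-\log u - 1$ and note that $\phi(u)\geq 0$ for all $u>0$, with $\phi(1)=0$ (this follows from convexity of $\phi$, or simply from $\log u\leq u-1$). The summand is $M(x,x')\,\nu(x)\,\phi\bigl(f(x')/f(x)\bigr)$, which I would split into diagonal and off-diagonal contributions. The diagonal terms $x=x'$ vanish because $\phi(1)=0$. For $x\neq x'$ the off-diagonal entries satisfy $M(x,x')\geq 0$, while $\nu(x)>0$ since $\nu\in\P_+(\X)$ and $\phi(f(x')/f(x))\geq 0$; hence every term is non-negative and so is the sum, yielding $\RF_M(\nu|\zeta)\geq 0$.

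The main obstacle is locating the correct ``add zero'' trick: it is the row-sum property $\sum_{x'}M(x,x')=0$ that lets one symmetrise the integrand into the form $\phi(f(x')/f(x))$, after which positivity is immediate. Once that observation is made, the remaining arguments (the substitution and the scalar inequality) are routine, and one should take care only to check that the diagonal terms do not spoil the sign, which they do not since they vanish identically.
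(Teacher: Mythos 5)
Your proposal is correct and follows essentially the same route as the paper: both rewrite $\RF_M(\nu|\zeta)$ using the row-sum property $\sum_{x'}M(x,x')=0$ into the form $\sumsum M(x,x')\,\nu(x)\,\phi(f(x')/f(x))$ with $\phi(u)=u-1-\log u\geq 0$ (the paper's~\eqref{def:FI-alter}, since $\zeta(x)\ell(x)=\nu(x)$). Your explicit remark that the diagonal terms vanish because $\phi(1)=0$ is a point the paper leaves implicit, and is a welcome bit of extra care.
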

\begin{proof}
Note that for any $\nu,\zeta\in\P_+(\X)$ the Fisher information is a well-defined object. Since $M$ is a generator it satisfies $\sum_{x'\in\X} M(x,x')=0$, and therefore we can rewrite the Fisher information as 
\begin{equation}\label{def:FI-alter}
\RF_{M}(\nu|\zeta) = \sumsum_{x,x'\in\X} M(x,x')\zeta(x) \ell(x) \Bigl[\frac{\ell(x')}{\ell(x)} - 1 - \log\Bigl( \frac{\ell(x')}{\ell(x)}\Bigr) \Bigr], \ \ \ell=\frac{\nu}{\zeta}.
\end{equation}
The result then follows since for $\alpha\in(0,\infty]$, $\alpha\mapsto \alpha-1-\log\alpha$ is a non-negative function. 
\end{proof}
A straightforward calculations shows that $\partial_t\RelEnt(\mu_t|\stat) = -\RF_{L}(\mu_t|\stat)$ (see~\cite[Lemma 2.3]{bobkovTetali06}), where $\mu_t$ solves~\eqref{eq:ForKol} and $\stat$ is the corresponding stationary measure, and therefore
for any $t\geq 0$ we have
\begin{equation}\label{eq:ent-decay}
\RelEnt(\mu_t|\stat)  + \int_0^T \RF_L(\mu_t|\stat)dt= \RelEnt(\mu_0|\stat),
\end{equation}
i.e.\ the relative entropy with respect to the stationary measure is a Lyapunov function for~\eqref{eq:ForKol} since the Fisher information is non-negative. 

Next we introduce the log-Sobolev inequality which will play a key role in our main results. A probability measure $\zeta\in\P(\X)$ and a matrix $M\in\R^{|\X|\times|\X|}$ satisfy the \emph{log-Sobolev inequality} with constant $\alpha_{\LSI} > 0$ if 
\begin{equation}\label{def:LSI}
\forall \nu\in \P(\X): \ \ \RelEnt(\nu|\zeta)\leq  \frac{1}{\alpha_{\LSI}} \RF_M(\nu|\zeta).
\end{equation}
Note, that in particular if $|\X| = 1$ then \eqref{def:LSI} is trivially satisfied for all $\alpha_{\LSI} > 0$.

\begin{rem}\label{rem:LSI}
We point out that \eqref{def:LSI} is also referred to as \emph{modified} log-Sobolev inequality in the literature, see e.g.~\cite{bobkovTetali06,erbarMaas12,zhang16}.
However, the log-Sobolev inequality defined in \cite{bobkovTetali06,zhang16} implies \eqref{def:LSI} and therefore, assuming \eqref{def:LSI} is a weaker assumption. 
In particular, both versions have been considered in the reversible and non-reversible setting and both quantify the convergence to equilibrium, see e.g.~\cite[Theorem 2.4]{bobkovTetali06}. It should be noted that the log-Sobolev inequality holds for any irreducible Markov chain (and its invariant measure) on a finite state space without assuming reversibility~\cite[Theorem~2.2.3]{Saloff-Coste97}.
In this article we use \eqref{def:LSI} since it fits naturally in our setting. 

In fact, assuming that the measure $\zeta\in\P(\X)$ and generator $M$, with $M^T\zeta=0$, satisfy the log-Sobolev inequality~\eqref{def:LSI} is equivalent to assuming that any solution to the forward Kolmogorov equation $\partial_t\nu=M^T\nu$ converges exponentially to $\zeta$ in the following sense. Repeating the calculations above, we find $\partial_t\RelEnt(\nu_t|\zeta) = -\RF_{M}(\nu_t|\zeta)$ and therefore~\eqref{def:LSI} implies the exponential convergence
\begin{equation*}
\forall t\geq 0: \  \RelEnt(\nu_t|\zeta) \leq e^{-\alpha_{\LSI}t}\RelEnt(\nu_0|\zeta).
\end{equation*}
Conversely, if this convergence holds for any initial data $\nu_0\in\P(\X)$ we can write
\begin{equation*}
\forall t>0: \ \frac{1}{t} \bigl[\RelEnt(\nu_t|\zeta) - \RelEnt(\nu_0|\zeta) \bigr] \leq \RelEnt(\nu_0|\zeta)\Bigl( \frac{e^{-\alpha_{\LSI}t}-1}{t}\Bigr).
\end{equation*}
Consequently, passing to the limit $t\rightarrow 0$ and using $\partial_t\RelEnt(\nu_t|\zeta)\bigl|_{t=0} = -\RF_{M}(\nu_0|\eta)$, we arrive at the log-Sobolev inequality~\eqref{def:LSI}. 
\end{rem}

The preliminaries presented above (and the definitions for the coarse-grained and effective dynamics in the next section) can also be written in matrix form since we are dealing with finite state space $\X$. In the following remark we briefly summarise these ideas (see~\cite{MielkeStephan20} for details). 
\begin{rem}\label{rem:MatrixNotation}
Given any probability measure $\nu\in \P(\X)=\{\nu\in\R^{|\X|}:\nu_i\geq 0, \sum_i\nu_i=1\}$ and coarse-graining map $\xi:\X\rightarrow\Y$, the marginal measure satisfies
\begin{equation*}
\hat\nu = \xi_\#\nu \ \Longleftrightarrow \ \hat\nu = M \nu 
\end{equation*}
where $\hat\nu\in \P(\Y)=\{\zeta\in\R^{|\Y|}: \zeta_j\geq 0, \sum_j\zeta_j=1\}$ and $M\in\R^{|\Y|\times|\X|}$ is a matrix with $M_{ji}\in\{0,1\}$ and $\sum_{j=1}^{|\Y|} M_{ji}=1$. The level sets $\Lambda_y$ are sometimes referred to as \emph{clusters}. The conditional measure $\nu(\cdot|\cdot)$ can be expressed using the so-called \emph{reconstruction operator} $R^\nu\in \mathbb R^{|\X|\times|\Y|}$, defined as
\begin{equation*}
R^\nu_{ij} = \frac{\nu_{i}}{(M\nu)_{j}},
\end{equation*}
with the relation $\nu(.\vert j) = R_{.j}^\nu$.
\end{rem}

\subsection{Coarse-grained and effective dynamics}\label{subsec:CG-eff}

In this section we define the central dynamics studied in this article and state certain useful properties. 
The following lemma characterises the evolution of $\xi_\#\mu_t$ on $\Y$, which we call the coarse-grained dynamics.  
\begin{lem}\label{lem:CG}
Let $\mu\in C^1([0,T];\P(\X))$ be a solution of~\eqref{eq:ForKol}. Then $\cg\in C^1([0,T];\P(\Y))$ defined via $ \cg_t\coloneqq\xi_\#\mu_t$ evolves according to 
\begin{equation}\label{eq:cg}
\begin{aligned}
\partial_t\cg_t=\hat L_t^T\cg_t,\\
\cg|_{t=0}=\xi_\#\mu_0,
\end{aligned}
\end{equation}
where $\hat L:[0,T]\rightarrow\R^{|\Y|\times|\Y|}$ is given by
\begin{equation}\label{def:cg-gen}
\hat L_t(y_1,y_2)\coloneqq \sumsum_{x_1\in\Lambda_{y_1},x_2\in\Lambda_{y_2} } L(x_1,x_2)\mu_t(x_1|y_1).
\end{equation}
Furthermore for any $t$, $\hat L_t$ is a generator, i.e.\ $\hat L_t(y_1,y_2)\geq 0$ for any $y_1\neq y_2$ and $\sum_{y_2} \hat L_t(y_1,y_2)=0$. 
\end{lem}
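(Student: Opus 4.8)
The plan is to verify the claim by direct computation, differentiating the definition $\cg_t(y)=\sum_{x\in\Lambda_y}\mu_t(x)$ in time and substituting the forward Kolmogorov equation~\eqref{eq:ForKol}. Since $\mu\in C^1([0,T];\P(\X))$ and each $\cg_t(y)$ is a finite linear combination of the coordinates of $\mu_t$, the regularity $\cg\in C^1([0,T];\P(\Y))$ is immediate, and $\cg_t\in\P(\Y)$ holds because the pushforward of a probability measure under $\xi$ is again a probability measure. Note also that $\hat L_t$ is well-defined precisely where the conditional measures $\mu_t(\cdot|y)$ exist, i.e.\ on $\{y:\cg_t(y)>0\}$; on this set all the manipulations below are legitimate.

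For the evolution equation, I would first write
\[
\partial_t\cg_t(y)=\sum_{x\in\Lambda_y}\partial_t\mu_t(x)=\sum_{x\in\Lambda_y}\sum_{x'\in\X}L(x',x)\,\mu_t(x'),
\]
using $(L^T\mu_t)(x)=\sum_{x'\in\X}L(x',x)\mu_t(x')$. On the other side, expanding $(\hat L_t^T\cg_t)(y)=\sum_{y'\in\Y}\hat L_t(y',y)\cg_t(y')$ and inserting the definition~\eqref{def:cg-gen} of $\hat L_t$, the crucial identity is the factorization $\mu_t(x_1|y')\,\cg_t(y')=\mu_t(x_1)$ for $x_1\in\Lambda_{y'}$, which turns the product $\hat L_t(y',y)\cg_t(y')$ into $\sum_{x_1\in\Lambda_{y'},\,x_2\in\Lambda_y}L(x_1,x_2)\mu_t(x_1)$. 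Summing over $y'\in\Y$ and using that the level sets $\{\Lambda_{y'}\}_{y'\in\Y}$ partition $\X$, the double sum over $y'$ and $x_1\in\Lambda_{y'}$ collapses to a single sum over all $x_1\in\X$, which after relabelling matches the expression for $\partial_t\cg_t(y)$ above.

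It remains to check that $\hat L_t$ is a generator. For the off-diagonal nonnegativity, if $y_1\neq y_2$ then $\Lambda_{y_1}$ and $\Lambda_{y_2}$ are disjoint, so every pair $(x_1,x_2)$ in the sum defining $\hat L_t(y_1,y_2)$ satisfies $x_1\neq x_2$; since $L(x_1,x_2)\geq 0$ for $x_1\neq x_2$ and $\mu_t(x_1|y_1)\geq 0$, each summand is nonnegative. For the vanishing row sums, I would pull the factor $\mu_t(x_1|y_1)$ out of the sum over $y_2$ and again use the partition property to rewrite $\sum_{y_2\in\Y}\sum_{x_2\in\Lambda_{y_2}}L(x_1,x_2)=\sum_{x_2\in\X}L(x_1,x_2)$, which vanishes because $L$ is a generator; hence $\sum_{y_2}\hat L_t(y_1,y_2)=0$.

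The argument is essentially bookkeeping, so I do not expect a genuine obstacle; the only point requiring care is the repeated use of the two structural facts — the factorization $\mu_t(x)=\mu_t(x|y)\cg_t(y)$ and the partition of $\X$ into level sets — to interchange and collapse the nested sums correctly, together with noting (as remarked above) that $\hat L_t$ is defined only where $\cg_t$ is positive.
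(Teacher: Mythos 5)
Your proposal is correct and follows essentially the same route as the paper's proof: both hinge on the factorization $\mu_t(x)=\mu_t(x|y)\,\cg_t(y)$ and on the level sets partitioning $\X$ to reorganize the sums, and the generator properties are verified identically. The only cosmetic difference is direction — the paper rewrites $\partial_t\cg_t(y)$ until it becomes $(\hat L_t^T\cg_t)(y)$, whereas you expand $(\hat L_t^T\cg_t)(y)$ and collapse it back to $\partial_t\cg_t(y)$ — which is the same computation read in reverse.
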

\begin{proof}
For any $y_1 \in \Y$ we find
\begin{align*}
\partial_t \cg_t(y_1) &= \sum_{x_1 \in \Lambda_{y_1}} \partial_t \mu_t(x_1) 
= \sumsum_{x_1 \in \Lambda_{y_1},x_2 \in \X} L(x_2,x_1) \mu_t(x_2) 
= \sumsum_{x_1 \in \Lambda_{y_1},x_2 \in \X} L(x_2,x_1) \hat{\mu}_t(\xi(x_2)) \mu_t(x_2 \vert \xi(x_2)) \\
&= \sum_{y_2 \in \Y} \hat{\mu}_t(y_2) \sumsum_{x_1\in\Lambda_{y_1},x_2\in\Lambda_{y_2} } L(x_2,x_1)\mu_t(x_2 \vert y_2) 
= \sum_{y_2 \in \Y} \hat{L}_t(y_2,y_1) \hat{\mu}_t(y_2),
\end{align*}
where we use the explicit expression~\eqref{def:marg-cond} for the marginal and conditional measure. By construction, for any $t\geq 0$ and $y_1\neq y_2$, $\hat L_t(y_1,y_2)\geq 0$. Furthermore for any $y_1\in\Y$,
\begin{equation*}
\sum_{y_2\in\Y} \hat L_t(y_1,y_2) = \sum_{x_2\in\Lambda_{y_1}}  \Bigl(\sumsum_{y_2\in\Y, x_2\in\Lambda_{y_2}} L(x_1,x_2) \Bigr) \mu_t(x_1|y_1) = \sum_{x_2\in\Lambda_{y_1}}  \Bigl(\sum_{x_2\in\X} L(x_1,x_2) \Bigr) \mu_t(x_1|y_1) =0,  
\end{equation*}
where the final equality follows since $L$ is a generator. 
\end{proof}
We will make use of the so-called \emph{effective dynamics}, which describes the evolution of $\eff\in C^1([0,T];\P(\Y))$
\begin{equation}\label{eq:eff}
\begin{aligned}
\partial_t\eff=N^T\eff,\\
\eff|_{t=0}=\eff_0, 
\end{aligned}
\end{equation}
where $N\in \R^{|\Y|\times|\Y|}$ is defined as 
\begin{equation}\label{def:eff-gen}
N(y_1,y_2)\coloneqq \sumsum_{x_1\in\Lambda_{y_1},x_2\in\Lambda_{y_2} } L(x_1,x_2)\stat(x_1|y_1).
\end{equation}
Recall that $\stat\in\P(\X)$ is the stationary measure for the original dynamics~\eqref{eq:ForKol} and for any $y\in\Y$, $\rho(\cdot|y)$ is the corresponding conditional stationary measure. Note that the effective dynamics is linear.

The following result states that the effective generator $N$ is irreducible.
Consequently, using standard results (see for instance~\cite[Appendix C]{HilderPeletierSharmaTse20}) the effective dynamics is a positive probability measure, i.e.\ $\eff_t\in\P_+(\Y)$ for any $t>0$. 

\begin{lem}\label{lem:eff-irred}
The effective generator $N$ defined in~\eqref{def:cg-gen} is irreducible. Furthermore, for any $t>0$, the solutions to~\eqref{eq:cg},~\eqref{eq:eff} are strictly positive, i.e.\ $\cg_t,\eff_t\in\P_+(\Y)$.
\end{lem}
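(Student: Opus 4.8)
The plan is to prove the two assertions in order: first the irreducibility of the effective generator $N$, and then the strict positivity of the solutions $\cg_t$ and $\eff_t$ for $t>0$. For irreducibility of $N$, I would argue that $N$ inherits enough connectivity from the original irreducible generator $L$. The key observation is that for $y_1 \neq y_2$, the off-diagonal entry
\begin{equation*}
N(y_1,y_2) = \sumsum_{x_1\in\Lambda_{y_1},x_2\in\Lambda_{y_2}} L(x_1,x_2)\stat(x_1|y_1)
\end{equation*}
is a sum of non-negative terms, so $N(y_1,y_2)>0$ as soon as there exist $x_1\in\Lambda_{y_1}$ and $x_2\in\Lambda_{y_2}$ with $L(x_1,x_2)>0$ and $\stat(x_1|y_1)>0$. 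Since $\stat\in\P_+(\X)$ is strictly positive, every conditional weight $\stat(x_1|y_1)$ is strictly positive, so $N(y_1,y_2)>0$ exactly when there is at least one $L$-edge from the level set $\Lambda_{y_1}$ to the level set $\Lambda_{y_2}$.

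The main step is then to show that the $\Y$-graph with edges $\{(y_1,y_2): N(y_1,y_2)>0\}$ is strongly connected. I would deduce this from the irreducibility of $L$: given any two states $y_1,y_2\in\Y$, pick any $x_1\in\Lambda_{y_1}$ and $x_2\in\Lambda_{y_2}$; by irreducibility of $L$ there is a directed $L$-path $x_1 = z_0, z_1, \dots, z_k = x_2$ with $L(z_{i},z_{i+1})>0$ for each $i$. Projecting this path through $\xi$ yields a sequence $\xi(z_0),\dots,\xi(z_k)$ in $\Y$ starting at $y_1$ and ending at $y_2$; whenever two consecutive projected states differ, the corresponding $L$-edge witnesses $N(\xi(z_i),\xi(z_{i+1}))>0$, and after deleting the repeated (self-mapped) states we obtain a genuine directed $N$-path from $y_1$ to $y_2$. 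This establishes strong connectivity, hence irreducibility of $N$. I expect the only subtlety here to be the bookkeeping when several consecutive $z_i$ lie in the same level set, which is handled simply by collapsing such runs; this is the step I would be most careful to write cleanly, though it is not genuinely hard.

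For the strict positivity claim, I would invoke the standard result already cited in the excerpt (the reference to~\cite[Appendix C]{HilderPeletierSharmaTse20}): the forward Kolmogorov evolution driven by an irreducible generator maps $\P(\Y)$ into $\P_+(\Y)$ for every $t>0$, regardless of the initial datum. Applying this to the effective dynamics~\eqref{eq:eff} with the time-independent irreducible generator $N$ gives $\eff_t\in\P_+(\Y)$ for $t>0$ immediately. For the coarse-grained dynamics~\eqref{eq:cg}, the generator $\hat L_t$ is time-dependent, so I cannot quote the autonomous result verbatim; instead I would argue directly that $\cg_t=\xi_\#\mu_t$ and that $\mu_t\in\P_+(\X)$ for $t>0$ by the same positivity result applied to the original irreducible $L$ in~\eqref{eq:ForKol}. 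Since $\cg_t(y)=\sum_{x\in\Lambda_y}\mu_t(x)$ is a sum of strictly positive terms over the nonempty level set $\Lambda_y$, we get $\cg_t(y)>0$ for every $y\in\Y$, i.e.\ $\cg_t\in\P_+(\Y)$. This reduces everything to the positivity of $\mu_t$ itself, which follows from the cited theory since $L$ is irreducible and $\X$ finite.
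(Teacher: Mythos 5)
Your proposal is correct and follows essentially the same route as the paper: irreducibility of $N$ is obtained by lifting an $L$-path between the level sets, projecting it through $\xi$, collapsing consecutive repeats, and using the strict positivity of $\stat$ to lower-bound the resulting $N$-entries; positivity of $\eff_t$ then follows from the standard result for irreducible autonomous generators, and positivity of $\cg_t$ from $\cg_t=\xi_\#\mu_t$ with $\mu_t\in\P_+(\X)$. No substantive differences.
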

\begin{proof}
The positivity of the solution $\mu_t$ to the coarse-grained dynamics is guaranteed by the positivity of $\mu_t$ (since $L$ is irreducible), the definition of the push-forward measure~\eqref{def:marg-cond} and since $\Lambda_y$ is non-empty for every $y\in\Y$.  

We will now show that the effective generator $N$ is irreducible, which by standard results  (see for instance~\cite[Appendix C]{HilderPeletierSharmaTse20}) ensures that $\eff_t\in\P_+(\Y)$ for any $t>0$.
Fix $y_1,y_2\in\Y$. 
Since $L$ is irreducible, for any $x_1\in\Lambda_{y_1}, x_2\in\Lambda_{y_2}$, there exists a finite sequence $(\tilde x_\alpha)_{\alpha=0,\ldots,n}\in\X$ with $\tilde x_0=x_1, \tilde x_n=x_2$ such that $L(\tilde{x}_\alpha,\tilde{x}_{\alpha+1})>0$ for any $\alpha\in\{0,\ldots, n-1\}$ (see~\cite[Theorem 3.2.1]{Norris98}). 
Consider the sequence $(\tilde y_\alpha)_{\alpha=0,\ldots,m}\in\Y$ defined via $\tilde y_\alpha=\xi(\tilde x_\alpha)$ such that any doubled points are removed and therefore $m\leq n$. Clearly $\tilde y_0=y_1$ and $\tilde y_m=y_2$. Therefore using the definition of $N$ and $\alpha\in\{0,\ldots, m-1\}$ we find
\begin{equation*}
N(\tilde y_\alpha,\tilde y_{\alpha+1}) = \sumsum_{\hat{x}_1\in\Lambda_{\tilde y_\alpha},\hat{x}_2\in\Lambda_{\tilde{y}_{\alpha+1}} } L(\hat{x}_1,\hat{x}_2)\stat(\hat{x}_1|\tilde{y}_{\alpha}) \geq L(\tilde x_\alpha,\tilde x_{\alpha+1})\stat(\tilde x_\alpha|\tilde y_\alpha)>0,
\end{equation*}
where $\tilde x_\alpha\in \Lambda_{\tilde y_\alpha},\tilde x_{\alpha+1}\in \Lambda_{\tilde y_{\alpha+1}}$ by construction of $(\tilde y_{\alpha})$ and $L(\tilde x_\alpha,\tilde x_{\alpha+1})>0$ since $\tilde x_\alpha\neq \tilde x_{\alpha+1}$. Furthermore  we have used $\stat(x|y)>0$ for any $(y,x)\in(\Y,\Lambda_y)$, which follows since $\stat>0$ (since $L$ is irerducible)  and using the decomposition~\eqref{def:marg-cond}. Since $y_1,y_2\in\Y$ are arbitrary, it follows that the effective generator $N$ is irreducible. 
\end{proof}
Note that a similar proof as above can be used to show that the coarse-grained generator $\hat L_t$ is irreducible for any $t>0$. However this need not imply the positivity of $\cg_t$ since the generator depends on time.

Since the effective generator $N$ is irreducible and $\Y$ is finite (which implies that $N$ is positive recurrent), the effective dynamics admits a stationary measure. The 
following result states that the stationary measure for the effective dynamics is $\xi_\#\stat$, where $\stat\in\P(\X)$ is the stationary measure of the original dynamics~\eqref{eq:ForKol}.  Furthermore, the coarse-grained dynamics and the effective dynamics coincide in the long-time limit, i.e.\ as $t\rightarrow\infty$, which is to be expected by the construction of these systems.
\begin{prop}\label{prop:eff-inv}
Let $\cg_t$ $\eff_t$ solve~\eqref{eq:cg},\eqref{eq:eff} respectively. 
The effective dynamics~\eqref{eq:eff} admits $\xi_\#\stat\in\P(\Y)$ as a stationary measure. Furthermore, 
\begin{equation*}
\|\cg_t-\eff_t\|_{\TV}\xrightarrow{t\rightarrow\infty} 0.
\end{equation*}
\end{prop}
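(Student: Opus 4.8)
The plan is to prove the two assertions separately: first that $\xi_\#\stat$ is a stationary measure for the effective generator $N$, and then that both $\cg_t$ and $\eff_t$ relax to this common measure as $t\to\infty$, whence the total-variation claim follows from the triangle inequality.

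For the stationarity, I would verify directly that $N^T(\xi_\#\stat)=0$ by computing, for fixed $y_2\in\Y$,
\[
\sum_{y_1\in\Y} N(y_1,y_2)\,\xi_\#\stat(y_1) = \sum_{y_1\in\Y}\ \sumsum_{x_1\in\Lambda_{y_1},x_2\in\Lambda_{y_2}} L(x_1,x_2)\,\stat(x_1|y_1)\,\xi_\#\stat(y_1).
\]
The key simplification is the identity $\stat(x_1|y_1)\,\xi_\#\stat(y_1)=\stat(x_1)$, valid for $x_1\in\Lambda_{y_1}$ by~\eqref{def:marg-cond}; after substituting it, the outer sum over $y_1$ together with the inner sum over $x_1\in\Lambda_{y_1}$ collapses into a single sum over all of $\X$. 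This leaves $\sum_{x_2\in\Lambda_{y_2}}\big(\sum_{x_1\in\X}L(x_1,x_2)\stat(x_1)\big) = \sum_{x_2\in\Lambda_{y_2}}(L^T\stat)(x_2)=0$, using stationarity of $\stat$ for the full dynamics. Since $y_2$ was arbitrary, $\xi_\#\stat$ is stationary for $N$.

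For the long-time limit, I would argue via a common limit. On the one hand, $L$ is irreducible on the finite state space $\X$, so the full dynamics converge to their unique stationary measure, $\mu_t\to\stat$ as $t\to\infty$ (standard convergence of finite irreducible chains, see~\cite{Norris98}); since $\xi$ is a fixed map on a finite set, the push-forward is continuous and hence $\cg_t=\xi_\#\mu_t\to\xi_\#\stat$ in total variation. On the other hand, $N$ is irreducible by Lemma~\ref{lem:eff-irred}, with stationary measure $\xi_\#\stat$ just established, so by the same convergence result $\eff_t\to\xi_\#\stat$. The triangle inequality $\|\cg_t-\eff_t\|_{\TV}\le\|\cg_t-\xi_\#\stat\|_{\TV}+\|\xi_\#\stat-\eff_t\|_{\TV}$ then yields the claim.

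The argument is essentially routine, and the only point requiring care is that the coarse-grained dynamics solve the \emph{non-autonomous} equation~\eqref{eq:cg} with the time-dependent generator $\hat L_t$; one cannot directly invoke convergence of $\cg_t$ to ``the stationary measure of $\hat L$'', since no such single object exists. The correct route is to establish convergence at the level of the full \emph{autonomous} dynamics, $\mu_t\to\stat$, and transport it through the push-forward, which is precisely what makes $\xi_\#\stat$ the natural common limit of the two evolutions.
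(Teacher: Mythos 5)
Your proposal is correct and follows essentially the same route as the paper: the identical computation $\stat(x_1|y_1)\,\xi_\#\stat(y_1)=\stat(x_1)$ collapsing the sum to $(L^T\stat)(x_2)=0$ for stationarity, and then convergence of $\cg_t=\xi_\#\mu_t$ and of $\eff_t$ (via irreducibility of $N$) to the common limit $\xi_\#\stat$ followed by the triangle inequality. Your closing remark about why one must argue through the autonomous full dynamics rather than the non-autonomous coarse-grained equation is a correct and worthwhile clarification of a step the paper leaves implicit.
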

\begin{proof}
Using the definition of $N$ and the definition of the conditional, marginal stationary measure~\eqref{def:marg-cond}, for any $y\in\Y$ we find
\begin{align*}
(N^T\xi_\#\stat)(y_2)&=\sum_{y_1\in\Y} N(y_1,y_2)(\xi_\#\stat)(y_1) = \sum_{y_1\in\Y} \Bigl(\sumsum_{x_1\in\Lambda_{y_1},x_2\in\Lambda_{y_2} } L(x_1,x_2)\stat(x_1|y_1) (\xi_\#\stat)(y_1) \Bigr)\\
&=\sum_{x_2\in\Lambda_{y_2}} \Bigl(\sum_{x_1\in\X} L(x_1,x_2)\stat(x_1)  \Bigr) = 0,
\end{align*}
where the final equality follows since $(L^T\stat)(x)=0$ for any $x\in\X$.
Since $\cg_t=\xi_\#\mu_t$ and $\mu_t\rightarrow\stat$ as $t\rightarrow\infty$ it follows that 
\begin{equation*}
\|\cg_t-\xi_\#\stat\|_{\TV} =  \frac12\sum_{y\in\Y} |\cg_t(y)-\xi_\#\stat(y)| \leq \frac12 \sum_{y\in\Y} \sum_{x\in\Lambda_y} |\mu_t(x)-\stat(x)| \xrightarrow{t\rightarrow\infty} 0,
\end{equation*}
where the equality is a standard property of the total-variation metric, the inequality follows from the definition of the marginal measure and the limit follows since $\X,\Y$ are finite. The final result then follows by using the triangle inequality.
\end{proof}

\section{General error estimates}\label{sec:generalEstimate}
We now state our main result, which provides error estimates comparing the coarse-grained dynamics~\eqref{eq:cg} and the effective dynamics~\eqref{eq:eff} in relative entropy. By using the CKP inequality~\eqref{def:CKP}, this result also provides estimates in the total-variation distance.

\begin{theorem}\label{thm:RelEntEst} 
For any $y\in\Y$, let $L^y\in\R^{|\Lambda_y|\times|\Lambda_y|}$ be the restriction of $L$ to $\Lambda_y \times \Lambda_y$. Let $\cg,\eff\in C^1([0,\infty);\P(\Y))$ be the solutions to the coarse-grained dynamics~\eqref{eq:cg} and the effective dynamics~\eqref{eq:eff} respectively. Assume that the conditional stationary measure $\stat(\cdot|y)$ and $L^y$ satisfies the log-Sobolev inequality uniformly in  $y\in\Y$ with constant $\alpha_{\LSI}$ (recall~\eqref{def:LSI}), i.e.\  
\begin{equation}
    \exists\, \alpha_{\LSI}>0, \ \forall y\in\Y, \ \forall \nu\in \P(\Lambda_y): \ \ \RelEnt\bigl(\nu|\rho(\cdot|y)\bigr)\leq  \frac{1}{\alpha_{\LSI}} \RF_{L^y}\bigl(\nu|\rho(\cdot|y)\bigr),
    \label{eq:log_Sobolev_mainThm}
\end{equation}
where $\RF_{L^y}(\cdot|\cdot)$ is the $L^y$-Fisher information (see~\eqref{def:FI}).

Then, for any $t > 0$ the estimate
\begin{equation}\label{eq:RelEntEst}
\RelEnt(\cg_t|\eff_t) \leq \RelEnt(\cg_0|\eff_0) + \frac{C}{\sqrt{\alpha_{\LSI}}} \bigl[ \RelEnt(\mu_0|\rho)-\RelEnt(\mu_t|\rho) \bigr]^\frac12,
\end{equation}
holds, where $C=C(L,N,\stat)$ is independent of $t>0$.

Additionally, for any fixed $T < \infty$ there exists a constant $\tilde{C} = \tilde{C}(L,|\X|,T)$ independent of the effective generator $N$ and the coarse-graining map $\xi$, such that the estimate
\begin{equation}\label{eq:RelEntEstFiniteTime}
\RelEnt(\cg_t|\eff_t) \leq \RelEnt(\cg_0|\eff_0) + \frac{\tilde{C}}{\sqrt{\alpha_{\LSI}}} \bigl[ \RelEnt(\mu_0|\rho)-\RelEnt(\mu_t|\rho) \bigr]^\frac12,
\end{equation}
holds for any $t \in [0,T]$.
\end{theorem}

\begin{rem}
    Under the additional assumption of strictly positive initial datum, i.e.~there exists a constant $c_0 > 0$ such that $\cg_0(y), \eta_0(y) > c_0$ for all $y \in \Y$, the scaling of the error estimate \eqref{eq:RelEntEstFiniteTime} in $\alpha_{\LSI}$ can be improved to arrive at
    \begin{align*}
        \sup_{t \in [0,T]} \RelEnt(\cg_t \vert \eta_t) \leq 2 \RelEnt(\cg_0 \vert \eta_0) + \dfrac{\tilde{C}}{\alpha_{\LSI}} \left[\RelEnt(\mu_0 \vert \rho) - \RelEnt(\mu_T \vert \rho)\right],
    \end{align*}
    where the constant $\tilde{C}$ now additionally depends on $c_0$. For details we refer to Theorem \ref{thm:eps-est-positive-init} and its proof, in particular estimates \eqref{eq:g_t_integral_bound_eps} and \eqref{eq:asym-first-est}, as well as Remark \ref{rem:differences-eps-thm-noneps-thm}. However it is unclear if such an improvement can also be made in the case $T = \infty$.
\end{rem}

We point out that since the constant $\tilde{C}$ in \eqref{eq:RelEntEstFiniteTime} is independent of the effective generator $N$ and the coarse-graining map $\xi$, the only part of the constant in the error estimate \eqref{eq:RelEntEstFiniteTime} that is dependent on the choice of $\xi$ is the log-Sobolev constant $\alpha_{\LSI}$. Theorem~\ref{thm:RelEntEst} thereby lends itself to two important observations. First, the quality of the coarse-graining map is characterised via the log-Sobolev constant. Second, while we have a general error estimate for any coarse-graining map, the effective dynamics will be closer to the coarse-grained dynamics for the choices of maps for which the log-Sobolev constant is large. This is clearly seen in the simple toy-problem setting discussed in Section~\ref{sec:choice_CG_map} where certain choices are clearly more reasonable than others. 
Note that the constant $\tilde{C}$ depends on $T$ and cannot be extended to $T = \infty$ with the current tools as discussed in Remark~\ref{rem:improvedConstant}.

\begin{rem}\label{rem:diagonal_elems_restriction}
    Since the Fisher information $\RF_{L^y}$ does not depend on the diagonal elements of $L^y$ (see \eqref{def:FI-alter}) we may modify the diagonal elements of $L^y$ such that the rows of $L^y$ sum up to zero thereby making it a generator.
\end{rem}

\begin{rem}\label{rem:LSI-singleton}
   To obtain a uniform $\alpha_{\LSI}$ in \eqref{eq:log_Sobolev_mainThm} it is sufficient to establish a log-Sobolev estimate
    \begin{align*}
        \RelEnt(\nu | \rho(\cdot | y)) \leq \dfrac{1}{\alpha_{\LSI}(y)} \RF_{L^y}(\nu | \rho(\cdot | y)), \quad \nu \in \P(\Lambda_y)
    \end{align*}
    for every fixed $y$ with a $y$-dependent constant $\alpha_{\LSI}(y)$. Then \eqref{eq:log_Sobolev_mainThm} is obtained by taking $\alpha_{\LSI} \coloneqq \inf_{y \in \Y} \alpha_{\LSI}(y)$. Here the infinimum is always positive since $\Y$ is a finite state-space.
    
     As pointed out below \eqref{def:LSI} if $|\Lambda_y| = 1$ for some $y \in \Y$ the inequality holds for all $\alpha_{\LSI}(y)$. Therefore, it is reasonable to take the infimum above only over $y$ such that the corresponding level set $\Lambda_y$ contains more than one element, i.e.\ 
     \begin{align*}
         \alpha_{\LSI} = \inf \{\alpha_{\LSI}(y) \,:\, y \in \Y, |\Lambda_y| > 1\}.
     \end{align*}
     Note that we may always assume that there exists at least one $\Lambda_y$ with more than one element since the only case where all level sets only have one element is the case $\X = \Y$.
     In this case the effective dynamics and coarse-grained dynamics are the same and thus \eqref{eq:RelEntEst} holds trivially since the relative entropy $\RelEnt(\mu_t | \nu_t)$ decays in time if the curves $\mu$ and $\nu$ are produced by the same generator, see \cite[Equation (4)]{HilderPeletierSharmaTse20}.
\end{rem}

Using~\eqref{eq:ent-decay} which implies that the relative entropy with respect to $\stat$ is a Lyapunov function along the solution of~\eqref{eq:ForKol}, and assuming that $\cg_0=\eff_0$, the relative-entropy estimate~\eqref{eq:RelEntEst} can be simplified to 
\begin{equation}\label{eq:Unif-time-est}
\RelEnt(\cg_t|\eff_t) \leq \frac{C}{\sqrt{\alpha_{\LSI}}} \bigl(\RelEnt(\mu_0|\rho)\bigr)^{\frac12},
\end{equation}
which is a \emph{uniform} time estimate since $C$ does not depend on $t$.

\begin{rem}\label{rem:LSI-gen-est}
The main assumption in Theorem \ref{thm:RelEntEst} is that the conditional stationary measure satisfies the log-Sobolev inequality. This assumption has a clear interpretation in the reversible setting where $(L^y)^T\stat(\cdot|y)=0$ (see Section~\ref{sec:eps-rev} with $\e=1$) --  following Remark~\ref{rem:LSI} any dynamics driven by $L^y$ converges exponentially fast to the conditional stationary measure $\stat(\cdot|y)$. In this sense, the log-Sobolev constant corresponds to the speed of equilibration on the level sets $\Lambda_y$, that is the larger $\alpha_{\LSI}$ the faster the conditional measure $\mu_t(\cdot \vert y)$ converges to the conditional stationary measure $\rho(\cdot \vert y)$ as $t \rightarrow \infty$.

However, this interpretation is restricted to the reversible setting, since in general the conditional stationary measure need not be the stationary measure for $L^y$, i.e. $(L^y)^T\stat(\cdot|y)\neq 0$. Therefore, the log-Sobolev inequality is a technical assumption in the non-reversible setting. Similar issues and observations pertaining to assuming that the conditional stationary measure satisfies the log-Sobolev inequality also arise in the context of diffusion processes~\cite[Remark 2.5]{HartmannNeureitherSharma20}. For additional discussion see Section~\ref{sec:discussion}. 
\end{rem}

While we expect that the error between coarse-grained and effective dynamics should vanish as $t\rightarrow\infty$ (recall Proposition~\ref{prop:eff-inv}), Theorem~\ref{thm:RelEntEst} and~\eqref{eq:Unif-time-est} only provide a uniform-in-time upper bound for the error in relative entropy. Using the exponential convergence of the full dynamics~\eqref{eq:ForKol} to the stationary measure we can improve the long-time behaviour of the error estimate. 

\begin{cor}\label{corr:long-time} Under the assumptions in Theorem~\ref{thm:RelEntEst}, for any $t\geq 0$ we have
\begin{equation*}
\|\cg_t-\eff_t\|_{\TV} \leq \min\bigl\{ C_1(t), C_2e^{-Dt}\bigr\},
\end{equation*}
where $D,C_2>0$ are independent of $t$ and for a constant $C>0$ independent of $t$
\begin{equation*}
C_1(t)\coloneqq\Bigl(2\RelEnt(\cg_0|\eff_0) + \frac{C}{\sqrt{\alpha_{\LSI}}} \bigl[ \RelEnt(\mu_0|\rho)-\RelEnt(\mu_t|\rho) \bigr]^\frac12  \Bigr)^{\frac12} .
\end{equation*}
\end{cor}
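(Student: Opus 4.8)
The plan is to establish the two competing bounds separately and then take their minimum. The bound by $C_1(t)$ is essentially already contained in Theorem~\ref{thm:RelEntEst}: applying the CKP inequality~\eqref{def:CKP} to the pair $\cg_t,\eff_t\in\P_+(\Y)$ gives $\|\cg_t-\eff_t\|_{\TV}^2\leq 2\RelEnt(\cg_t|\eff_t)$, and substituting the relative-entropy estimate~\eqref{eq:RelEntEst} together with the identity $2c\sqrt{2/\alpha_{\LSI}}=c\sqrt{8/\alpha_{\LSI}}$ yields $\|\cg_t-\eff_t\|_{\TV}\leq C_1(t)$ directly.

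For the exponential bound I would exploit that, by Proposition~\ref{prop:eff-inv}, both dynamics share the common stationary measure $\xi_\#\stat$ and relax to it exponentially fast. First, since $L$ is irreducible on the finite state space $\X$, the solution $\mu_t$ of~\eqref{eq:ForKol} converges to $\stat$ at some exponential rate in total variation, say $\|\mu_t-\stat\|_{\TV}\leq A_1 e^{-\lambda_1 t}$ with $\lambda_1>0$; this is the classical spectral-gap estimate for finite irreducible generators, and can alternatively be read off from Remark~\ref{rem:LSI} by using any admissible log-Sobolev constant for the pair $(L,\stat)$ followed by CKP. Because the push-forward contracts the total-variation norm (exactly the inequality used in the proof of Proposition~\ref{prop:eff-inv}), we obtain $\|\cg_t-\xi_\#\stat\|_{\TV}\leq\|\mu_t-\stat\|_{\TV}\leq A_1 e^{-\lambda_1 t}$. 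Second, since the effective generator $N$ is irreducible (Lemma~\ref{lem:eff-irred}) with stationary measure $\xi_\#\stat$ (Proposition~\ref{prop:eff-inv}), the same argument applied to~\eqref{eq:eff} gives $\|\eff_t-\xi_\#\stat\|_{\TV}\leq A_2 e^{-\lambda_2 t}$ for some $\lambda_2>0$.

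Combining these two estimates via the triangle inequality,
\begin{equation*}
\|\cg_t-\eff_t\|_{\TV}\leq\|\cg_t-\xi_\#\stat\|_{\TV}+\|\eff_t-\xi_\#\stat\|_{\TV}\leq A_1 e^{-\lambda_1 t}+A_2 e^{-\lambda_2 t}\leq C_2 e^{-ct},
\end{equation*}
with $c\coloneqq\min\{\lambda_1,\lambda_2\}$ and $C_2\coloneqq A_1+A_2$, both independent of $t$. Taking the minimum of this estimate with the bound $\|\cg_t-\eff_t\|_{\TV}\leq C_1(t)$ established above finishes the proof.

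I do not expect a genuine obstacle here, as both ingredients are off-the-shelf once Theorem~\ref{thm:RelEntEst} is in hand. The only point requiring mild care is justifying the strictly positive exponential rates $\lambda_1,\lambda_2$ in the possibly non-reversible regime — but this is precisely the classical exponential ergodicity of irreducible finite-state chains (equivalently, the existence of some, not necessarily sharp, log-Sobolev constant as discussed in Remark~\ref{rem:LSI}). Consequently the constants $C_2$ and $c$ are not explicit in terms of $\alpha_{\LSI}$ but merely finite and positive, which is all the statement requires.
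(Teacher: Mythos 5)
Your proposal is correct and follows essentially the same route as the paper: the bound $C_1(t)$ comes from Theorem~\ref{thm:RelEntEst} combined with the CKP inequality, and the exponential bound comes from the exponential ergodicity of the irreducible generators $L$ and $N$ (Proposition~\ref{lem:forKol-exp} in the appendix), the contraction of total variation under push-forward, and the triangle inequality through the common stationary measure $\xi_\#\stat$. No substantive differences.
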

\begin{proof}
Using Theorem~\ref{thm:RelEntEst} and the CKP inequality it follows that 
\begin{equation}\label{eq:long-time-1}
\|\cg_t-\eff_t\|_{\TV} \leq C_1(t).
\end{equation}
Since the original and effective generators are irreducible (recall Proposition~\ref{lem:eff-irred}), by Proposition~\ref{lem:forKol-exp} there exist constants $C(\mu_0), C(\eff_0),D_\mu,D_\eta>0$ independent of time such that 
\begin{equation*}
\|\cg_t-\xi_\#\stat\|_{\TV} \leq \|\mu_t-\stat\|_{\TV} \leq C(\mu_0)e^{-D_\mu t}, \ \ \|\eff_t-\xi_\#\stat\|_{\TV} \leq C(\eff_0)e^{-D_\eta t}. 
\end{equation*}
Therefore using the triangle inequality we find
\begin{equation}\label{eq:long-time-2}
\|\cg_t-\eff_t\|_{\TV} \leq \max\{C(\mu_0),C(\eff_0)\} e^{-t\min\{D_\mu,D_\eta\} } =: C_2e^{-Dt}.
\end{equation} 
The final result then follows by combining~\eqref{eq:long-time-1} and~\eqref{eq:long-time-2}. 
\end{proof}
Although the exponential bound $C_2e^{-Dt}$ in Corollary~\ref{corr:long-time} provides a decaying estimate in time, it should be smaller than $C_1(t)$ only at extremely long-times. This follows since $C_2e^{-Dt}$ encodes the convergence of the full system (including the `slow' coarse-grained variable) while $C_1$ only encodes the behaviour of the `fast' level-set dynamics characterised by the log-Sobolev constant. 

To prove Theorem~\ref{thm:RelEntEst} we will make use of the following result which provides an explicit characterisation of the error between $\cg_t,\eff_t$ in relative entropy. 

\begin{lem}\label{lem:pre-FIR}
The solutions $\cg,\eff\in C^1([0,\infty);\P(\Y))$ to~\eqref{eq:cg},~\eqref{eq:eff} satisfy 
\begin{equation}\label{eq:ent-bound1}
\RelEnt(\cg_t|\eff_t) - \RelEnt(\cg_0|\eff_0) + \int_0^t \RF_N(\cg_s|\eff_s) \,ds = \int_0^t\sum_{y\in\Y} \log\Bigl( \frac{\cg_s(y)}{\eff_s(y)}\Bigr) \bigl( \partial_t\cg_s  - N^T\cg_s\bigr)(y) \,ds,  
\end{equation}
for any $t > 0$.
Furthermore, for any $t>0$ the term on the right hand side of~\eqref{eq:ent-bound1} satisfies the upper bound 
\begin{equation}\label{eq:Log-est}
\sum_{y\in\Y} \log\Bigl( \frac{\cg_t(y)}{\eff_t(y)}\Bigr) \bigl( \partial_t\cg_t  - N\cg_t\bigr)(y) \leq 2 g_t \sum_{y\in \Y} \|\mu_t(\cdot|y) - \stat(\cdot|y) \|_{\TV}\, \cg_t(y),
\end{equation}
where $\|\cdot\|_{\TV}$ is the total-variation norm and $g:\mathbb R_{\geq 0} \rightarrow\mathbb R$  is given by
\begin{equation}\label{def:C_t}
g_t\coloneqq \sup_{x\in\X} f_t(x), \ \ \ f_t(x_1)\coloneqq\sum_{x_2\in\X} L(x_1,x_2) \Bigl[ \log\Bigl( \frac{\cg_t(\xi(x_1))}{\eff_t(\xi(x_1))}\Bigr) -  \log\Bigl( \frac{\cg_t(\xi(x_2))}{\eff_t(\xi(x_2))}\Bigr)  \Bigr].
\end{equation}
\end{lem}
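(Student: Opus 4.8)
The plan is to prove the two assertions in turn: the exact entropy identity \eqref{eq:ent-bound1}, and then the pointwise estimate \eqref{eq:Log-est} for its right-hand side. The first is a direct entropy-dissipation computation. I would differentiate $t\mapsto\RelEnt(\cg_t|\eff_t)=\sum_{y\in\Y}\cg_t(y)\log\frac{\cg_t(y)}{\eff_t(y)}$ in time: differentiating the prefactor gives $\sum_y\partial_t\cg_t(y)\log\frac{\cg_t(y)}{\eff_t(y)}$, while differentiating the logarithm produces $\sum_y\cg_t(y)\bigl(\tfrac{\partial_t\cg_t(y)}{\cg_t(y)}-\tfrac{\partial_t\eff_t(y)}{\eff_t(y)}\bigr)$. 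In the latter the $\partial_t\cg_t$-contribution vanishes because $\sum_y\partial_t\cg_t(y)=0$ (conservation of mass), and inserting $\partial_t\eff_t=N^T\eff_t$ from \eqref{eq:eff} leaves $-\sum_y\cg_t(y)\frac{(N^T\eff_t)(y)}{\eff_t(y)}$. The key algebraic step is to recognise, using the alternative form \eqref{def:FI-alter} of the Fisher information together with the zero-row-sum property $\sum_{y'}N(y,y')=0$ (a discrete summation by parts), that the collected terms reorganise into $-\RF_N(\cg_t|\eff_t)+\sum_y\log\frac{\cg_t(y)}{\eff_t(y)}(\partial_t\cg_t-N^T\cg_t)(y)$. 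Integrating over $[0,t]$ then yields \eqref{eq:ent-bound1}. The only subtlety here is bookkeeping: $\cg_t$ solves \eqref{eq:cg} and not \eqref{eq:eff}, and this mismatch is exactly what produces the nonzero remainder.

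For the estimate \eqref{eq:Log-est}, the starting point is $\partial_t\cg_t-N^T\cg_t=(\hat L_t-N)^T\cg_t$, where by Lemma~\ref{lem:CG} and \eqref{def:eff-gen} the generator difference is $(\hat L_t-N)(y_1,y_2)=\sumsum_{x_1\in\Lambda_{y_1},x_2\in\Lambda_{y_2}}L(x_1,x_2)\bigl[\mu_t(x_1|y_1)-\stat(x_1|y_1)\bigr]$. Abbreviating $\phi(y):=\log\frac{\cg_t(y)}{\eff_t(y)}$, the left-hand side of \eqref{eq:Log-est} equals $\sum_{y_1,y_2}\phi(y_2)\cg_t(y_1)(\hat L_t-N)(y_1,y_2)$. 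I would first use that $\hat L_t-N$ is a difference of generators, hence has vanishing row sums, to replace $\phi(y_2)$ by $\phi(y_2)-\phi(y_1)$; then, substituting the explicit form above and collapsing the inner sum over $x_2\in\X$ (again invoking $\sum_{x_2}L(x_1,x_2)=0$), the coefficient multiplying $\mu_t(x_1|y_1)-\stat(x_1|y_1)$ becomes precisely $-f_t(x_1)$ with $f_t$ as in \eqref{def:C_t}. This ``discrete integration by parts'' — first on $\Y$, then lifting to $\X$ by collapsing the $x_2$-sum — is the crucial manoeuvre: it transfers the computation to the fine space and isolates the single-site quantity $f_t$, leaving the expression $-\sum_{y_1}\cg_t(y_1)\sum_{x_1\in\Lambda_{y_1}}f_t(x_1)\bigl[\mu_t(x_1|y_1)-\stat(x_1|y_1)\bigr]$.

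It then remains to estimate this expression. On each level set $\Lambda_{y_1}$ the measures $\mu_t(\cdot|y_1)$ and $\stat(\cdot|y_1)$ are probabilities, so their difference has zero total mass; this allows subtracting a constant from $f_t$ and applying a H\"older-type bound, which produces the factor $\|\mu_t(\cdot|y_1)-\stat(\cdot|y_1)\|_{\TV}$ together with a supremum of $f_t$. The constant $2$ and the one-sided quantity $g_t=\sup_x f_t(x)$ then arise when converting this supremum, using that $f_t$ has zero mean against $\stat$ — indeed, since $f_t(x)=-\sum_{x'}L(x,x')\phi(\xi(x'))$ one has $\sum_{x}\stat(x)f_t(x)=-\sumsum_{x,x'\in\X}\stat(x)L(x,x')\phi(\xi(x'))=0$ because $L^T\stat=0$ — so that $g_t\ge 0$ controls the oscillation of $f_t$ up to the factor $2$. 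Summing over $y_1$ with weights $\cg_t(y_1)$ gives \eqref{eq:Log-est}.

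I expect the main obstacle to be the second step: carrying out the two successive cancellations correctly — the $\Y$-level summation by parts followed by the collapse of the $x_2$-sum over all of $\X$ — so that the remainder depends on the conditional measures only through $\mu_t(\cdot|y)-\stat(\cdot|y)$ and on $\phi$ only through the single-site quantity $f_t$. Once $f_t$ has been isolated, the concluding total-variation estimate and the extraction of the constant $g_t$ are comparatively routine.
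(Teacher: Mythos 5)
Your argument for the identity \eqref{eq:ent-bound1} is exactly the paper's: differentiate $\RelEnt(\cg_t|\eff_t)$, use $\sum_y\partial_t\cg_t(y)=0$ and $\partial_t\eff=N^T\eff$, recognise the Fisher information, and integrate. The estimate \eqref{eq:Log-est} also follows the paper's route step for step: write $\partial_t\cg_t-N^T\cg_t=(\hat L_t-N)^T\cg_t$, use the vanishing row sums to replace $\phi(y_2)$ by $\phi(y_2)-\phi(y_1)$, insert the definitions \eqref{def:cg-gen} and \eqref{def:eff-gen} so that only the difference $\mu_t(\cdot|y_1)-\stat(\cdot|y_1)$ survives, and identify the coefficient as $-f_t(x_1)$ by collapsing the $x_2$-sum.

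The one place where you diverge is the extraction of the constant $2g_t$, and there your justification does not go through. You argue that $\sum_x\stat(x)f_t(x)=0$ (which is correct) and conclude that $g_t=\sup_x f_t(x)$ "controls the oscillation of $f_t$ up to the factor $2$". Zero mean against the positive measure $\stat$ only gives $\sup_x f_t(x)\geq 0\geq\inf_x f_t(x)$; it does not give $-\inf_x f_t(x)\leq C\sup_x f_t(x)$ for any universal $C$ (take $f_t$ large and negative at a single point of small $\stat$-mass and slightly positive elsewhere), so the one-sided supremum cannot bound the oscillation. In the paper the factor $2$ has nothing to do with oscillation: it comes purely from the total-variation normalisation $\sum_{x\in\Lambda_y}|\mu_t(x|y)-\stat(x|y)|=2\|\mu_t(\cdot|y)-\stat(\cdot|y)\|_{\TV}$, and the coefficient is simply bounded by $g_t$ (in effect a two-sided supremum of $f_t$, which is what the subsequent bound \eqref{eq:C-init-est} in Lemma~\ref{lem:bound-g} actually controls). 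Replacing your oscillation argument by the direct bound $|{-}f_t(x_1)\,\alpha(x_1)|\leq\bigl(\sup_x|f_t(x)|\bigr)|\alpha(x_1)|$ together with the TV normalisation repairs the step and recovers the stated estimate.
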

\begin{proof}
Differentiating $\RelEnt(\cg_t|\eff_t)$ in time we find
\begin{align*}
\partial_t\RelEnt(\cg_t|\eff_t)dt &= \sum_{y\in \Y} \Bigl[ \partial_t \cg_t(y) \log\Bigl(\frac{\cg_t(y)}{\eff_t(y)} \Bigr)  - \frac{\cg_t(y)}{\eff_t(y)} \partial_t\eta_t(y) - \partial_t\hat\mu_t (y)\Bigr] \\
&= \sum_{y\in \Y}  \log\Bigl(\frac{\cg_t(y)}{\eff_t(y)} \Bigr) \bigl( \partial_t \cg_t - N^T\cg_t \bigr)(y) - \sum_{y\in \Y} \Bigl[  \frac{\cg_t(y)}{\eff_t(y)} \partial_t\eta_t(y) - \log\Bigl( \frac{\cg_t(y)}{\eff_t(y)}\Bigr) (N^T\cg_t)(y) \Bigr] \\
& = \sum_{y\in \Y}  \log\Bigl(\frac{\cg_t(y)}{\eff_t(y)} \Bigr) \bigl( \partial_t \cg_t - N^T\cg_t \bigr)(y) - \RF_N(\cg_t|\eff_t),
\end{align*}
where we have used $\sum_y \partial_t\cg_t(y)=0$ since $\cg_t\in\P(\Y)$ to arrive at the second equality, and the definition of the Fisher information~\eqref{def:FI} to arrive at the final equality. Equation~\eqref{eq:ent-bound1} then follows by integrating this equality in time over $[0,t]$. 

Using the evolution of the coarse-grained dynamics~\eqref{eq:cg} we find
\begin{align*}
\sum_{y_1\in \Y}&  \log\Bigl(\frac{\cg_t(y_1)}{\eff_t(y_1)} \Bigr) \bigl( \partial_t \cg_t - N^T\cg_t \bigr)(y_1) = \sumsum_{y_1,y_2\in \Y}   \bigl(\hat L_t(y_1,y_2) - N(y_1,y_2) \bigr)\log\Bigl(\frac{\cg_t(y_2)}{\eff_t(y_2)} \Bigr) \hat\mu_t(y_1) \\
&= \sumsum_{y_1,y_2\in \Y}   \bigl(\hat L_t(y_1,y_2) - N(y_1,y_2) \bigr)\Bigl[ \log\Bigl(\frac{\cg_t(y_2)}{\eff_t(y_2)}\Bigr)-\log\Bigl(\frac{\cg_t(y_1)}{\eff_t(y_1)} \Bigr) \Bigr] \hat\mu_t(y_1)\\
& = \sumsum_{y_1,y_2\in \Y} \sumsum_{x_1\in\Lambda_{y_1},x_2\in\Lambda_{y_2}}   L(x_1,x_2)   \bigl(\mu_t(x_1|y_1) - \stat(x_1|y_1) \bigr)\Bigl[ \log\Bigl(\frac{\cg_t(y_2)}{\eff_t(y_2)}\Bigr)-\log\Bigl(\frac{\cg_t(y_1)}{\eff_t(y_1)} \Bigr) \Bigr] \hat\mu_t(y_1) \\
&\leq g_t \sum_{y_1\in\Y} \hat\mu_t(y_1) \sum_{x_1\in\Lambda_{y_1}} \bigl|\mu_t(x_1|y_1) - \rho(x_1|y_1)  \bigr| = 2g_t \sum_{y_1\in\Y} \big\|\mu_t(\cdot|y_1)- \stat(\cdot|y_1) \big\|_{\TV} \cg_t(y_1),
\end{align*}
where $g_t$ is defined in~\eqref{def:C_t}. Here the second equality follows since $\sum_{y_2} \hat L_t(y_1,y_2) = \sum_{y_2} N(y_1,y_2) =0$, the third equality follows from~\eqref{def:cg-gen},~\eqref{def:eff-gen}. This concludes the proof of~\eqref{eq:Log-est}.
\end{proof}

The following result provides an upper bound on the time integral of the error term $g_t$ derived in the result above. 
\begin{lem}\label{lem:bound-g}
For any $t>0$, $t\mapsto g_t$ defined in~\eqref{def:C_t} satisfies $\|g\|_{L^2([0,t])}  \leq C $, 
where $C$ is independent of $t$.
Additionally, for any $T < \infty$ exists a $\tilde{C}$, which is independent of $N$ and $\xi$, such that $\|g\|_{L^2([0,t])} \leq \tilde{C}$ for all $t \in [0,T]$.
\end{lem}
\begin{proof}
We have the bound 
\begin{equation}\label{eq:C-init-est}
g_t \leq 2\|L\|_\infty \sup_{x\in\X} \bigl| \log \cg_t(\xi(x)) - \log \eff_t(\xi(x))  \bigr|
\end{equation}
In what follows we prove the claimed result by providing upper bounds on $\int_0^\delta g_s^2ds$ and $\int_\delta^t g_s^2ds$ separately, for some fixed $\delta\in(0,1)$. We use this splitting since we estimate the short and long-time behaviour of $g_t$ using different approaches. 

We first estimate $\int_0^\delta g_s^2 ds$. Using~\eqref{eq:C-init-est} we find
\begin{equation}\label{eq:short-time}
\int_0^\delta g^2_s \,ds \leq 8\|L\|^2_{\infty} \sup_{x\in\X} \Bigl[ \int_0^\delta |\log\cg_s(\xi(x))|^2\,ds  + \int_0^\delta |\log\eff_s(\xi(x))|^2\,ds  \Bigr].
\end{equation}
Since $\eta_t\in\mathcal P(\Y)$ solves~\eqref{eq:eff} with irreducible generator $N$, by Proposition~\ref{lem:forKol-low} there exists $c_1>0$ and $n_1\in\mathbb N$ independent of $x\in\X$ such that 
\begin{equation*}
\forall s\in [0,\delta], \ \forall x\in\X: \ \eff_s(\xi(x))\geq c_1s^{n_1}.  
\end{equation*}
We can also obtain modified constants $\tilde{c}_1, \tilde{n}_1$ independent of $N$ by applying Proposition~\ref{prop:lowerBounds_eff_dyn} instead of Proposition \ref{lem:forKol-low}.
Since $\eta_s(\xi(x)) \leq 1$ and $\theta\mapsto|\log\theta|$ is increasing as $\theta\rightarrow 0$, using the lower bound above it follows that
\begin{equation*}
|\log\eta_s(\xi(x))| \leq |\log c_1| + n_1|\log s|, 
\end{equation*}
and therefore we have the estimate
\begin{equation*}
\int_0^\delta |\log\eff_s(\xi(x))|^2 \,ds \leq \delta |\log c_1|^2 + n_1^2\int_0^\delta |\log s|^2ds  = \delta |\log c_1|^2 +n_1^2 \delta\bigl[ (\log\delta)^2-2\log\delta + 2\bigr].
\end{equation*}
Since $\mu$ solves~\eqref{eq:ForKol} with irreducible generator $L$, again by using Proposition~\ref{lem:forKol-low}, there exists $c_2>0$ and $n_2\in\mathbb N$ independent of $x\in\X$ such that 
\begin{equation*}
\forall s\in [0,\delta], \ \forall x\in\X: \ \mu_s(x)\geq c_2s^{n_2},
\end{equation*}
and therefore for any $x\in\X$ and  we have 
\begin{equation*}
\cg_s(\xi(x)) = \sum_{x'\in\Lambda_{\xi(x)}} \mu_s(x') \geq \mu_s(x') \geq c_2 s^{n_2}.
\end{equation*}
Repeating the same arguments as above we arrive at the bound
\begin{equation*}
\int_0^\delta |\log\cg_s(\xi(x))|^2 \,ds \leq  \delta |\log c_2|^2 +N_2^2\delta\bigl[ (\log\delta)^2-2\log\delta + 2\bigr].
\end{equation*}
Substituting these bounds back into~\eqref{eq:short-time} we find
$\int_0^\delta g_s^2 \,ds \leq C(c_1,c_2,n_1,n_2,\delta)$.
Similarly, using the modified constants $\tilde{c}_1$ and $\tilde{n}_1$ we obtain a constant $\tilde{C}$, which is independent of $N$, such that $\int_0^\delta g_s^2 \,ds \leq \tilde{C}$.

Next we estimate $\int_\delta^t g_s^2 ds$. Once more using~\eqref{eq:C-init-est} we find
\begin{equation}\label{eq:long-time}
\int_\delta^t g^2_s \,ds \leq 8\|L\|^2_{\infty} \sup_{x\in\X} \Bigl[ \int_\delta^t |\log\cg_s(\xi(x))-\log\xi_\#\stat(\xi(x)) |^2\,ds  + \int_\delta^t |\log\eff_s(\xi(x))-\log\xi_\#\stat(\xi(x))|^2\,ds  \Bigr].
\end{equation}
We first provide an estimate for the first term in the right hand side of~\eqref{eq:long-time}. Since $\stat\in\P_+(\X)$ and $\X$ is finite, using $c_{\stat}\coloneqq\min_{x\in\X} \stat(x) \in(0,1)$ we find
\begin{equation*}
\forall x\in\X: \ \xi_{\#}\stat(\xi(x)) = \sum_{x\in\Lambda_{\xi(x)}} \stat(x) \geq |\Lambda_{\xi(x)}| c_\stat \geq c_{\stat}.
\end{equation*}
Since $L$ is an irreducible generator, using Proposition~\ref{lem:forKol-exp}, there exists $\tau>\delta$ such that for any $s\geq\tau$ we have $\|\mu_s - \rho\|_{\TV} \leq \frac{1}{2} c_{\stat}$, and consequently for any $x\in\X$, $s\geq \tau$ and $x\in\X$ we have the lower bound
\begin{equation*}
\mu_s(x) \geq \frac12 c_\stat \ \Rightarrow \ \cg_s(\xi(x)) \geq \frac12 c_\stat.
\end{equation*}
Finally, using Proposition~\ref{lem:forKol-low} there exists $d\geq 0$ such that $\cg_s(\xi(x)) \geq d$ for any $s\in [\delta,\tau]$. In summary, for any $x\in\X$ and $s\in[\delta,\infty)$, there exists $\hat c\in (0,1)$ defined as $\hat c\coloneqq\min(c_\rho,c_\stat/2, d)$ such that $\cg_s(\xi(x)),\xi_\#\stat(\xi(x))\in [\hat{c},1]$. Since $\theta\mapsto \log\theta$ is Lipschitz continuous on $[\hat c,1]$ with constant $\hat c^{-1}$, for any $x\in\X$ and $s\in [\delta,\infty)$ we find
\begin{align*}
\bigl|\log\bigl(\cg_s(\xi(x))\bigr)-\log\bigl(\xi_\#\stat(\xi(x))\bigr) \bigr| &\leq \hat c^{-1} \bigl|\cg_s(\xi(x))-\xi_\#\stat(\xi(x)) \bigr|  \leq \hat c^{-1} \sum_{x\in \Lambda_{\xi(x)}} \bigl|\mu_s(x)-\stat(x) \bigr| \\
&\leq 2 \hat c^{-1} \|\mu_s -\stat\|_{\TV} \leq 2C e^{-\lambda s},
\end{align*}
for some $C,\lambda>0$, where we use Proposition~\ref{lem:forKol-exp} to arrive at the final inequality. Therefore, the first term in the right hand side of~\eqref{eq:long-time} admits the upper bound
\begin{equation*}
\sup_{x\in\X} \int_\delta^t |\log\cg_s(\xi(x))-\log\xi_\#\stat(\xi(x)) |^2\,ds \leq 4C^2 \int_{\delta}^\infty e^{-2\lambda s} \,ds \leq C,
\end{equation*}
where the constant on the right hand side of the final inequality is independent of $t>0$. Since the the effective dynamics $\eta_t$ admits $\xi_\#\stat$ as a stationary measure (see Proposition~\ref{prop:eff-inv}) and its generator is irreducible (see Lemma~\ref{lem:eff-irred}), we can repeat the same arguments as above to arrive at a time-independent upper bound for the second integral on the right-hand side of~\eqref{eq:long-time}, i.e.\  
\begin{equation*}
\sup_{x\in\X} \int_\delta^t |\log\eta_s(\xi(x))-\log\xi_\#\stat(\xi(x)) |^2\,ds \leq C.
\end{equation*}
Substituting these upper bounds back into~\eqref{eq:long-time} we arrive at the claimed result. 

To obtain a constant independent of $N$ and $\xi$ we fix $T < \infty$. Then Proposition \ref{prop:lowerBounds_eff_dyn} gives a constant $\tilde{d}(\delta,T)$ such that $\eta_s(\xi(x)) \geq \tilde{d}$. In particular, $\tilde{d}$ is independent of $N$ and $\xi$. Additionally, since $\xi_{\#}\rho(\xi(x)) \geq \min_{x \in \X} \rho(x) > 0$ we can bound
\begin{align*}
    \sup_{x \in \X} \int_\delta^t \left\vert \log(\eta_s(\xi(x)) - \log\xi_{\#}\rho(\xi(x))\right\vert^2 \,ds &\leq \sup_{x \in \X} \int_\delta^t |\log\eta_s(\xi(x))|^2 \,ds + \sup_{x \in \X} \int_\delta^t |\log\xi_{\#}\rho(\xi(x))|^2 \,ds \\
    &\leq \left(|\log(\tilde{d})|^2 + \max_{x \in \X} |\log(\rho(x))|^2\right) (T-\delta)
\end{align*}
for all $t \in [\delta,T]$. The final bound is independent of $N$ and $\xi$, which together with the modified short-time bound proves the second part of the lemma.
\end{proof}

Using the previous two auxiliary results we can now prove Theorem~\ref{thm:RelEntEst}.
\begin{proof}[Proof of Theorem~\ref{thm:RelEntEst}]
Using Lemma~\ref{lem:pre-FIR},~\ref{lem:bound-g}, applying Cauchy-Schwarz inequality in time and Jensen's inequality on $\Y$ we find
\begin{equation}\label{eq:main-auxeq1}
\RelEnt(\cg_t|\eff_t) \leq \RelEnt(\cg_0|\eff_0) + 2 \|g\|_{L^2(0,\infty)} \Bigl( \int_0^t \sum_{y\in\Y} \| \mu_s(\cdot|y)-\stat(\cdot|y)\|_{\TV}^2 \cg_s(y)\,ds  \Bigr)^{\frac12}, 
\end{equation}
where we have dropped the Fisher information term in~\eqref{eq:ent-bound1} as it is non-negative (see Lemma~\ref{lem:pos-FI}). 

Using the CKP inequality~\eqref{def:CKP} and the log-Sobolev inequality~\eqref{def:LSI}, the term inside the summation on the right hand side can be bounded by
\begin{equation*}
\| \mu_t(\cdot|y)-\stat(\cdot|y)\|_{\TV}^2 \leq 2\RelEnt\bigl(\mu_t(\cdot|y)\big|\stat(\cdot|y)\bigr)  \leq  \frac{2}{\alpha_{\LSI}} \RF_{L^y}\bigl(\mu_t(\cdot|y)\big|\stat(\cdot|y)\bigr),
\end{equation*}
and therefore we arrive at the bound 
\begin{align*}
&\sum_{y\in\Y} \| \mu_t(\cdot|y)-\stat(\cdot|y)\|_{\TV}^2 \cg_t(y) \leq \frac{2}{\alpha_{\LSI}} \sum_{y\in\Y}\RF_{L^y}\bigl(\mu_t(\cdot|y)\big|\stat(\cdot|y)\bigr) \cg_t(y)\\
&= \frac{2}{\alpha_{\LSI}} \sumsum_{y\in\Y,x_1,x_2\in \Lambda_{y}} L^y(x_1,x_2) \mu_t(x_1|y) \Bigl[ \frac{\mu_t(x_2|y) \stat(x_1|y)}{\stat(x_2|y) \mu_t(x_1|y)} - 1 - \log \Bigl(\frac{\mu_t(x_2|y) \stat(x_1|y)}{\stat(x_2|y) \mu_t(x_1|y)} \Bigr)  \Bigr]\cg_t(y)\\
&= \frac{2}{\alpha_{\LSI}} \sumsum_{y\in\Y,x_1,x_2\in \Lambda_{y}} L^y(x_1,x_2) \mu_t(x_1) \Bigl[ \frac{\mu_t(x_2) \stat(x_1)}{\stat(x_2) \mu_t(x_1)} - 1 - \log \Bigl(\frac{\mu_t(x_2) \stat(x_1)}{\stat(x_2) \mu_t(x_1)} \Bigr)  \Bigr]\\
&\leq \frac{2}{\alpha_{\LSI}} \RF_L(\mu_t|\stat).
\end{align*}
Here the first equality follows by using the alternate formulation~\eqref{def:FI-alter} of the Fisher information and the second equality follows since for any $y\in\Y$ and $x\in\Lambda_y$ we have $\stat(x)=\stat(x|y)\xi_\#\stat(y)$, $\mu_t(x)=\mu_t(x|y)\cg_t(y)$. The final inequality follows since $L^y$ is the restriction of $L$ to $\Lambda_y$ by definition and $\theta\mapsto \theta-1-\log\theta$ is non-negative for any $\theta> 0$. 

Substituting back into~\eqref{eq:main-auxeq1} and using~\eqref{eq:ent-decay} we find
\begin{align*}
\RelEnt(\cg_t|\eff_t) &\leq \RelEnt(\cg_0|\eff_0) + 2 \|g\|_{L^2(0,\infty)} \sqrt{\frac{2}{\alpha_{\LSI}}} \Bigl( \int_0^t  \RF(\mu_s|\stat)\,ds\Bigr)^{\frac12} \\
&\leq \RelEnt(\cg_0|\eff_0) + 2 \|g\|_{L^2(0,\infty)} \sqrt{\frac{2}{\alpha_{\LSI}}} \bigl[ \RelEnt(\mu_0|\rho)-\RelEnt(\mu_t|\rho) \bigr]^\frac12,
\end{align*}
which is the required result. 

Finally, for a fixed $T < \infty$ we can replace \eqref{eq:main-auxeq1} by
\begin{align*}
    \RelEnt(\cg_t|\eff_t) \leq \RelEnt(\cg_0|\eff_0) + 2 \|g\|_{L^2(0,T)} \Bigl( \int_0^t \sum_{y\in\Y} \| \mu_s(\cdot|y)-\stat(\cdot|y)\|_{\TV}^2 \cg_s(y)\,ds  \Bigr)^{\frac12}
\end{align*}
and then use the alternative upper bound on the $L^2$-norm of $g$ provided by Lemma \ref{lem:bound-g}, which provides an upper bound independent of $N$ and $\xi$. Proceeding then as above yields the alternative error estimate \eqref{eq:RelEntEstFiniteTime} for $t \in [0,T]$.
\end{proof}

\begin{rem}\label{rem:improvedConstant}
We finally outline the challenges in extending the estimate \eqref{eq:RelEntEstFiniteTime} to the case $T = \infty$.
The key part is to further improve the bound on $\|g\|_{L^2(0,T)}$. To do this we require a bound on the the decay rate of $\|\eta_t - \xi_\# \rho\|_{\TV}$, which does not depend on the effective generator $N$ and the coarse-graining map $\xi$. Revisiting the proof of Proposition \ref{lem:forKol-exp} this could for example be achieved by bounding the spectral gap of $N$ from below by a constant only depending on the full generator $L$. However, this remains an open question.
\end{rem}

\section{Estimates with explicit scale-separation}\label{sec:scale-sep}

In the last section we considered error estimates comparing the coarse-grained and effective dynamics for a given coarse-graining map. In this section we discuss the asymptotic behaviour of these estimates in the presence of explicit scale-separation. Specifically, we will focus on averaging problems for Markov chains (see for instance~\cite{PavliotisStuart08,lahbabiLegoll13,HilderPeletierSharmaTse20}). In Section~\ref{sec:eps-setup} we introduce these averaging problems and discuss the behaviour of the effective dynamics in the limit of infinite scale-separation. In Section~\ref{sec:eps-fixed} we present quantitative error estimates between the coarse-grained and effective dynamics and in Section~\ref{sec:eps-rev} we focus on the specific setting of reversible Markov chains. Finally, in Section \ref{sec:choice_CG_map} we discuss the scaling of the log-Sobolev constant $\alpha_{\LSI}$ with respect to $\varepsilon$ for different choices of $\xi$ in a simple example.

\subsection{Averaging problems for Markov chains and effective dynamics}\label{sec:eps-setup}
\begin{figure}[h!]
\begin{center}
    \begin{tikzpicture}[scale=1.3]
    \draw [domain=-2.5:-0.5, samples=100] plot (\x, {cos(pi*\x r)*cos(pi*\x r)});
    \draw [domain=0.5:2.5, samples=100] plot (\x, {cos(pi*\x r)*cos(pi*\x r)});
    \draw [domain=-3:-2.5, samples=50] plot(\x, {3*cos(pi*\x r)*cos(pi*\x r)});
    \draw [domain=2.5:3, samples=50] plot(\x, {3*cos(pi*\x r)*cos(pi*\x r)});
    \draw [domain=-0.5:0.5, samples=50] plot(\x, {3*cos(pi*\x r)*cos(pi*\x r)});
    \draw (-3,-0.1) -- (-3,-0.3) (-3,-0.2)--(-1.5,-0.2) node[align=center,below=2pt]{macro-state}--(0,-0.2) (0,-0.1) -- (0,-0.3);
    \draw (1,-0.1) -- (1,-0.3) (1,-0.2)--(1.5,-0.2) node[align=center,below=2pt]{micro-state}--(2,-0.2) (2,-0.1) -- (2,-0.3);
    \end{tikzpicture}
    \caption{Energy landscape with two macro-states.}
    \label{fig:energyLandscape}
\end{center}
\end{figure}

Although multiscale Markov chains are fairly classical and arise in various contexts, we briefly motivate them from the perspective of kinetic Monte-Carlo methods in molecular dynamics. Consider a particle moving in a potential-energy landscape, which consists of small and large barriers as described in Figure~\ref{fig:energyLandscape}. The large energy barriers  introduce a natural scale-separation since it is harder for the particle to jump across them  compared to the smaller barriers. More precisely we can model the behaviour of such a particle as a Markov jump process on $\X = \Y\times \Z$ where $\Y$ corresponds to the states separated by the large energy barriers while $\Z$ is the part of the state space separated by small energy barriers. For simplicity, we assume that there is only one large barrier, i.e.\ $\Y = \{0,1\}$ and finitely many small barriers corresponding to each of these large barriers, i.e.\ $\Z = \{0, \dots, n-1\}$. This intuitively means that the state space is divided up into two \emph{macro-states}, each of 
which contain $n\in\N$ easily accessible \emph{micro-states}.  

To make these ideas concrete, consider a family of forward Kolmogorov equations parameterised by $\e>0$
\begin{equation}\label{eq:eps-KolEq}
\begin{dcases}
\partial_t \mu^\e = (L^\e)^T \mu^\e, \\
\mu^\e_{t = 0} = \mu_0,
\end{dcases}
\end{equation}
on $\X = \Y \times \Z$ with $\Y = \{0,1\}$ and $\Z = \{0, \dots, n-1\}$, generated by the family of generators ($Q_i,D_i, G_{i,1-i}\in\R^{|\Z|\times|Z|}$ for $i=0,1$)
\begin{equation}\label{eq:CG-res-gen}
L^\e= \frac{1}{\e}Q + \sG \coloneqq \frac{1}{\e}\begin{pmatrix}Q_0 & 0 \\ 0 & Q_1\end{pmatrix} +  \begin{pmatrix}D_0 & \sG_{0,1} \\
\sG_{1,0} & D_1 \end{pmatrix},
\end{equation}
i.e.\ with
\[
Q((y,z),(y',z')) = \begin{cases}
Q_y(z,z') &\text{if\, $y'=y$}\\
0 &\text{otherwise}
\end{cases},\qquad \sG((y,z),(y',z')) = \begin{cases}
\sG_{y,y'}(z,z') &\text{if\, $y'\ne y$}\\
D_y(z) & \text{if $y' = y$ and $z'=z$} \\ 
0 &\text{otherwise}
\end{cases},
\]
for $x=(y,z)$, $x'=(y',z')\in\X$ satisfying
\begin{equation}\label{CG-L-ass}
\forall x\in\X: \ \sum_{x'\in\X}Q(x,x')=0=\sum_{x'\in\X} \sG(x,x'),
\end{equation}
and diagonal matrix $D_y$, $y\in\Y$, which satisfies 
\begin{equation}\label{eq-def:D}
\forall z\in\Z: \ D_y(z)\coloneqq-\sum_{z'\in\Z} \sG_{y,1-y}(z,z').
\end{equation}
We assume that $L^\e$ is irreducible, and therefore~\eqref{eq:eps-KolEq} admits a stationary solution $\stat^\e\in\P_+(\X)$. Additionally we assume that $Q_0$ and $Q_1$ are irreducible generators as well. Consequently, the dynamics driven by $Q_i$ for $i=0,1$ admits a stationary measure $\stat_i \in\P_+(\Z)$.

Now let us take a closer look at each of these components. The small parameter $\varepsilon > 0$ models the scale-separation arising due to the difference in the heights of the barriers. The matrix $Q_y\in\R^{n\times n}$ encodes the jumps  between micro-states within the $y$-th macro-state. The matrix $G_{y,1-y}\in\R^{n\times n}$ encodes the transition from the $y$-th macro-state to $(1-y)$-th macro-state. The summability condition~\eqref{CG-L-ass} ensures that $L^\e$ is a generator.

This setting is of coarse-graining type, in that, for $0<\e\ll 1$ the dynamics in the macro-state equilibrates and the limit is a jump process on $\Y$. Consequently, the natural coarse-graining map in this setting is the projection onto the slow-variable, i.e.\  
\begin{equation*}
\xi:\X\rightarrow\Y \ \ \text{with} \ \ \xi (x) = y, \ \text{for any } x=(y,z)\in\X. 
\end{equation*}

\begin{rem}
Since  $\X = \Y \times \Z$ and $\xi$ is the projection onto the slow-variable, we simplify our notation by identifying the conditional measures on the level sets $\Lambda_y$ with probability measures on $\Z$.
In a slight abuse of notation, we will often denote both measures by the same symbol whenever the meaning is clear from the context.
For instance, we write $\rho^\e(z \vert y)$ instead of $\rho^\e(x \vert y)$ where $x=(y,z)$.	
\end{rem}

With the generator $L^\e$ and the coarse-graining map $\xi$, we can construct the corresponding coarse-grained and the effective dynamics. For $\e>0$, the coarse-grained dynamics $t\mapsto \cg_t \in\P(\Y)$ solves 
\begin{equation}\label{eq:cg-eps}
\partial_t \cg_t^\e = \bigl(\hat L_t^\e \bigr)^T\cg_t, \ \ \text{ with } \ \ \hat L^\e_t(y_1,y_2)\coloneqq \sumsum_{x_1\in\Lambda_{y_1},x_2\in\Lambda_{y_2} } L^\e(x_1,x_2)\mu_t^\e(x_1|y_1),
\end{equation}
and the effective dynamics $t\mapsto \eff_t \in\P(\Y)$ solves
\begin{equation}\label{eq:eff-eps}
\partial_t \eff_t^\e = \bigl( N^\e \bigr)^T\eff_t, \ \ \text{ with } \ \  N^\e(y_1,y_2)\coloneqq \sumsum_{x_1\in\Lambda_{y_1},x_2\in\Lambda_{y_2} } L^\e(x_1,x_2)\stat^\e(x_1|y_1),
\end{equation}
where $\mu_t^\e(\cdot|y), \stat^\e(\cdot|y)\in\P(\Lambda_y)$ are the conditional measures corresponding to $\mu^\e_t, \stat^\e$ respectively (see~\eqref{def:marg-cond} for definition) and $\Lambda_y = \{y\} \times \Z$ is the $y$-level set of $\xi$.

In what follows we will make use of the the limiting dynamics of~\eqref{eq:eps-KolEq} as $\e\rightarrow 0$. Specifically, it can be shown (see~\cite[Section 3]{HilderPeletierSharmaTse20} for a proof) that the coarse-grained dynamics converges to the so-called \emph{averaged dynamics}. 
\begin{theorem}[Classical averaging]\label{thm:classical_averaging}
Define the limiting generator $L^{\av}\in \mathbb R^{|\Y|\times |\Y|}$ as 
\begin{equation}\label{eq:aver-gen}
L^{\av}\coloneqq \begin{pmatrix}
-\lambda_0 & \lambda_0  \\ \lambda_1 & -\lambda_1
\end{pmatrix}, \ \ 
\lambda_y \coloneqq \sumsum_{z_1,z_2\in\Z} \stat_y(z_1) \sG_{y,1-y}(z_1,z_2),
\end{equation}
where $\stat_y$ is the stationary solution corresponding to $Q_y$. The solution to the coarse-grained dynamics $\cg^\e \in C^1([0,T];\P(\Y))$ (see~\eqref{eq:cg-eps}) converges with respect to the uniform topology in time and strong topology on $\P(\Y)$ to $\aver\in C^1([0,T];\P(\Y))$ which solves the averaged dynamics 
\begin{equation}	\label{eq:averaged_dynamics}
\partial_t \aver_t = \bigl( L^{\av}\bigr)^T\aver_t.
\end{equation}
\end{theorem}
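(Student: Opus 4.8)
The plan is to combine a compactness argument for the marginals $\cg^\e$ with the fast equilibration of the conditional measures on the level sets, and then to identify the limit through the integrated (mild) form of the coarse-grained dynamics. First I would record that the fast part $\tfrac1\e Q$ does not affect the evolution of the marginal: summing the forward Kolmogorov equation~\eqref{eq:eps-KolEq} over $z\in\Z$ and using that $Q$ is block-diagonal with $\sum_{z'}Q_y(z,z')=0$, the $\tfrac1\e Q$ contribution to $\partial_t\cg^\e_t(y)$ cancels, so $\partial_t\cg^\e_t=(\hat L^\e_t)^T\cg^\e_t$ with $\hat L^\e_t$ as in~\eqref{eq:cg-eps} (this is Lemma~\ref{lem:CG} specialised to the present splitting), and its entries involve only $\sG$ and the conditionals $\mu^\e_t(\cdot|y)$. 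Since $\|\hat L^\e_t\|_\infty\le\|\sG\|_\infty$ uniformly in $\e$ and $t$, the family $\{\cg^\e\}_{\e>0}$ is uniformly bounded in $\P(\Y)$ and uniformly Lipschitz in time; by Arzel\`a--Ascoli it is precompact in $C([0,T];\P(\Y))$, and the initial datum $\cg^\e_0=\xi_\#\mu_0=\cg_0$ is independent of $\e$.

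The crux is to show that the conditionals equilibrate, i.e.\ $\mu^\e_t(\cdot|y)\to\stat_y$ as $\e\to0$ for each fixed $t\in(0,T]$, where $\stat_y$ is the stationary measure of $Q_y$. Writing $\mu^\e_t(z|y)=\mu^\e_t(y,z)/\cg^\e_t(y)$ and differentiating in time, one finds that $\mu^\e_t(\cdot|y)$ solves a singularly perturbed equation of the form $\partial_t\mu^\e_t(\cdot|y)=\tfrac1\e (Q_y)^T\mu^\e_t(\cdot|y)+R^\e_t$, where the remainder $R^\e_t$ collects the $O(1)$ contributions coming from $\sG$ and from $\partial_t\cg^\e_t$. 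Since $Q_y$ is irreducible it has a spectral gap (equivalently, $\stat_y$ and $Q_y$ satisfy a Poincar\'e or log-Sobolev inequality), so the fast semigroup $e^{t(Q_y)^T/\e}$ contracts towards $\stat_y$ at rate $\sim 1/\e$. A Gr\"onwall/Lyapunov estimate on $\|\mu^\e_t(\cdot|y)-\stat_y\|_{\TV}$ (or on $\RelEnt(\mu^\e_t(\cdot|y)|\stat_y)$) then yields, after an initial layer of width $O(\e)$, the convergence $\mu^\e_t(\cdot|y)\to\stat_y$ uniformly on compact subsets of $(0,T]$ (together with an integrated bound $\int_0^T\|\mu^\e_t(\cdot|y)-\stat_y\|_{\TV}^2\,dt=O(\e)$ that absorbs the layer at $t=0$). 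This is the main obstacle, and it is precisely the averaging content established in~\cite[Section 3]{HilderPeletierSharmaTse20}.

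Finally I would pass to the limit. Extract a subsequence $\cg^{\e_k}\to\cg^\ast$ in $C([0,T];\P(\Y))$. From the explicit entries of $\hat L^\e_t$, for $y_1\ne y_2$ the fast part of $L^\e$ vanishes and $\hat L^\e_t(y_1,y_2)=\sum_{z_1,z_2}\sG_{y_1,y_2}(z_1,z_2)\,\mu^\e_t(z_1|y_1)$, which by the second step tends, for each fixed $t>0$, to $\sum_{z_1,z_2}\sG_{y_1,y_2}(z_1,z_2)\stat_{y_1}(z_1)=\lambda_{y_1}$; hence $\hat L^\e_t\to L^{\av}$ pointwise in $t$, with $L^{\av}$ as in~\eqref{eq:aver-gen}. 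Writing the mild form $\cg^\e_t=\cg_0+\int_0^t(\hat L^\e_s)^T\cg^\e_s\,ds$ and combining the pointwise-in-$t$ convergence $\hat L^{\e_k}_s\to L^{\av}$ with the uniform convergence $\cg^{\e_k}\to\cg^\ast$ and the uniform bound $\|\hat L^\e_s\|_\infty\le\|\sG\|_\infty$, dominated convergence gives $\int_0^t(\hat L^{\e_k}_s)^T\cg^{\e_k}_s\,ds\to\int_0^t(L^{\av})^T\cg^\ast_s\,ds$, so $\cg^\ast$ satisfies $\cg^\ast_t=\cg_0+\int_0^t(L^{\av})^T\cg^\ast_s\,ds$, i.e.\ the averaged dynamics~\eqref{eq:averaged_dynamics}. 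Since this linear, constant-coefficient ODE has a unique solution $\aver$ with the given initial datum, every subsequential limit coincides with $\aver$, and therefore the full family converges, $\cg^\e\to\aver$ in $C([0,T];\P(\Y))$. The only delicate point is making the product passage rigorous despite the initial layer, which is handled exactly by confining the conditional convergence to $(0,T]$ and invoking the integrated $L^2$ estimate of the second step.
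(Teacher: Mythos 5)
The paper does not actually prove this theorem itself: it states it and defers the proof entirely to \cite[Section 3]{HilderPeletierSharmaTse20}, so your proposal should be compared with that cited argument rather than with anything in the text. Your overall skeleton --- uniform Lipschitz bound on $\cg^\e$ from the cancellation of the $\tfrac1\e Q$ block in the marginal equation, Arzel\`a--Ascoli compactness, identification of the limit in the mild form, and uniqueness of the limiting linear ODE --- is sound and matches the standard averaging template. Where you genuinely diverge is in the middle step: the cited proof (as the paper itself indicates in Remark~\ref{rem:difficulty-eps}) obtains only \emph{narrow} convergence of $\mu^\e_t(\cdot|y)$ to $\stat_y$ as a measure on $[0,T]\times\Z$, extracted from a priori entropy/Fisher-information bounds, and that weak-in-time convergence is already enough to pass to the limit in $\int_0^t(\hat L^\e_s)^T\cg^\e_s\,ds$ because $\cg^\e$ converges uniformly. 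Your route instead proves the stronger pointwise-in-$t$ equilibration via the spectral gap of $Q_y$ and a Duhamel/Gr\"onwall estimate; this buys a rate (the $O(\e)$ integrated bound) but is not needed for the qualitative statement.

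The one step that needs repair is your equation $\partial_t\mu^\e_t(\cdot|y)=\tfrac1\e Q_y^T\mu^\e_t(\cdot|y)+R^\e_t$ with $R^\e_t=O(1)$: differentiating the quotient $\mu^\e_t(y,z)/\cg^\e_t(y)$ produces terms of size $O(1/\cg^\e_t(y))$, so closing the Gr\"onwall estimate uniformly in $\e$ requires a uniform-in-$\e$ positive lower bound on the marginal $\cg^\e_t(y)$. For general initial data with $\cg_0(y)=0$ this is exactly the difficulty the paper flags in Remark~\ref{rem:difficulty-eps} (the crude lower bound from the full generator degenerates like $e^{-1/\e}$), and there is a circularity: positivity of $\cg^\e_t$ with $\e$-uniform constants is most easily obtained \emph{after} one knows the conditionals are close to the positive measures $\stat_y$. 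The clean fix is to never form the conditional: apply Duhamel to the unnormalised block equation $\partial_t\mu^\e_t(y,\cdot)=\tfrac1\e Q_y^T\mu^\e_t(y,\cdot)+F^\e_t$ with $F^\e_t$ bounded, use the spectral projection of $Q_y^T$ to get $\mu^\e_t(y,\cdot)=\cg^\e_t(y)\stat_y+O(\e)+O(e^{-ct/\e})$, and note that the quantity actually appearing in the mild form is $\hat L^\e_s(y_1,y_2)\cg^\e_s(y_1)=\sum_{z_1,z_2}\sG_{y_1,y_2}(z_1,z_2)\mu^\e_s(y_1,z_1)$, which is well defined and converges to $\lambda_{y_1}\cg^\ast_s(y_1)$ without any division. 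With that modification your argument is complete and self-contained for arbitrary $\mu_0\in\P(\X)$.
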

While here we have phrased the averaging result in terms of the forward Kolmogorov equations to be consistent with rest of the article, these convergence results can be considerably generalised (see for instance~\cite[Chapter 16]{PavliotisStuart08} for pathwise convergence). 

Note that in general, the effective dynamics~\eqref{eq:eff-eps} is different from the averaged dynamics, which is easily seen for instance since the effective dynamics explicitly depends on $\e$, while the averaged dynamics does not. This is due to a fundamental difference between the two approaches -- classical averaging requires that the fast dynamics characterised by $Q_y$~\eqref{eq:eps-KolEq} has an stationary measure (for a fixed value of the slow variables), whereas the effective dynamics relies on the existence of an stationary measure for the full system characterised by $L^\e$ (which depends on $\e$). However, in the restrictive setting of reversible Markov chains the two dynamics coincide (see Section~\ref{sec:eps-rev}).

A natural question is to understand the behaviour of the effective dynamics as $\e\rightarrow 0$ and how it compares to the averaged dynamics. In Theorem~\ref{prop:eff-to-aver-eps} below we provide a quantitative error estimate between these two. To prove this error estimate we will need the following result which characterises the behaviour of the stationary measure $\stat^\e$ in the limit $\e\rightarrow 0$ (see~\cite[Lemma 3.3]{HilderPeletierSharmaTse20} for proof). 

\begin{lem}\label{lem:stat-limit-prop} Let $\stat^\e\in\P_+(\X)$ be a sequence of stationary measures corresponding to the generator $L^\e$~\eqref{eq:CG-res-gen}, i.e.\ $(L^\e)^T\stat^\e=0$. Then there exists a positive probability measure $\stat\in\P_+(\X)$ such that $\stat^\e\rightarrow \stat$ as $\e\rightarrow 0$. Furthermore, the conditional stationary measures satisfies $\stat^\e(\cdot|y)\rightarrow \stat_y(\cdot)$ as $\e\rightarrow 0$, where $\rho_y\in \P(\Z)$ satisfies $Q_y^T\rho_y=0$. Consequently, the marginal stationary measures converge to the limiting marginal stationary measure, i.e. $\xi_\#\stat^\e\rightarrow \xi_\#\stat$ as $\e\rightarrow 0$.   
\end{lem}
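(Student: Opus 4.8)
The plan is to exploit the two-scale structure of $L^\e=\frac1\e Q+\sG$ directly in the stationarity relation $(L^\e)^T\stat^\e=0$. First I would multiply by $\e$ to obtain $Q^T\stat^\e=-\e\,\sG^T\stat^\e$. Since $\P(\X)$ is compact and $\sG^T\stat^\e$ is uniformly bounded (as $\stat^\e$ is a probability measure and $\sG$ is a fixed matrix), any sequence $\e\to0$ admits a subsequence along which $\stat^\e\to\stat$ for some $\stat\in\P(\X)$, and passing to the limit yields $Q^T\stat=0$.

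Next I would use the block-diagonal structure $Q=\mathrm{diag}(Q_0,Q_1)$: writing $\stat(y,\cdot)$ for the restriction to the level set $\Lambda_y=\{y\}\times\Z$, the equation $Q^T\stat=0$ decouples into $Q_y^T\stat(y,\cdot)=0$ for each $y\in\Y$. Since each $Q_y$ is irreducible, $\ker Q_y^T$ is one-dimensional and spanned by $\stat_y$, whence $\stat(y,\cdot)=c_y\,\stat_y(\cdot)$ for constants $c_y\ge0$ with $\sum_{y\in\Y}c_y=1$. This already identifies the conditional limits as $\stat_y$ and reduces the problem to determining the macroscopic weights $c=(c_0,c_1)$.

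To pin down $c$, which I expect to be the main obstacle, I would integrate out the fast variable. Summing the component equation $\bigl((L^\e)^T\stat^\e\bigr)(y,z')=0$ over $z'\in\Z$ annihilates the singular term, because $\sum_{z'}\bigl(Q_y^T\stat^\e(y,\cdot)\bigr)(z')=0$ by the row-sum property of the generator $Q_y$; what remains is the order-one balance $\sum_{z'}\bigl(\sG^T\stat^\e\bigr)(y,z')=0$, valid for every $\e$. Using the definition of $\sG$ and of $D_y$ in~\eqref{eq-def:D}, this balance reads
\begin{equation*}
\sum_{z}\stat^\e(1-y,z)\sum_{z'}\sG_{1-y,y}(z,z')=\sum_{z}\stat^\e(y,z)\sum_{z'}\sG_{y,1-y}(z,z').
\end{equation*}
Passing to the limit along the subsequence and recalling $\lambda_y=\sum_{z_1,z_2\in\Z}\stat_y(z_1)\sG_{y,1-y}(z_1,z_2)$ from~\eqref{eq:aver-gen}, this becomes $c_{1-y}\lambda_{1-y}=c_y\lambda_y$, i.e.\ $\bigl(L^{\av}\bigr)^Tc=0$. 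This is the classical solvability (Fredholm-type) condition of averaging, and recognizing that the surviving order-one terms assemble precisely into the averaged generator is the crux of the argument.

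Finally, irreducibility of $L^\e$ forces at least one rate $\sG_{y,1-y}(z,z')$ to be positive for each $y$ (otherwise the macro-state $y$ could not be left), so $\lambda_0,\lambda_1>0$ and $L^{\av}$ is irreducible; hence $c$ is its unique positive stationary measure. This determines $\stat(y,z)=c_y\stat_y(z)>0$ uniquely, giving $\stat\in\P_+(\X)$. Since every convergent subsequence shares this same limit and $\P(\X)$ is compact, the full sequence converges, $\stat^\e\to\stat$. The marginal convergence $\xi_\#\stat^\e(y)=\sum_z\stat^\e(y,z)\to c_y=\xi_\#\stat(y)$ and, using $c_y>0$, the conditional convergence $\stat^\e(\cdot|y)=\stat^\e(y,\cdot)/\xi_\#\stat^\e(y)\to\stat_y$ then follow immediately.
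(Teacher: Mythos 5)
Your proof is correct and complete. Note that the paper itself does not prove this lemma — it defers to \cite[Lemma 3.3]{HilderPeletierSharmaTse20} — so there is no in-text argument to compare against; but your route is the standard one for such averaging statements and every step checks out: compactness of $\P(\X)$ gives subsequential limits, multiplying the stationarity relation by $\e$ identifies any limit point as lying in $\ker Q^T$, the block structure and irreducibility of each $Q_y$ (Perron--Frobenius, simple zero eigenvalue) force the conditional limits to be $\stat_y$, and summing over the fast variable kills the singular term and yields the solvability condition $(L^{\av})^T c = 0$ that pins down the macroscopic weights. Your observation that irreducibility of $L^\e$ forces $\lambda_y>0$ (so that $L^{\av}$ is irreducible and $c$ is unique and positive) is exactly the point needed to upgrade subsequential convergence to convergence of the full family and to justify dividing by $\xi_\#\stat^\e(y)$ for the conditional measures.
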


\begin{theorem}\label{prop:eff-to-aver-eps} 
For any $T < \infty$ and $\e\in(0,\e_0)$ there exists a constant $C < \infty$ independent of $\e>0$ such that
\begin{equation*}
\sup_{t \in [0,T]} \|\eta^\e_t - \aver_t\|_{\TV} \leq C\left(\|\eta_0^\e - \aver_0\|_{\TV} + \varepsilon\right)
\end{equation*}
In particular, if $\eta_0^\e \rightarrow \aver_0$ in $\P(\Y)$ as $\e \rightarrow 0$, then $\eta^\e \rightarrow \aver$ in $C([0,T];\P(\Y))$ as $\e \rightarrow 0$.  
\end{theorem}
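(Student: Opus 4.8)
The plan is to compare the two evolutions at the level of their generators and then propagate the generator error to the trajectories by a Duhamel/Grönwall argument, the crux being a \emph{quantitative} $O(\e)$ version of the convergence in Lemma~\ref{lem:stat-limit-prop}.

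\emph{Step 1 (generator comparison).} Since the block $Q$ only connects micro-states within a fixed macro-state, its contribution to the off-diagonal entries of $N^\e$ vanishes, so for $y\in\Y$ one has
\begin{equation*}
N^\e(y,1-y) = \sumsum_{z_1,z_2\in\Z} \sG_{y,1-y}(z_1,z_2)\,\stat^\e(z_1|y),
\end{equation*}
whereas $L^{\av}(y,1-y)=\lambda_y=\sumsum_{z_1,z_2\in\Z}\stat_y(z_1)\sG_{y,1-y}(z_1,z_2)$. Subtracting, and using $\sum_{y_2}N^\e(y,y_2)=\sum_{y_2}L^{\av}(y,y_2)=0$ to control the diagonal, I would obtain
\begin{equation*}
\|N^\e - L^{\av}\|_\infty \leq C\,\|\sG\|_\infty\, \max_{y\in\Y}\|\stat^\e(\cdot|y)-\stat_y\|_{\TV}.
\end{equation*}

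\emph{Step 2 (quantitative rate for the conditional stationary measures).} This is the main obstacle: Lemma~\ref{lem:stat-limit-prop} only gives $\stat^\e(\cdot|y)\to\stat_y$ qualitatively, whereas the claimed estimate needs the rate $\|\stat^\e(\cdot|y)-\stat_y\|_{\TV}=O(\e)$. I would upgrade the lemma as follows. The stationary vector $\stat^\e$ is the unique solution of the linear system $(L^\e)^T\stat^\e=0$ together with the normalisation $\sum_{x\in\X}\stat^\e(x)=1$; since $L^\e=\tfrac1\e Q+\sG$, the entries of this system are affine in $1/\e$, so by Cramer's rule each coordinate $\stat^\e(x)$ is a rational function of $\e$ on $(0,\e_0]$. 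By Lemma~\ref{lem:stat-limit-prop} this rational function has a finite limit as $\e\to 0^+$, hence its singularity at $\e=0$ is removable and $\e\mapsto\stat^\e$ is Lipschitz on $[0,\e_0]$, giving $\|\stat^\e-\stat\|_{\TV}\leq C\e$. Since the limiting marginal $\xi_\#\stat$ is strictly positive (again by Lemma~\ref{lem:stat-limit-prop}), dividing by $\xi_\#\stat^\e(y)$, which is bounded away from $0$ for small $\e$, transfers this rate to the conditionals, so $\|\stat^\e(\cdot|y)-\stat_y\|_{\TV}\leq C\e$ and, with Step~1, $\|N^\e-L^{\av}\|_\infty\leq C\e$. (An equivalent route is the matched expansion $\stat^\e=\stat+\e\,\stat^1+O(\e^2)$, whose solvability condition at order $\e$ recovers stationarity of $L^{\av}$; but the rational-dependence argument is shorter and avoids the matched asymptotics.)

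\emph{Step 3 (Duhamel/Grönwall).} Setting $w_t\coloneqq\eff^\e_t-\aver_t$, which has zero total mass, the evolutions~\eqref{eq:eff-eps},~\eqref{eq:averaged_dynamics} give
\begin{equation*}
\partial_t w_t = (N^\e)^T w_t + \bigl((N^\e)^T-(L^{\av})^T\bigr)\aver_t .
\end{equation*}
Duhamel's formula together with the fact that $e^{\tau(N^\e)^T}$ is a Markov semigroup (hence a contraction in $\|\cdot\|_{\TV}$ on zero-mass signed measures, since $N^\e$ is a generator by construction) yields
\begin{equation*}
\|w_t\|_{\TV}\leq \|w_0\|_{\TV} + \int_0^t\bigl\|\bigl((N^\e)^T-(L^{\av})^T\bigr)\aver_s\bigr\|_{\TV}\,ds \leq \|w_0\|_{\TV} + C\,T\,\|N^\e-L^{\av}\|_\infty,
\end{equation*}
using $\|\aver_s\|_{\TV}=1$. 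Inserting $\|N^\e-L^{\av}\|_\infty\leq C\e$ from Step~2 and taking the supremum over $t\in[0,T]$ gives the claim with a constant depending only on $T$. (If one prefers to avoid the contraction property, the term $(N^\e)^Tw_t$ may be kept and handled by Grönwall, which is admissible since $\|N^\e\|_\infty$ is uniformly bounded for small $\e$ as $N^\e\to L^{\av}$; this only changes the constant by a factor $e^{CT}$.) The final ``in particular'' statement is then immediate: if $\eff_0^\e\to\aver_0$ in $\P(\Y)$, both terms on the right vanish as $\e\to0$, so $\eff^\e\to\aver$ in $C([0,T];\P(\Y))$.
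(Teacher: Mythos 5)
Your proof is correct, and it departs from the paper's argument at the two places that matter. The generator comparison in Step~1 is essentially identical to the paper's (the $\tfrac1\e Q$ block drops out of $N^\e$ because $Q$ does not connect distinct macro-states, reducing everything to $\sup_y\|\stat^\e(\cdot|y)-\stat_y\|$). For the key quantitative input $\|\stat^\e(\cdot|y)-\stat_y\|_{\TV}\le C\e$, the paper works directly with the stationarity equation: dividing $(L^\e)^T\stat^\e=0$ by $\xi_\#\stat^\e(y)$ gives $Q_y^T\stat^\e(\cdot|y)=-\e\,\curlI^\e(\cdot;y)$ with $\curlI^\e$ bounded, and then a Perron--Frobenius argument shows $0$ is a simple eigenvalue of $Q_y$ so that $Q_y^T$ is invertible on the range of the spectral projection $\mathbb{P}_{\neq 0}$, yielding the $O(\e)$ bound. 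Your Cramer's-rule argument (rationality of $\e\mapsto\stat^\e$ after multiplying the system by $\e$, boundedness on $(0,\e_0]$ forcing the singularity at $0$ to be removable, hence Lipschitz dependence) is a legitimate and shorter alternative that in fact gives the stronger conclusion of analytic dependence; its price is that it is tied to the finite-dimensional linear-algebraic structure, whereas the paper's resolvent/spectral-gap argument is the one that survives in more general settings and makes explicit that the $O(\e)$ constant is controlled by $\|(Q_y^T)^{-1}\mathbb{P}_{\neq 0}\|$. (Note you still implicitly use Perron--Frobenius through Lemma~\ref{lem:stat-limit-prop} to identify the limit of the conditionals with $\stat_y$ and to get positivity of $\xi_\#\stat$.) Finally, your Step~3 replaces the paper's matrix-exponential perturbation bound $\|e^{M_1+M_2}-e^{M_1}\|\le\|M_2\|e^{\|M_1\|}e^{\|M_2\|}$ by Duhamel plus the $\TV$-contractivity of the Markov semigroup $e^{\tau(N^\e)^T}$ on zero-mass signed vectors; this is sound and actually yields a constant growing linearly in $T$ rather than like $Te^{CT}$, a mild improvement over the paper's bound.
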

\begin{proof}
Using the equivalence of the total-variation norm and the Euclidean norm on a finite state space, we have the estimate 
\begin{equation}\label{eq:eff-ave-aux1}
\begin{aligned}
\|\eta^\e - \mu^{\av}\|_{C([0,T];\P(\Y))} &= \sup_{t \in [0,T]} \|\eta_t^\e - \mu^{\av}_t\|_{\TV}= \sup_{t \in [0,T]} \Bigl\|e^{t(N^\e)^T}\eta_{t=0}^\e - e^{t(L^{\av})^T}\aver_{t=0}\Bigr\|_{\TV} \\
&\leq C\Bigl[\sup_{t \in [0,T]} \Bigl\|\bigl(e^{t(N^\e)^T} - e^{t(L^{\av})^T}\bigr) \eta^\e_{t=0}\Bigr\|_{\R^{|\Y|}} + \sup_{t \in [0,T]} \Bigl\|e^{t(L^{\av})^T} (\eta^\e_{t=0} - \aver_{t=0})\Bigr\|_{\R^{|\Y|}}\Bigr].
\end{aligned}
\end{equation}
We now estimate both terms on the right-hand side separately.
For the second term we find
\begin{equation}\label{eq:eff-ave-aux2}
\sup_{t \in [0,T]} \Bigl\|e^{t(L^{\av})^T} (\eta^\e_{t=0} - \aver_{t=0})\Bigr\|_{\R^{|\Y|}} \leq \Bigl(\sup_{t \in [0,T]} \|e^{t(L^{\av})^T}\|_{\R^{|\Y| \times |\Y|}}\Bigr) \|\eta^\e_{t=0} - \aver_{t=0}\|_{\R^{|\Y|}} \leq C \|\eta^\e_{t=0} - \aver_{t=0}\|_{\TV}
\end{equation}
with a constant $C = C(L^{\av},T)$ independent of $\e$. For the first term we make use of the following matrix inequality for $M_1, M_2 \in \R^{|\Y| \times |\Y|}$
\begin{align*}
\|e^{M_1 + M_2} - e^{M_1} \|_{\R^{|\Y| \times |\Y|}} \leq \|M_1\|_{\R^{|\Y| \times |\Y|}} e^{\|M_1\|_{\R^{|\Y| \times |\Y|}}} e^{\|M_2\|_{\R^{|\Y| \times |\Y|}}}.
\end{align*}
Choosing $M_1 = t (L^{\av})^T$ and $M_2 = t((N^\e)^T - (L^{\av})^T)$ yields
\begin{equation}\label{eq:eff-ave-aux3}
\Bigl\|e^{t(N^\e)^T} - e^{t(L^{\av})^T}\Bigr\|_{\R^{|\Y| \times |\Y|}} \leq |t| \left\|(N^\e)^T - (L^{\av})^T\right\|_{\R^{|\Y| \times |\Y|}} e^{|t|\|(L^{\av})^T\|_{\R^{|\Y| \times |\Y|}}} e^{|t|\|(N^\e)^T - (L^{\av})^T\|_{\R^{|\Y| \times |\Y|}}}.
\end{equation}
In what follows we will show that there exists a constant $C$ independent of $\e$ such that 
\begin{equation}	\label{eq:genConvergenceEffToAver}
\|(N^\e)^T - (L^{\av})^T\|_{\R^{|\Y| \times |\Y|}} \leq C \varepsilon.
\end{equation}
Applying this bound to~\eqref{eq:eff-ave-aux3} and substituting along with~\eqref{eq:eff-ave-aux2} back into~\eqref{eq:eff-ave-aux1} we arrive at the required quantitative result. 
Furthermore, if the initial data converges, the upper bound vanishes as $\e \rightarrow 0$, which proves the remaining part of the proposition.

In the remainder of this proof we show~\eqref{eq:genConvergenceEffToAver}. 
Using $\Lambda_y = \{y\} \times \Z$, for any $y_1,y_2 \in \Y$ we can rewrite the effective generator $N^\e$ as
\begin{equation*}
N^\e(y_1,y_2) = \dfrac{1}{\e}\sumsum_{z_1,z_2 \in \Z} Q((y_1,z_1),(y_2,z_2)) \rho^\e(z_1 \vert y_1) + \sumsum_{z_1,z_2 \in \Z} \sG((y_1,z_1),(y_2,z_2)) \rho^\e(z_1 \vert y_1)
\end{equation*}
Note that the first sum vanishes if $y_1 \neq y_2$ since $Q((y_1,z_1),(y_2,z_2)) = 0$, and if $y_1 = y_2 = y$ we have
\begin{equation}\label{eq:eps-Q-effect}
\sumsum_{z_1,z_2 \in \Z} Q((y,z_1),(y,z_2)) \rho^\e(z_1\vert y) = \sum_{z_1 \in \Z} \rho^\e(z_1 \vert y) \sum_{z_2 \in \Z} Q((y,z_1),(y,z_2)) = 0.
\end{equation}
\sloppy{Here we have used $\sum_{z_2 \in \Z} Q((y,z_1),(y,z_2)) = \sum_{x_2 \in \X} Q((y,z_1),x_2) = 0$, which follows again  since $Q((y_1,z_1),(y_2,z_2)) = 0$ for $y_1 \neq y_2$.} 
Therefore for $y \in \Y$ we find 
\begin{equation*}
N^\varepsilon(y,1-y) = \sumsum_{z_1,z_2 \in \Z} \rho^\e(z_1 \vert y) \sG_{y,1-y}(z_1,z_2),
\end{equation*}
i.e.\ the explicit scale-separation parameter drops out from the effective generator due to the definition of $\xi$. Note that the diagonal terms $N^\e(y,y)$ have a similar form as above due to the definition~\eqref{eq-def:D} of $D_y$.
Using the definition of the limiting generator $L^{\av}$ we obtain the estimate
\begin{equation}\label{eq:eff-ave-aux4}
\|(N^\e)^T - (L^{\av})^T\|_{\R^{|\Y| \times |\Y|}} \leq \sqrt{|\Z|}\|\sG\|_{\R^{|\Z| \times |\Z|}} \sup_{y \in \Y} \|\rho^\e(\cdot \vert y) - \rho_y(\cdot)\|_{\R^{|\Z|}}
\end{equation}
and therefore we need to estimate  $\|\rho^\e(\cdot \vert y) - \rho_y(\cdot)\|_{\R^{|\Z|}}$ as $\e \rightarrow 0$ uniformly in $y \in \Y$.

Fix $y \in \Y$. Since $\rho^\varepsilon$ is the stationary measure of $(L^\varepsilon)^T$, for any $z \in \Z$ we find
\begin{align}
0 &= ((L^\varepsilon)^T \rho^\varepsilon)((y,z)) = \sum_{x_2 \in \X} L^\varepsilon(x_2,(y,z)) \rho^\varepsilon(x_2) \nonumber \\
&= \dfrac{1}{\varepsilon} \sum_{z_2 \in \Z} Q_y(z_2, z) \rho^\varepsilon((y,z_2)) + \sum_{z_2 \in \Z} \sG_{1-y,y}(z_2,z) \rho^\varepsilon((1-y,z_2)) - \sum_{z_2\in\Z} \sG_{y,1-y}(z,z_2)\stat^\e((y,z)), \label{eq:effToAveraged_aux1}
\end{align}
where the final negative term arises due to $D_y$~\eqref{eq-def:D}
By Proposition \ref{prop:eff-inv}, the stationary measure of the effective dynamics \eqref{eq:eff-eps} is given by $\xi_{\#}\rho^\varepsilon$. 
Lemma~\ref{lem:stat-limit-prop} states that $\rho^\e(\cdot|y)\rightarrow \stat_y(\cdot)$ as $\e\rightarrow 0 $ where $\stat_y$ is a positive measure due to the irreducibility of $Q_y$. Therefore, we can divide \eqref{eq:effToAveraged_aux1} by $\xi_{\#}\rho^\varepsilon(y)$  and obtain
\begin{align*}
0 &= \sum_{z_2 \in \Z} \dfrac{1}{\varepsilon} Q_y(z_2, z) \rho^\varepsilon(z_2 \vert y) + \sum_{z_2 \in \Z} \sG_{1-y,y}(z_2,z) \dfrac{\rho^\varepsilon((1-y,z_2))}{\xi_\# \rho^\varepsilon(y)} - \sum_{z_2 \in \Z} \sG_{y,1-y}(z,z_2) \dfrac{\rho^\varepsilon((y,z))}{\xi_\# \rho^\varepsilon(y)}\\  
&=: \dfrac{1}{\varepsilon} (Q^T_y \rho^\varepsilon(\cdot \vert y))(z) + \curlI^\varepsilon(z;y),
\end{align*} 
where we use the definition of the conditional measure \eqref{def:marg-cond}.
We point out that since $\xi_\# \rho^\varepsilon$ converges to a positive measure, $\curlI^\varepsilon(z;y)$ can be bounded independently of $\varepsilon, y, z$.
Multiplying the equation above  by $\varepsilon$ yields
\begin{equation}	\label{eq:effToAveraged_aux2}
Q^T_y\rho^\varepsilon(\cdot \vert y) = - \varepsilon \curlI^\varepsilon(\cdot; y).
\end{equation}
Note that $0$ is an eigenvalue for $Q_y$ (as it admits $\stat_y$ as a stationary measure) and the real parts of all the eigenvalues of $Q_y$ lie in $[-2r_y,0]$ due to the Gerschgorin's circle theorem. Next, we define the non-negative irreducible matrix $P_y \coloneqq Q_y + (\max_{z \in \Z} \vert Q_y(z,z)\vert) I$, where $I$ is the identity matrix on $\R^{|\Z| \times |\Z|}$. Clearly $r_y$ is an eigenvalue of $P_y$ and the real parts of all the eigenvalues of $Q_y$ lie in $[-r_y,r_y]$. Thus $r_y$ is the spectral radius of $P_y$ and by the Perron-Frobenius theorem $r_y$ is a simple eigenvalue of $P_y$ (with algebraic and geometric multiplicity one), and therefore $0$ is a simple eigenvalue for $Q_y$, i.e.\ the eigenspace corresponding to $0$ is one-dimensional and spanned by $\stat_y$. 

We define the spectral projection $\mathbb{P}_{\neq 0} : \R^{|\Z|} \rightarrow \R^{|\Z|}$ onto the non-zero eigenspaces of $Q_y^T$.
Applying the projection to \eqref{eq:effToAveraged_aux2} and using $\mathbb{P}_{\neq 0}Q_y^T=Q_y^T\mathbb{P}_{\neq 0}$ we find
\begin{align*}
Q_y^T\mathbb{P}_{\neq 0}\rho^\varepsilon(\cdot \vert y) = - \varepsilon \mathbb{P}_{\neq 0} \curlI^\varepsilon(\cdot;y).
\end{align*}
Since $Q_y^T$ is invertible on $\operatorname{Ran}(\mathbb{P}_{\neq 0})$, the range of $\mathbb{P}_{\neq 0}$, we find 
\begin{equation*}
\|\mathbb{P}_{\neq 0} \rho^\varepsilon(\cdot \vert y)\|_{\R^{|\Z|}} \leq \varepsilon \|(Q_y^T)^{-1} \mathbb{P}_{\neq 0}\|_{\R^{|\Z|\times |\Z|}} \|\curlI^\varepsilon(\cdot;y)\|_{\R^{|\Z|}} \leq C \varepsilon,
\end{equation*}
for some constant $C$ independent of $\varepsilon,y,z$, where we have used a uniform lower bound on $\xi_\#\stat^\e$ due to Lemma~\ref{lem:stat-limit-prop}. 
Next, exploiting that $\operatorname{Ran}(I-\mathbb{P}_{\neq 0}) = \operatorname{Span}(\rho_y)$ we write
\begin{align*}
\rho^\varepsilon(\cdot \vert y) - \rho_y(\cdot) = (I-\mathbb{P}_{\neq 0}) \rho^\varepsilon(\cdot \vert y) - \rho_y + \mathbb{P}_{\neq 0}\rho^\varepsilon(\cdot \vert y) =: (\nu(\varepsilon,y) - 1) \rho_y + \mathbb{P}_{\neq 0}\rho^\varepsilon(\cdot \vert y),
\end{align*}
with $\nu(\varepsilon,y) \in \R$. Summing over all elements in $\Z$, using $\sum_{z \in \Z} [\rho^\varepsilon(\cdot \vert y) - \rho_y(\cdot)] = 0$ and $\rho_y \in \P(\Z)$ then yields
\begin{equation*}
|\nu(\varepsilon,y) - 1| = \Bigl\vert\sum_{z \in \Z} \mathbb{P}_{\neq 0}\rho^\varepsilon(\cdot \vert y)\Bigr\vert \leq C \|\mathbb{P}_{\neq 0} \rho^\varepsilon(\cdot \vert y)\|_{\R^{|\Z|}} \leq C \varepsilon.
\end{equation*}
This proves (using norm equivalence on $\R^{|\Z|}$) that $\|\rho^\varepsilon(\cdot\vert y) - \rho_y\|_{\TV} \leq C \varepsilon$ with a constant $C>0$ independent of $\varepsilon$ and substituting into~\eqref{eq:eff-ave-aux4} we arrive at the required bound~\eqref{eq:genConvergenceEffToAver}. 
\end{proof}

\subsection[Error estimates for fixed scale-separation parameter]{Error estimates for fixed $\e>0$}\label{sec:eps-fixed}

In this section we provide quantitative error estimates comparing the coarse-grained~\eqref{eq:cg-eps} and effective dynamics~\eqref{eq:eff-eps} in the $\e$-dependent setting which mirrors the general result in Theorem~\ref{thm:RelEntEst}. As in the general setting, we assume a log-Sobolev inequality on the level sets of $\xi$.
More precisely, we will assume that there exists a $\varepsilon_0 > 0$ and a constant $\alpha_{\LSI} > 0$ such that for any $\varepsilon \in (0,\e_0)$, $y \in \Y$ and $\nu \in \P(\Z)$ we have the estimate 
\begin{equation}\label{eq:LSIeps}\tag{$\LSI_\e$}
\RelEnt(\nu \vert \rho^\varepsilon(\cdot\vert y)) \leq \dfrac{1}{\alpha_{\LSI}} \RF_{Q_y}(\nu \vert \rho^\varepsilon(\cdot \vert y)).
\end{equation}
We note that the uniformity of $\alpha_{\LSI}$ with respect to $\e$ is essentially an assumption that the constant does not blow up as $\e\rightarrow 0$. 
However, since $\rho^\e(\cdot \vert y)$ converges to $\rho_y$  (stationary measure of $Q_y$) as $\e\rightarrow 0$ (see Lemma \ref{lem:stat-limit-prop}) and both relative entropy and relative Fisher information are continuous in the finite setting, the estimate \eqref{eq:LSIeps} reduces to the logarithmic Sobolev inequality as discussed for example in \cite{bobkovTetali06,zhang16} and the references therein. See Section~\ref{sec:choice_CG_map} for a detailed discussion on the log-Sobolev assumption in the case of a reversible toy-problem.

We now state the main error estimate in the presence of explicit scale-separation. 

\begin{theorem}\label{thm:eps-est-positive-init}
Let $\cg^\e, \eta^\e \in C^1([0,T];\P(\Y))$ be the solutions to coarse-grained dynamics \eqref{eq:cg-eps} and the effective dynamics \eqref{eq:eff-eps} with initial data $\cg^\e_0$ and $\eta_0^\e$ respectively. Assume that
\begin{enumerate}[label=({A}\arabic*)]
\item\label{ass-quantres1} The initial data for coarse-grained and effective dynamics is positive, i.e.\ there exists $c_0>0$ independent of $\e$ such that $\cg_0^\e(y),\eff_0^\e(y) > c_0$ for any $y\in\Y$. Furthermore, the initial data $\mu^\e_0$ converges in $\P(\X)$ and $\cg_0^\e,\eff_0^\e$ converge  in $\P(\Y)$ as $\e\rightarrow 0$.  
\item\label{ass-quantres2} There exists $\e_0 > 0$ and $\alpha_{\LSI} > 0$ such that \eqref{eq:LSIeps} holds.    
\end{enumerate}
Then there exists $C>0$ such that for any $\e \in (0,\e_0)$
\begin{equation}		\label{eq:quantEstEps}
    \sup_{t \in [0,T]} \RelEnt(\cg_t^\e \vert \eta_t^\e) \leq 2 \RelEnt(\cg_0^\e \vert \eta_0^\e) + C \dfrac{\e T}{\alpha_{\LSI}} \left[\RelEnt(\mu_0^\e \vert \rho^\e) - \RelEnt(\mu_T^\e \vert \rho^\e)\right].
\end{equation}
If additionally there exists $C_0 < \infty$ independent of $\e$ such that for any $\e \in (0,\e_0)$ we have 
\begin{equation}\label{ref:ass-improved-rate}
    \RelEnt(\cg_0^\e \vert \eta_0^\e) \leq C_0\e, 
\end{equation} 
then there exists a constant $D=D(T)>0$ independent of $\e$ and $\alpha_{\LSI}$ such that 
\begin{equation}\label{eq:quantEstEpsImproved}
    \sup_{t \in [0,T]} \RelEnt(\cg^\e_t \vert \eta^\e_t) \leq \biggl( 2C_0 + \frac{D}{\alpha_{\LSI}} \biggr) \e. 
\end{equation}
In particular, the estimate~\eqref{eq:quantEstEpsImproved} holds when $\cg^\e_0=\eff^\e_0$.
\end{theorem}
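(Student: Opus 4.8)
The plan is to run the proof of Theorem~\ref{thm:RelEntEst} essentially verbatim, but to bookkeep the explicit $\e$-dependence produced by the scale-separated structure $L^\e = \tfrac1\e Q + \sG$. First I would re-derive the identity and upper bound of Lemma~\ref{lem:pre-FIR} for the $\e$-dynamics, giving
$\RelEnt(\cg^\e_t|\eta^\e_t)\le\RelEnt(\cg^\e_0|\eta^\e_0)+2\int_0^t g^\e_s\sum_y\|\mu^\e_s(\cdot|y)-\rho^\e(\cdot|y)\|_{\TV}\,\cg^\e_s(y)\,ds$, where $g^\e$ is the analogue of~\eqref{def:C_t}. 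The first structural observation is that the stiff term $\tfrac1\e Q$ drops out of $g^\e_t$: since $Q$ only connects states inside a common level set, the factor $\log(\cg^\e_t/\eta^\e_t)(\xi(x_1))-\log(\cg^\e_t/\eta^\e_t)(\xi(x_2))$ vanishes whenever $Q(x_1,x_2)\neq0$, so $g^\e_t$ is controlled by $\sG$ alone and stays bounded uniformly in $\e$.

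The second structural point is where the extra factor $\e$ comes from. Assumption~\ref{ass-quantres2} is a log-Sobolev inequality~\eqref{eq:LSIeps} with respect to $Q_y$ rather than the full restricted generator, so after CKP and~\eqref{eq:LSIeps} the level-set error is controlled by $\sum_y\RF_{Q_y}(\mu^\e_t(\cdot|y)|\rho^\e(\cdot|y))\,\cg^\e_t(y)$. Because $Q$ is block-diagonal, the same conditional-to-joint manipulation as in Theorem~\ref{thm:RelEntEst} shows this sum equals $\RF_Q(\mu^\e_t|\rho^\e)$ exactly; and since $\RF$ is linear in the generator and $\sG$ is itself a generator (so $\RF_\sG\ge0$ by Lemma~\ref{lem:pos-FI}), one gets $\RF_Q=\e(\RF_{L^\e}-\RF_\sG)\le\e\,\RF_{L^\e}$. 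Integrating in time and using $\int_0^t\RF_{L^\e}(\mu^\e_s|\rho^\e)\,ds=\RelEnt(\mu_0^\e|\rho^\e)-\RelEnt(\mu_t^\e|\rho^\e)$ inserts the extra $\e$ under the square root; combining with $\|g^\e\|_{L^2(0,t)}\le C\sqrt t$ via Cauchy--Schwarz in time and Jensen on $\Y$ yields~\eqref{eq:quantEstEps}.

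For the improved rate~\eqref{eq:quantEstEpsImproved} the idea is to make the estimate self-improving. Using~\ref{ass-quantres1} I would first establish uniform-in-$\e$ lower bounds $\cg^\e_t(y),\eta^\e_t(y)\ge\kappa>0$ on $[0,T]$: the marginal generators $\hat L^\e_t$ and $N^\e$ have nonnegative off-diagonal entries and are bounded uniformly in $\e$ (again because $\tfrac1\e Q$ contributes nothing off the diagonal blocks), so a Gr\"onwall comparison against the positive initial data gives $\kappa=c_0e^{-MT}$. On $[\kappa,1]$ the logarithm is Lipschitz, whence $g^\e_t\le C\sqrt{\RelEnt(\cg^\e_t|\eta^\e_t)}$ by CKP. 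Feeding this refined bound back turns~\eqref{eq:quantEstEps} into an inequality of the form $\sup_{[0,T]}\RelEnt(\cg^\e_t|\eta^\e_t)\le c_0\e+C\sqrt{T\e}\,\bigl(\sup_{[0,T]}\RelEnt(\cg^\e_t|\eta^\e_t)\bigr)^{1/2}$, which I would iterate: starting from the a priori exponent $\tfrac12$ coming from~\eqref{eq:quantEstEps}, each substitution improves the exponent of $\e$ according to $p\mapsto(p+1)/2$, so after finitely many steps it exceeds $1-\nu$ for any prescribed $\nu\in(0,1)$, at the cost of a constant $c(\nu)$ that degrades as $\nu\to0$. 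The case $\cg^\e_0=\eta^\e_0$ is immediate, since then $\RelEnt(\cg^\e_0|\eta^\e_0)=0\le c_0\e$ and~\eqref{ref:ass-improved-rate} holds trivially.

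The main obstacle I anticipate is the self-improving step rather than the derivation of~\eqref{eq:quantEstEps}, which is comparatively routine once the two structural observations (the $\tfrac1\e Q$ cancellation in $g^\e$ and the identity $\RF_Q\le\e\,\RF_{L^\e}$) are in place. Concretely one must (i) secure the lower bound $\kappa$ uniformly in $\e$ — this is precisely where the non-stiffness of the marginal generators is essential, since the full generator $L^\e$ is stiff but its $\xi$-projection is not — and (ii) control the bootstrap so that the constants stay finite at each iteration and only finitely many iterations are needed to reach exponent $1-\nu$.
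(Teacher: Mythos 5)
Your proposal is correct and reproduces the paper's overall architecture (the $\e$-version of Lemma~\ref{lem:pre-FIR}, the observation that $\tfrac1\e Q$ drops out of $g^\e_t$ because $Q$ only connects states within a level set, the rescaling $\RF_{Q_y}=\e\,\RF_{\e^{-1}Q_y}$ feeding into the entropy--Fisher identity, and the bootstrap $p\mapsto(p+1)/2$ starting from $p=\tfrac12$). The one place where you genuinely diverge is the uniform-in-$\e$ lower bound $\cg^\e_t(y),\eff^\e_t(y)\ge\kappa>0$. The paper obtains it indirectly: it invokes Theorems~\ref{thm:classical_averaging} and~\ref{prop:eff-to-aver-eps} to deduce that both $\cg^\e$ and $\eff^\e$ converge uniformly in time to solutions of the averaged dynamics~\eqref{eq:averaged_dynamics} with positive initial data, applies Proposition~\ref{lem:forKol-low} to the ($\e$-independent, irreducible) averaged generator, and transfers the resulting lower bound back to $\cg^\e,\eff^\e$ for small $\e$. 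Your Gr\"onwall comparison is more elementary and self-contained: since $\sum_{z_2}Q_y(z_1,z_2)=0$, the stiff block contributes nothing to the diagonal entries of $\hat L^\e_t$ and $N^\e$, so $\hat L^\e_t(y,y)\ge -\|\sG\|_\infty$ uniformly in $t,\e$, the off-diagonal terms of $(\hat L^\e_t)^T\cg^\e_t$ are nonnegative, and $\cg^\e_t(y)\ge c_0e^{-\|\sG\|_\infty T}$ follows directly (likewise for $\eff^\e$). This buys you independence from the averaging results in this step and does not even use the convergence of the initial data, only its positivity; the paper's route, by contrast, leans on machinery it has already built. Your reformulation of the Fisher-information step via linearity, $\RF_Q=\e(\RF_{L^\e}-\RF_\sG)\le\e\,\RF_{L^\e}$ with $\RF_\sG\ge0$ from Lemma~\ref{lem:pos-FI}, is an equivalent repackaging of the paper's ``restriction of $L^\e$ to $\Lambda_y$'' argument. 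One small organizational remark: the lower bound $\kappa$ is already needed to make $\|g^\e\|_{L^2(0,t)}\le C\sqrt t$ rigorous in your derivation of~\eqref{eq:quantEstEps}, so it should be established before, not after, that step.
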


\begin{proof}
Repeating the arguments as in the proof of Lemma~\ref{lem:pre-FIR} and using the positivity of the Fisher information (see Lemma~\ref{lem:pos-FI}) we arrive at
\begin{align}
\RelEnt(\cg_t^\varepsilon|\eff_t^\varepsilon) - \RelEnt(\cg_0^\varepsilon|\eff_0^\varepsilon) &\leq 2\int_0^t g_s \Bigl[\sum_{y\in \Y} \|\mu_s^\varepsilon(\cdot|y) - \stat^\varepsilon(\cdot|y) \|_{\TV}\, \cg_s^\varepsilon(y) \Bigr] ds \nonumber\\ 
& \leq 2\|g\|_{L^2((0,T))} \Bigl(  \int_0^t \sum_{y\in\Y} \| \mu^\e_s(\cdot|y)-\stat^\e(\cdot|y)\|_{\TV}^2 \cg^\e_s(y)\,ds  \Bigr)^{\frac12}, 
\label{eq:asymp-pre-est}
\end{align}
where $g_s$ is defined in \eqref{def:C_t} (now with the explicit dependence on $\e$) and the second inequality follows by applying the Cauchy-Schwarz inequality in time. 

We first prove that there exists an $\e$-independent constant $C >0$ such that 
\begin{equation}\label{eq:g_t_integral_bound_eps}
    \|g\|_{L^2(0,T)} \leq C \sqrt{T} \sup_{t \in [0,T]} \|\cg_t^\e - \eta_t^\e\|_{\TV}.  
\end{equation}
Using $\xi((y,z))=y$ and the decomposition~\eqref{eq:CG-res-gen} of the generator, $f_t$ defined in \eqref{def:C_t} can be rewritten as
\begin{align*}
    f_t((z_1,y_1)) &= \sum_{(z_2,y_2) \in \Z \times \Y} L^\e((z_1,y_1),(z_2,y_2)) \Bigl[\log\Bigl(\dfrac{\cg_t^\e(y_1)}{\eta_t^\e(y_1)}\Bigr) - \log\Bigl(\dfrac{\cg_t^\e(y_2)}{\eta_t^\e(y_2)}\Bigr)\Bigr] \\
    &= \sum_{z_2 \in \Z} \sG_{y_1,1-y_1}(z_1,z_2) \Bigl[\log\Bigl(\dfrac{\cg_t^\e(y_1)}{\eta_t^\e(y_1)}\Bigr) - \log\Bigl(\dfrac{\cg_t^\e(1-y_1)}{\eta_t^\e(1-y_1)}\Bigr)\Bigr].
\end{align*}
Therefore, $g_t = \sup_{(y,z)\in\Y\times\Z} f_t((y,z))$ satisfies
\begin{equation}\label{eq:g_t_bound_eps}
    g_t \leq 2 \Bigl(\sup_{y\in\Y}\|\sG_{y,1-y}\|_\infty \Bigr) \sup_{y \in \Y}\left\vert \log\cg_t^\e(y) - \log\eta_t^\e(y)\right\vert.
\end{equation}
By Theorem~\ref{thm:classical_averaging},~\ref{prop:eff-to-aver-eps} and since $\cg_0^\e,\eta_0^\e$ have a limit in $\P(\Y)$ as $\e \rightarrow 0$, both $\cg^\e$ and $\eta^\e$ converge uniformly in time to solutions of the averaged dynamics \eqref{eq:averaged_dynamics} with positive initial data.
Repeating the proof of Lemma~\ref{lem:eff-irred} along with the positivity of $\rho_y$ for every $y\in\Y$ (since $Q_y$ is irreducible), it follows that the generator $L^{\av}$ of the averaged dynamics is irreducible. Therefore we can apply Proposition~\ref{lem:forKol-low} to obtain a uniform and positive lower bound on any solution of \eqref{eq:averaged_dynamics} with positive initial data. Since the coarse-grained and effective dynamics converge to the averaged dynamics, there exists a constant $c_0 > 0$, which is independent of $\e \in (0,\e_0)$ such that
\begin{align*}
    \inf_{t \in [0,T], y \in \Y} \cg_t^\e(y) \geq c_0, \text{ and } \inf_{t \in [0,T], y \in \Y} \eta_t^\e(y) \geq c_0.
\end{align*}
Since the logarithm is uniformly Lipschitz continuous on $[c_0,1]$ with constant $c_0^{-1}$ we arrive at
\begin{align*}
    \sup_{y \in \Y} \left\vert \log\cg_t^\e(y) - \log\eta_t^\e(y)\right\vert \leq c_0^{-1} \|\cg_t^\e - \eta_t^\e\|_{\TV}.
\end{align*}
Inserting this into \eqref{eq:g_t_bound_eps} and integrating over $[0,t]$ (with $t\in[0,T]$) leads to 
\begin{equation*}
    \int_0^t g_s^2 \,dt \leq 4\Bigl(\sup_{y\in\Y}\|\sG_{y,1-y}\|_\infty \Bigr)^2 c_0^{-2} T \sup_{s \in [0,T]} \|\cg_s^\e - \eta_s^\e\|_{\TV}^2.
\end{equation*}
This proves \eqref{eq:g_t_integral_bound_eps}. 

Now we provide an estimate for the integral term in the right hand side of~\eqref{eq:asymp-pre-est}. Using~\eqref{eq:LSIeps} and the linearity of $\RF_M$ in $M$ we find 
\begin{equation*}
    \|\mu_t^\e(\cdot \vert y) - \rho^\e(\cdot \vert y)\|_{\TV}^2 \leq 2 \RelEnt(\mu_t^\e(\cdot \vert y) \vert \rho^\e(\cdot \vert y)) \leq \dfrac{2}{\alpha_{\LSI}} \RF_{Q_{y}}(\mu_t^\e(\cdot \vert y) \vert \rho^\e(\cdot \vert y)) = \dfrac{2 \e}{\alpha_{\LSI}} \RF_{\e^{-1}Q_{y}}(\mu_t^\e(\cdot \vert y) \vert \rho^\e(\cdot \vert y)).
\end{equation*}
Since $(L^\e)^y = \e^{-1} Q_y$ is the restriction of $L^\e$ to the level set $\Lambda_y = \{y\} \times \Z$, repeating the arguments as in the proof of Theorem~\ref{thm:RelEntEst} we arrive at 
\begin{equation*}
\int_0^t \sum_{y\in\Y} \| \mu^\e_s(\cdot|y)-\stat^\e(\cdot|y)\|_{\TV}^2 \cg^\e_s(y)\,ds \leq \dfrac{2 \e}{\alpha_{\LSI}} \int_0^t \RF_{L^\e}(\mu_s^\e|\stat^\e) = \dfrac{2 \e}{\alpha_{\LSI}} \bigl[ \RelEnt(\mu^\e_0|\stat^\e)-\RelEnt(\mu^\e_t|\stat^\e) \bigr].
\end{equation*}
Substituting this bound along with~\eqref{eq:g_t_integral_bound_eps} back into~\eqref{eq:asymp-pre-est} we arrive at 
\begin{equation}\label{eq:asym-first-est}
\RelEnt(\cg^\e_t|\eff^\e_t) - \RelEnt(\cg^\e_0|\eff^\e_0)\leq C\sqrt{\dfrac{\e T}{\alpha_{\LSI}}} \sup_{t\in [0,T]}\|\cg_t^\e -\eff_t^\e\|_{\TV} \left[\RelEnt(\mu_0^\e \vert \rho^\e) - \RelEnt(\mu_t^\e \vert \rho^\e)\right]^\frac{1}{2}.
\end{equation}
Using $h^\e(t)\coloneqq \sup_{t\in [0,T]}\RelEnt(\hat\mu^\e_t|\eta^\e_t)$ along with the CKP inequality~\eqref{def:CKP} in~\eqref{eq:asym-first-est} we then find
\begin{align*}
    h^\e(t) &\leq \RelEnt(\cg^\e_0|\eff^\e_0) + C \sqrt{\frac{2\e T}{\alpha_{\LSI}}} \sqrt{h^\e(t)} \sqrt{\RelEnt(\mu_0^\e \vert \rho^\e) - \RelEnt(\mu_T^\e \vert \rho^\e)} \\
    &\leq \RelEnt(\cg_0^\e | \eff^\e_0)  + \e \frac{C^2 T}{2\alpha_{\LSI}} \left[\RelEnt(\mu_0^\e|\rho^\e) - \RelEnt(\mu_T^\e | \rho^\e)\right]  + \frac12 h^\e(t).
\end{align*}
The first inequality follows since $\RelEnt(\mu_t^\e \vert \rho^\e)$ is monotonically decreasing (recall \eqref{eq:ent-decay}) and thus the supremum of the entropy difference is attained at $t = T$. The second inequality then follows by applying the Young's inequality. This proves \eqref{eq:quantEstEps}.

Finally, to obtain the improved error estimate \eqref{eq:quantEstEpsImproved} we plug the assumption~\eqref{ref:ass-improved-rate} on the initial datum into \eqref{eq:quantEstEps}. The required estimate~\eqref{eq:quantEstEpsImproved} then follows as $\RelEnt(\mu_0^\eps|\rho^\eps)$ is bounded. This bound holds as $\RelEnt(\mu_0^\e \vert \stat^\e)\rightarrow \RelEnt(\mu_0 \vert \stat)$ when $\e\rightarrow 0$ since $\stat^\e\rightarrow \stat$ (see Lemma~\ref{lem:stat-limit-prop}) and the initial data $\mu_0^\e$ for the full system is assumed to converge in $\P(\X)$.   
\end{proof}

\begin{rem}\label{rem:differences-eps-thm-noneps-thm}
    When comparing the main error estimate in the general case \eqref{eq:RelEntEst} provided in Theorem \ref{thm:RelEntEst} to the error estimate obtained \eqref{eq:quantEstEps} in Theorem \ref{thm:eps-est-positive-init} in the case of explicit scale separation, one notices that the scaling in $\alpha_{\LSI}$ is different. While the general estimate \eqref{eq:RelEntEst} scales with $\sqrt{\alpha_{\LSI}}^{-1}$, the estimate $\eqref{eq:quantEstEps}$ scales with $\alpha_{\LSI}^{-1}$. In particular, we also obtain a linear scaling in $\varepsilon$ instead of a square root scaling which might be expected from the general estimate.

    The reason for this improvement lies in the different treatment of $g$ in the proofs. In fact, this is the main difference between the proofs of Theorem \ref{thm:RelEntEst} and \ref{thm:eps-est-positive-init}. In the general case we provide a non-specific upper bound on the $L^2$-norm of $g$ in Lemma \ref{lem:bound-g}. In contrast, in the proof of Theorem \ref{thm:eps-est-positive-init}, we show that the $L^2$-norm of $g$ can be bounded from above by the TV-norm of $\cg^\e - \eta^\e$, see \eqref{eq:g_t_integral_bound_eps}. This improved control then allows for an error estimate with better scaling in $\alpha_{\LSI}$ (and $\e$). However, to obtain \eqref{eq:g_t_integral_bound_eps} we use the Lipschitz continuity of the logarithm away from zero and thus, the estimate is tied to the Assumption \ref{ass-quantres1} of uniformly positive initial data.

    We point out that it is still possible to obtain estimates of the form in Theorem \ref{thm:RelEntEst} from \eqref{eq:asym-first-est} by using that $\|\cg_t^\e -\eff_t^\e\|_{\TV}$ is uniformly bounded in $t$ since $\cg^\e_s$, $\eff^\e_s$ converge as $\e\rightarrow 0$ (recall Theorem~\ref{thm:classical_averaging} and Theorem~\ref{prop:eff-to-aver-eps}).  
\end{rem}

Theorem~\ref{thm:eps-est-positive-init} requires that the initial data is positive. This restriction can be removed if the time interval $[0,T]$ is replaced by $[\delta, T]$ for some $\delta > 0$ (see~\eqref{app-eq:delta-tau} in Appendix~\ref{app:forKol-low} for a similar discussion). In this setting, the irreducibility of $N^\e$ and $L^\e$ implies that the solutions to the coarse-grained equation~\eqref{eq:cg-eps} and the effective equation~\eqref{eq:eff-eps} are instantly positive independent of the initial conditions (see Lemma \ref{lem:eff-irred}). Together with $\RelEnt(\cg_t^\e \vert \rho^\e) \leq \RelEnt(\cg_0^\e \vert \rho^\e)$ for $t \geq 0$ (see~\eqref{eq:ent-decay}) this leads to the following quantitative result.
\begin{prop}\label{prop:eps-est-delta}
Let $\cg^\e, \eta^\e \in C^1([0,T];\P(\Y))$ be the solutions to coarse-grained dynamics \eqref{eq:cg-eps} and the effective dynamics \eqref{eq:eff-eps} with initial data $\cg^\e_0$ and $\eta_0^\e$ respectively. Assume that~\ref{ass-quantres2} in Theorem~\ref{thm:eps-est-positive-init} holds and that the initial data $\cg_0^\e,\eff_0^\e$ converge  in $\P(\Y)$ as $\e\rightarrow 0$.  Then for any $\delta>0$ there exists a constant $C>0$ independent of $\e$ such that  for any $t\in [\delta, T]$ we have
\begin{equation*}
    \RelEnt(\cg_t^\e \vert \eta_t^\e) \leq 2 \RelEnt(\cg_\delta^\e \vert \eta_\delta^\e) + C \dfrac{2\e T}{\alpha_{\LSI}}\left[\RelEnt(\mu_0^\e \vert \rho^\e) - \RelEnt(\mu_T^\e \vert \rho^\e)\right].
\end{equation*}
\end{prop}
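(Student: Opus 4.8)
The plan is to recognise that the positivity assumption~\ref{ass-quantres1} enters the proof of Theorem~\ref{thm:eps-est-positive-init} in exactly one place: it produces the uniform-in-$\e$ lower bound $\inf_{t\in[0,T],\,y\in\Y}\{\cg_t^\e(y),\eff_t^\e(y)\}\geq c_0>0$ that underlies the control~\eqref{eq:g_t_integral_bound_eps} of $\|g\|_{L^2}$. Every other ingredient---the differential entropy identity underlying Lemma~\ref{lem:pre-FIR}, the log-Sobolev bound~\eqref{eq:LSIeps} on the conditional total variation, the non-negativity of the Fisher information from Lemma~\ref{lem:pos-FI}, and the telescoping via~\eqref{eq:ent-decay}---is insensitive to the initial data. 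Hence it suffices to re-establish the lower bound on the shifted interval $[\delta,T]$ and then rerun the computation with all time integrals starting at $\delta$ rather than $0$.

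First I would establish the lower bound on $[\delta,T]$. By Theorem~\ref{thm:classical_averaging} and Theorem~\ref{prop:eff-to-aver-eps}, and since $\cg_0^\e,\eff_0^\e$ converge in $\P(\Y)$ as $\e\to 0$, both $\cg^\e$ and $\eff^\e$ converge uniformly in time to solutions of the averaged dynamics~\eqref{eq:averaged_dynamics}---now with possibly non-positive (boundary) initial data. Arguing as in the proof of Theorem~\ref{thm:eps-est-positive-init} (repeating Lemma~\ref{lem:eff-irred} and using the positivity of each $\stat_y$), the averaged generator $L^{\av}$ is irreducible, so by Proposition~\ref{lem:forKol-low} every solution of~\eqref{eq:averaged_dynamics} is strictly positive for $t>0$ and therefore bounded below by a positive constant on the compact interval $[\delta,T]$. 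The uniform-in-time convergence then transfers this to a lower bound that is independent of $\e\in(0,\e_0)$: there is $c_\delta>0$ with $\inf_{t\in[\delta,T],\,y\in\Y}\{\cg_t^\e(y),\eff_t^\e(y)\}\geq c_\delta$. Lipschitz continuity of $\log$ on $[c_\delta,1]$ then yields $g_t\leq 2\|\sG\|_\infty c_\delta^{-1}\|\cg_t^\e-\eff_t^\e\|_{\TV}$ exactly as before, and integrating over $[\delta,t]$ gives the analogue of~\eqref{eq:g_t_integral_bound_eps} with $\|g\|_{L^2(\delta,T)}$ on the left.

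Next I would integrate the differential entropy identity underlying Lemma~\ref{lem:pre-FIR} over $[\delta,t]$, discard the non-negative Fisher-information term, and apply Cauchy--Schwarz in time to obtain the analogue of~\eqref{eq:asymp-pre-est} with lower limit $\delta$ and with $\RelEnt(\cg_\delta^\e|\eff_\delta^\e)$ in place of $\RelEnt(\cg_0^\e|\eff_0^\e)$. Bounding the conditional total variation through~\eqref{eq:LSIeps} (assumption~\ref{ass-quantres2}) and summing over $y\in\Y$ exactly as in the proof of Theorem~\ref{thm:RelEntEst} controls $\int_\delta^t\sum_{y}\|\mu_s^\e(\cdot|y)-\stat^\e(\cdot|y)\|_{\TV}^2\cg_s^\e(y)\,ds$ by $\tfrac{2\e}{\alpha_{\LSI}}\int_\delta^t\RF_{L^\e}(\mu_s^\e|\stat^\e)\,ds$. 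Since the Fisher information is non-negative I can enlarge this integral to start at $0$ and use~\eqref{eq:ent-decay} to bound it by $\RelEnt(\mu_0^\e|\stat^\e)-\RelEnt(\mu_t^\e|\stat^\e)$, which reproduces precisely the right-hand side claimed in the statement.

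The main obstacle is the uniform-in-$\e$ lower bound on $[\delta,T]$. Irreducibility of $N^\e$ and $L^\e$ gives instantaneous positivity of $\cg_t^\e,\eff_t^\e$ for each fixed $\e$, but a naive application of Proposition~\ref{lem:forKol-low} to the $\e$-dependent generators would produce constants that could degenerate as $\e\to 0$ (because of the singular $\e^{-1}Q$ term). The averaging results are exactly what rule this out, by comparison to the $\e$-independent averaged dynamics whose positivity on $[\delta,T]$ is controlled by Proposition~\ref{lem:forKol-low}. Once this bound is secured, the remainder is a verbatim repetition of the $[0,T]$ argument of Theorem~\ref{thm:eps-est-positive-init} with the initial time replaced by $\delta$.
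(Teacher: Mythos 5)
Your proposal is correct and follows the route the paper intends: the paper gives no written proof of Proposition~\ref{prop:eps-est-delta}, only the remark that instantaneous positivity of $\cg_t^\e,\eff_t^\e$ on $[\delta,T]$ lets one repeat the argument of Theorem~\ref{thm:eps-est-positive-init} with the initial time shifted to $\delta$, which is exactly what you do. Your treatment of the one nontrivial point is in fact more careful than the paper's one-sentence justification: irreducibility of $N^\e$ and $L^\e$ only gives positivity for each fixed $\e$ (with constants that could degenerate like $e^{-1/\e}$, cf.\ Remark~\ref{rem:difficulty-eps}), and you correctly restore $\e$-uniformity on $[\delta,T]$ by comparison with the averaged dynamics via Theorems~\ref{thm:classical_averaging} and~\ref{prop:eff-to-aver-eps} together with Proposition~\ref{lem:forKol-low} --- the same mechanism the paper uses inside the proof of Theorem~\ref{thm:eps-est-positive-init}.
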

Note that the improved error estimate~\eqref{eq:quantEstEpsImproved} in Theorem~\ref{thm:eps-est-positive-init} cannot be applied straightforwardly on the interval $[\delta,T]$ since this would require $\RelEnt(\cg_\delta^\e \vert \eta_\delta^\e) \leq C_0 \e$.
In general this cannot be guaranteed by the control on the initial data alone.

\begin{rem}\label{rem:difficulty-eps}
We point out that the general error estimate derived in Theorem \ref{thm:RelEntEst} does not require positivity of the initial data, which is one of the main assumptions in Theorem \ref{thm:eps-est-positive-init}.
The assumption of positive initial data is required to obtain an $\e$-independent bound~\eqref{eq:g_t_integral_bound_eps} on $g_t$ in the proof of Theorem \ref{thm:eps-est-positive-init}.
Specifically, we need to control the decay of $\cg^\e_t$ to zero as $t \rightarrow 0$ independently of $\e$ to obtain an $\e$-independent bound on $\int_0^\delta g_t^2 \,dt$, i.e.\  we require an estimate of the form $\cg_t^\e(y) \geq c t^N$ for $c > 0$ and $N \in \N$ independent of $\e$.
In the proof of Theorem \ref{thm:RelEntEst} this estimate is derived by applying Proposition~\ref{lem:forKol-low} to the full solution $\mu_t$ and then using $\cg_t = \xi_\#\mu_t$.
However, proceeding like this in the $\e$-dependent case leads to a constant $c \sim e^{-1/\e}$ since $\max_{x \in \X} -L^\e(x,x) \sim \e^{-1}$ as $\e \rightarrow 0$.

Alternatively, we might use the fact that the solution to~\eqref{eq:cg-eps} can be written as
\begin{equation*}
    \cg_t^\e = \exp\left(\int_0^t \hat{L}^\e_s \,ds\right) \cg_0^\e.
\end{equation*}
Using the definition of $\hat{L}^\e_s$ and the definition of $L^\e$~\eqref{eq:CG-res-gen} we obtain
\begin{equation*}
    \left(\int_0^t \hat{L}^\e_s \,ds\right)(y,1-y) = \sum_{z_1,z_2 \in \Z} \sG_{y,1-y}(z_1,z_2) \int_0^t \mu_s^\e(z_1 \vert y)\,ds
\end{equation*}
and thus it is sufficient to control the decay of $\int_0^t \mu_s^\e(z \vert y)\,ds$ as $t \rightarrow 0$ for small $\e > 0$.
We point out, that by using \cite[Lemma 3.2 \& Lemma 3.4]{HilderPeletierSharmaTse20} it follows that $\mu_t^\e(\cdot\vert y)$ converges narrowly to $\rho_y$ in the space of measures on $[0,T]\times \Z$, that is $\int_0^T f_t \mu_t^\e(z \vert y) \,dt \rightarrow \int_0^T f_t \rho_y(z) \,dt$ for all $f \in C([0,T];\R)$ as $\e \rightarrow 0$.
However, this type of convergence seems to be insufficient to control $\int_0^t \mu_s^\e(z \vert y)\,ds$ well enough for our purposes.
Nevertheless, this hints that the restriction to positive initial data in Theorem \ref{thm:eps-est-positive-init} is purely technical.
The technical nature of this restriction is further fostered by the numerical experiments in Section \ref{sec:numericalExp}, where choosing non-negative initial data does not affect the convergence rate.
\end{rem}

\subsection{Error estimate for reversible processes}\label{sec:eps-rev}

So far we have worked with general continuous-time Markov chains without making any assumption regarding the reversibility of the underlying stochastic process. In this section, we work with reversible Markov chains, i.e.\ chains that satisfy the detailed balance condition
\begin{equation}\label{eq:det-bal}
\rho(x_1) L(x_1,x_2) = \rho(x_2) L(x_2,x_1),
\end{equation}
for any $x_1,x_2 \in \X$, where $L$ is an irreducible generator and $\rho$ is the corresponding stationary measure. The log-Sobolev inequality has been a central ingredient in our analysis so far, and it turns out that there are natural sufficient conditions for this inequality to hold in the reversible setting (see for instance~\cite{FathiMaas16,erbarFathi18}). In particular we can guarantee that this inequality holds for the reversible generator used in the numerical experiments discussed in Section \ref{sec:numericalExp}.

The reversible setting has two distinct features in the context of effective dynamics. First, the conditional stationary measure is independent of $\e$ (see Lemma~\ref{lem:reversibility-eps-indep} below) which makes the quantitative result considerably simpler as the log-Sobolev inequality does not depend on $\e$. Second, the $\e$-independent conditional stationary measure $\stat(\cdot|y)$ (recall Theorem~\ref{thm:classical_averaging} for definition) is the same as $\stat_y(\cdot)$ and therefore the averaged and effective dynamics are the same (see Corollary~\ref{cor:eff=aver}). Consequently our techniques provide new error estimates and insights into averaging problems. 

\begin{lem}\label{lem:reversibility-eps-indep}
Let $\rho_y \in \P(\Z)$ be the stationary measure of $Q_y$ for $y \in \Y$.
If $L^\e$ is reversible for any $\e > 0$, the conditional stationary measure satisfies $\rho^\e(\cdot \vert y) = \rho_y(\cdot)$ for any $\e>0$. 
Furthermore, the marginal stationary measure $\xi_\#\rho^\e = \pi$ for any $\e>0$, where $\pi\in\P(\Y)$ solves 
\begin{equation}\label{def:N-rev}
    N^T \pi = 0 \ \  \text{ with }  \ \ N\coloneqq\begin{pmatrix}
-\lambda_0 & \lambda_0  \\ \lambda_1 & -\lambda_1
\end{pmatrix}, \ \ 
\lambda_y \coloneqq \sumsum_{z_1,z_2\in\Z} \stat_y(z_1) \sG_{y,1-y}(z_1,z_2),
\end{equation}	  
i.e.\ $\pi$ is the stationary measure corresponding to $N$. Consequently, the stationary measure corresponding to $L^\e$ is $\e$-independent, i.e.\ $(L^\e)^T\stat=0$ for any $\e>0$ where $\stat((y,z))=\pi_y \stat_y(z)$.
\end{lem}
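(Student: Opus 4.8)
The plan is to exploit the detailed balance condition~\eqref{eq:det-bal} for $L^\e$ separately for pairs of states lying in the same macro-state and for pairs in different macro-states, and in each case to extract a detailed-balance relation for the relevant lower-dimensional object. Throughout I use that irreducibility of $L^\e$ guarantees $\rho^\e\in\P_+(\X)$, so that all marginal and conditional measures are strictly positive and the decomposition $\rho^\e((y,z))=\rho^\e(z|y)\,\xi_\#\rho^\e(y)$ from~\eqref{def:marg-cond} is well-defined.

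First I would treat two states $(y,z_1)$ and $(y,z_2)$ within the same macro-state $y$. By the block structure~\eqref{eq:CG-res-gen} together with~\eqref{eq-def:D}, the off-diagonal entry is $L^\e((y,z_1),(y,z_2)) = \e^{-1}Q_y(z_1,z_2)$, so reversibility reads $\rho^\e((y,z_1))Q_y(z_1,z_2) = \rho^\e((y,z_2))Q_y(z_2,z_1)$. Dividing by $\xi_\#\rho^\e(y)>0$ yields detailed balance for the conditional measure with respect to $Q_y$, namely $\rho^\e(z_1|y)Q_y(z_1,z_2) = \rho^\e(z_2|y)Q_y(z_2,z_1)$. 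Summing over $z_1$ and using that $Q_y$ is a generator (rows sum to zero) gives $Q_y^T\rho^\e(\cdot|y)=0$; by irreducibility of $Q_y$ the stationary measure is unique, whence $\rho^\e(\cdot|y)=\rho_y(\cdot)$, which is manifestly $\e$-independent.

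Next I would compare states $(y,z_1)$ and $(1-y,z_2)$ in different macro-states, where $L^\e((y,z_1),(1-y,z_2)) = \sG_{y,1-y}(z_1,z_2)$. Reversibility, combined with the decomposition and the identity $\rho^\e(\cdot|y)=\rho_y$ just established, gives $\rho_y(z_1)\,\xi_\#\rho^\e(y)\,\sG_{y,1-y}(z_1,z_2) = \rho_{1-y}(z_2)\,\xi_\#\rho^\e(1-y)\,\sG_{1-y,y}(z_2,z_1)$. Summing over $z_1,z_2$ and recognising the definition of $\lambda_y$ in~\eqref{def:N-rev} on both sides (after the relabelling $z_1\leftrightarrow z_2$ on the right) collapses this to $\xi_\#\rho^\e(y)\,\lambda_y = \xi_\#\rho^\e(1-y)\,\lambda_{1-y}$, which is precisely detailed balance for the $2\times2$ generator $N$. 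As before this implies $N^T\xi_\#\rho^\e = 0$, and since $N$ is irreducible (so that $\pi$ is its unique normalised stationary measure, arguing as in Lemma~\ref{lem:eff-irred}) we conclude $\xi_\#\rho^\e=\pi$ independently of $\e$.

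Finally, combining the two identities through~\eqref{def:marg-cond} gives $\rho^\e((y,z)) = \rho^\e(z|y)\,\xi_\#\rho^\e(y) = \pi_y\,\rho_y(z)$ for every $\e$; since each $\rho^\e$ is by definition the stationary measure of $L^\e$ and all coincide with this $\e$-independent measure $\rho$, the claim $(L^\e)^T\rho=0$ for all $\e>0$ follows. The main technical point is the passage from the cross-macro-state reversibility relation to detailed balance for $N$: it requires summing out the micro-states carefully and correctly matching the relabelled double sum on the right-hand side with the definition of $\lambda_{1-y}$, and it relies essentially on the $\e$-independence of the conditional measures established in the first step.
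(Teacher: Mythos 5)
Your proof is correct. The first half coincides with the paper's argument in all but presentation: the paper phrases reversibility as self-adjointness of $L^\e$ in $L^2(\X,\stat^\e)$ and tests against $\delta_{(y,z_1)}$ and $\chi_{\{y\}\times\Z}$, which after cancelling the $D_y$ terms is exactly your entrywise within-block detailed balance summed over $z_1$; both yield $Q_y^T\stat^\e(\cdot|y)=0$ and hence $\stat^\e(\cdot|y)=\stat_y$ by irreducibility of $Q_y$. For the marginal, however, you take a genuinely different route: the paper goes back to the stationarity equation $(L^\e)^T\stat^\e=0$ (repeating the computation of~\eqref{eq:effToAveraged_aux1}), substitutes $\stat^\e(\cdot|y)=\stat_y$, and sums over the micro-states to obtain $N^T\xi_\#\stat^\e=0$ directly; you instead apply the cross-block detailed balance of $L^\e$, sum out $z_1,z_2$, and obtain the \emph{detailed-balance} relation $\xi_\#\stat^\e(y)\lambda_y=\xi_\#\stat^\e(1-y)\lambda_{1-y}$ for $N$, from which stationarity follows. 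Your version proves a slightly stronger intermediate fact (reversibility of the marginal chain, not just stationarity) at the cost of needing the relabelling $z_1\leftrightarrow z_2$ to recognise $\lambda_{1-y}$, while the paper's version reuses a computation already carried out in the proof of Theorem~\ref{prop:eff-to-aver-eps}; both correctly invoke irreducibility of $N$ (which requires $\lambda_y>0$, guaranteed by irreducibility of $L^\e$ and positivity of $\stat_y$) to conclude $\xi_\#\stat^\e=\pi$, and the final factorisation $\stat((y,z))=\pi_y\stat_y(z)$ is identical.
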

\begin{proof}
Fix $\e>0$ and let $\rho^\e$ be the stationary measure of $L^\e$. Since $L^\e$ satisfies the detailed balance condition, it is a self-adjoint operator in $L^2(\X,\rho^\e)$, i.e.\ for any $f_1,f_2 \in L^2(\X,\rho^\e)$ we have 
\begin{equation*}
    \sumsum_{x_1,x_2 \in \X} f_1(x_1) L^\e(x_1,x_2) f_2(x_2) \rho^\e(x_1) = \sumsum_{x_1,x_2 \in \X} f_1(x_1) L^\e(x_2,x_1) f_2(x_2) \rho^\e(x_2).
\end{equation*}
Choosing $f_2 = \delta_{(y_1,z_1)}$ and $f_1 = \chi_{\{y_1\} \times \Z}$ for any $(y_1,z_1) \in \X$, where $\delta_x$ is the Dirac delta located at $x \in \X$ and $\chi_{A}$ is the characteristic function of $A \subseteq \X$, we arrive at 
\begin{align*}
    \sum_{z_2 \in \Z} L^\e((y_1,z_1),(y_1,z_2)) \rho^\e((y_1,z_1)) &= \sum_{z_2 \in \Z} L^\e((y_1,z_2),(y_1,z_1)) \rho^\e((y_1,z_2)) \\
    \Longleftrightarrow \sum_{z_2 \in \Z} Q_{y_1}(z_1,z_2) \xi_\# \rho^\e(y_1) \rho^\e(z_1 \vert y_1) &= \sum_{z_2 \in \Z} Q_{y_1}(z_2, z_1) \xi_\# \rho^\e(y_1) \rho^\e(z_2 \vert y_1),
\end{align*} 
where the $D_y$ terms cancel from both sides. Since $\sum_{z_2}Q_{y_1}(z_1,z_2)=0$, the left-hand side of the second equality above is zero. As $\stat^\e$ is a positive probability measure (it is the stationary measure of an irreducible generator), $\xi_\#\stat^\e(y_1)>0$ for every $y_1\in\Y$, and therefore $Q^T_{y_1} \stat^\e(\cdot \vert y_1) = 0$. Consequently $\stat^\e(\cdot \vert y)=\stat_y(\cdot )$ for any $y\in\Y$ since the irreducibility of $Q_y$ implies that it has a unique stationary measure. 

Repeating the calculations as in~\eqref{eq:effToAveraged_aux1} and $\stat^\e(\cdot|y)=\stat_y(\cdot)$  it follows that for any $x=(y,z)\in\X$ we find 
\begin{equation*}
0= \bigl((L^\e)^T\stat^\e\bigr)(x) \Longleftrightarrow  -\lambda_y \xi_\#\stat^\e(y) + \lambda_{1-y} \xi_\#\stat^\e(1-y)= 0  \Longleftrightarrow
\begin{pmatrix}-\lambda_0 & \lambda_1 \\ \lambda_0 & -\lambda_1 \end{pmatrix}\begin{pmatrix} \xi_\# \stat^\e(0) \\ \xi_\# \stat^\e(1)\end{pmatrix} = 0,
\end{equation*}
where $\lambda_y$ for $y\in\Y=\{0,1\}$ are defined in~\eqref{def:N-rev}, i.e.\ $N^T \xi_\#\stat^\e=0$ for every $\e>0$. Since $N$ is $\e$-independent and admits a unique stationary measure, it follows that $\xi_\#\stat^\e=\pi\in\P(\Y)$ where $N^T\pi=0$. 
\end{proof}

A straightforward implication of Lemma~\ref{lem:reversibility-eps-indep} is that the effective and averaged dynamics are the same in this setting. 
\begin{cor}\label{cor:eff=aver}
Assume that the original generator $L^\e$ is reversible for every $\e>0$. Then the effective dynamics $t\mapsto\eta_t\in\P(\Y)$ evolves according to the $\e$-independent  generator $N\in\R^{|\Y|\times|\Y|}$ defined in~\eqref{def:N-rev}, i.e.\ 
\begin{equation}\label{eq:rev-eff}
\partial_t\eff_t = N^T\eff_t. 
\end{equation}
In particular, the effective and averaged dynamics~\eqref{eq:aver-gen} have the same evolution.
\end{cor}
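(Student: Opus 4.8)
The plan is to show directly that the $\e$-dependent effective generator $N^\e$ from~\eqref{eq:eff-eps} collapses to the $\e$-independent matrix $N$ of~\eqref{def:N-rev} once reversibility is imposed, and then to observe that this $N$ coincides by construction with the averaged generator $L^{\av}$ of~\eqref{eq:aver-gen}.

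First I would recall the computation already carried out in the proof of Theorem~\ref{prop:eff-to-aver-eps}. There, using the block structure~\eqref{eq:CG-res-gen} of $L^\e$ and the fact that $\xi$ projects onto the slow variable (so that $\Lambda_y = \{y\}\times\Z$), it was shown that the fast part $\tfrac1\e Q$ contributes nothing to the off-diagonal entries of $N^\e$: the cross-block entries of $Q$ vanish, while within a macro-state $\sum_{z_2\in\Z} Q_y(z_1,z_2)=0$ since $Q_y$ is a generator (see~\eqref{eq:eps-Q-effect}). Consequently
\begin{equation*}
N^\e(y,1-y) = \sumsum_{z_1,z_2\in\Z} \stat^\e(z_1\vert y)\,\sG_{y,1-y}(z_1,z_2).
\end{equation*}

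Next I would invoke Lemma~\ref{lem:reversibility-eps-indep}, which in the reversible setting gives $\stat^\e(\cdot\vert y)=\stat_y(\cdot)$ for every $\e>0$. Substituting this identity into the expression above yields $N^\e(y,1-y)=\lambda_y$, exactly the off-diagonal entry of $N$ in~\eqref{def:N-rev}, and crucially this value no longer depends on $\e$. The diagonal entries are then fixed by the generator property $\sum_{y_2} N^\e(y,y_2)=0$ (equivalently via the definition~\eqref{eq-def:D} of $D_y$), giving $N^\e(y,y)=-\lambda_y$. Hence $N^\e=N$ for all $\e>0$, so the effective dynamics~\eqref{eq:eff-eps} reduces to $\partial_t\eff_t=N^T\eff_t$ with the $\e$-independent generator $N$, as claimed in~\eqref{eq:rev-eff}.

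Finally, comparing~\eqref{def:N-rev} with~\eqref{eq:aver-gen} shows that the two matrices are literally identical (same $\lambda_0,\lambda_1$), so the effective and averaged dynamics coincide. I do not anticipate any genuine obstacle here: the statement is a direct corollary of Lemma~\ref{lem:reversibility-eps-indep} combined with the already-established simplification of $N^\e$. The only point requiring minor care is the treatment of the diagonal entries, i.e.\ checking that the $D_y$ contribution is consistent with the generator property, but this is routine.
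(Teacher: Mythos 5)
Your argument is correct and matches the paper's (very brief) proof, which likewise reduces the corollary to the explicit rewriting of the effective generator — the cancellation of the $\tfrac1\e Q$ contribution as in~\eqref{eq:eps-Q-effect} together with the identity $\stat^\e(\cdot\vert y)=\stat_y(\cdot)$ from Lemma~\ref{lem:reversibility-eps-indep}. Your handling of the diagonal entries via the generator property is the same routine check the paper implicitly relies on, so there is nothing to add.
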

The proof follows by explicitly rewriting the effective generator~\eqref{eq:eff-eps} as in the proof of Lemma~\ref{lem:reversibility-eps-indep}.  

Since the conditional stationary measure is $\e$-independent by Lemma~\ref{lem:reversibility-eps-indep}, the log-Sobolev inequality \eqref{eq:LSIeps} in the reversible setting is $\e$-independent and reads
\begin{equation}	\label{eq:LSI-esp-reversible}
\RelEnt(\nu \vert \rho_y) \leq \dfrac{1}{\alpha_{\LSI}} \RF_{Q_y}(\nu \vert \rho_y),
\end{equation}
for $\nu\in\P(\Z)$. We now state the analogue of Theorem~\ref{thm:eps-est-positive-init} in the reversible setting. 

\begin{prop}\label{prop:convergenceRateReversible}
Let $L^\e$ be a reversible, irreducible generator of the form~\eqref{eq:CG-res-gen}, and $\cg^\e, \eta \in C^1([0,T];\P(\Y))$ be the solutions to corresponding coarse-grained dynamics \eqref{eq:cg-eps} and the effective dynamics \eqref{eq:rev-eff} with initial data $\cg^\e_0$ and $\eta_0$ respectively. Assume that
\begin{enumerate}[label=({B}\arabic*)]
\item The initial data for coarse-grained and effective dynamics is positive, i.e.\ there exists $c_0>0$ independent of $\e$ such that $\cg_0^\e(y),\eff_0(y) > c_0$ for any $y\in\Y$. Furthermore the initial data $\mu^\e_0, \, \cg_0^\e$ converge in $\P(\X),\, \P(\Y)$ respectively as $\e\rightarrow 0$.
\item For any $y\in\Y$ and $\nu\in\P(\Z)$, the family of conditional stationary measures $\stat_y\in\P(\Y)$ satisfies~\eqref{eq:LSI-esp-reversible}.
\end{enumerate}
Then there exists $C>0$ such that for any $t\in [0,T]$ and $\e\in(0,\e_0)$
\begin{equation*}
    \sup_{t \in [0,T]} \RelEnt(\cg_t^\e \vert \eta_t) \leq 2 \RelEnt(\cg_0^\e \vert \eta_0) + C \dfrac{\e T}{\alpha_{\LSI}} \left[\RelEnt(\mu_0^\e \vert \rho^\e) - \RelEnt(\mu_T^\e \vert \rho^\e)\right].
\end{equation*}
If additionally there exists $C_0 < \infty$ independent of $\e$ such that for any $\e \in (0,\e_0)$ we have $\RelEnt(\cg_0^\e \vert \eta_0) \leq C_0\e$, then there exists a constant $D=D(T)$ independent of $\e$ and $\alpha_{\LSI}$ such that 
\begin{equation*}
    \sup_{t \in [0,T]} \RelEnt(\cg_t^\e \vert \eta_t) \leq \biggl( 2C_0+\frac{D}{\alpha_{\LSI}}\biggr) \eps.
\end{equation*}
\end{prop}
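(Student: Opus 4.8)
The plan is to mirror the proof of Theorem~\ref{thm:eps-est-positive-init}, exploiting the three simplifications that reversibility provides through Lemma~\ref{lem:reversibility-eps-indep} and Corollary~\ref{cor:eff=aver}: the conditional stationary measure equals the $\e$-independent measure $\stat_y$, the effective generator $N$ in~\eqref{eq:rev-eff} and hence the effective dynamics $\eff$ are $\e$-independent, and the log-Sobolev inequality~\eqref{eq:LSI-esp-reversible} no longer carries any $\e$-dependence. In particular the uniformity-in-$\e$ part of assumption~\ref{ass-quantres2} in Theorem~\ref{thm:eps-est-positive-init} is now automatic, and the argument proceeds essentially verbatim.

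First I would reproduce the entropy-dissipation identity of Lemma~\ref{lem:pre-FIR} for the pair $(\cg^\e,\eff)$, drop the non-negative Fisher-information term via Lemma~\ref{lem:pos-FI}, and apply Cauchy--Schwarz in time together with Jensen's inequality on $\Y$ to arrive at
\begin{equation*}
\RelEnt(\cg_t^\e|\eff_t) - \RelEnt(\cg_0^\e|\eff_0) \leq 2\|g\|_{L^2(0,T)}\Bigl(\int_0^t\sum_{y\in\Y}\|\mu_s^\e(\cdot|y)-\stat_y\|_{\TV}^2\,\cg_s^\e(y)\,ds\Bigr)^{1/2},
\end{equation*}
with $g$ as in~\eqref{def:C_t}; the two factors are then estimated separately.

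For $\|g\|_{L^2(0,T)}$ I would follow the reversible analogue of~\eqref{eq:g_t_integral_bound_eps}: the block structure~\eqref{eq:CG-res-gen} reduces $g_t$ to $2\|\sG\|_\infty\sup_{y\in\Y}|\log\cg_t^\e(y)-\log\eff_t(y)|$, and by Theorem~\ref{thm:classical_averaging} and Theorem~\ref{prop:eff-to-aver-eps} (with $\eff$ already equal to the averaged dynamics by Corollary~\ref{cor:eff=aver}) both $\cg^\e$ and $\eff$ converge uniformly in time to a solution of the irreducible averaged dynamics with positive initial data. Proposition~\ref{lem:forKol-low} then supplies an $\e$-uniform positive lower bound on $\cg_t^\e$ and $\eff_t$ over $[0,T]$, so that the logarithm is uniformly Lipschitz and $\|g\|_{L^2(0,T)}\leq c\sqrt{T}\sup_{t\in[0,T]}\|\cg_t^\e-\eff_t\|_{\TV}$. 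For the integral factor I would invoke the $\e$-independent inequality~\eqref{eq:LSI-esp-reversible} and the linearity of $\RF_M$ in $M$, which produces the decisive factor of $\e$ via $\RF_{Q_y}=\e\,\RF_{\e^{-1}Q_y}$; since $\e^{-1}Q_y$ is precisely the restriction of $L^\e$ to $\{y\}\times\Z$ and $\stat((y,z))=\pi_y\stat_y(z)$ is the $\e$-independent stationary measure, the level-set summation of Theorem~\ref{thm:RelEntEst} and the dissipation identity~\eqref{eq:ent-decay} yield the upper bound $\tfrac{2\e}{\alpha_{\LSI}}[\RelEnt(\mu_0^\e|\stat)-\RelEnt(\mu_t^\e|\stat)]$. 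Combining the two factors gives the first displayed estimate, and the improved bound then follows from the extra hypothesis $\RelEnt(\cg_0^\e|\eff_0)\leq c_0\e$ by the same exponent bootstrap $\alpha_{n+1}=(\alpha_n+1)/2$ used in Theorem~\ref{thm:eps-est-positive-init}.

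The main obstacle is, exactly as in the general $\e$-dependent case, the $\e$-uniform control of $\|g\|_{L^2(0,T)}$, i.e. securing a positive lower bound on $\cg_t^\e$ over all of $[0,T]$ including small times. This is why positive initial data is imposed in~(B1): applying Proposition~\ref{lem:forKol-low} directly to $\mu_t^\e$ would give a bound degenerating like $e^{-1/\e}$, since $\max_{x}|L^\e(x,x)|\sim\e^{-1}$ (see Remark~\ref{rem:difficulty-eps}), so the positivity must instead be transported from the averaged dynamics through the uniform convergence of $\cg^\e$ and $\eff$. Reversibility lightens the remaining bookkeeping, since the $\e$-independence of $\eff$ makes its positivity immediate and~\eqref{eq:LSI-esp-reversible} holds with an $\e$-independent constant without any further uniformity assumption.
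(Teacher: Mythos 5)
Your proposal is correct and follows essentially the same route the paper intends: the paper gives no separate proof for this proposition, treating it as the direct analogue of Theorem~\ref{thm:eps-est-positive-init} obtained by rerunning that proof with the reversibility simplifications of Lemma~\ref{lem:reversibility-eps-indep} and Corollary~\ref{cor:eff=aver} (the $\e$-independent effective dynamics, conditional stationary measures $\stat_y$, and log-Sobolev constant), which is exactly what you do. Your identification of the $\e$-uniform lower bound on $\cg_t^\e$ via convergence to the averaged dynamics as the key technical point, and of the factor $\e$ entering through $\RF_{Q_y}=\e\,\RF_{\e^{-1}Q_y}$, matches the paper's argument precisely.
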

Note that in the estimates above both the effective dynamics and stationary measure are $\e$-independent. 

In the remainder of this section we discuss sufficient conditions on the fast-generator $Q_y\in\R^{|\Z|\times|\Z|}$ such that the conditional stationary measure $\stat_y$ satisfies the log-Sobolev inequality~\eqref{eq:LSI-esp-reversible}. First of all, note that  the reversibility of $L^\e$ implies the reversibility of $Q_y$ for all $y \in \Y$ since choosing $x_1 = (y,z_1)$ and $x_2 = (y,z_2)$ in the detailed-balance condition~\eqref{eq:det-bal} for $L^\e$, using the form~\eqref{eq:CG-res-gen} of $L^\e$ and writing the stationary measure as $\rho((y,z)) = \xi_\#\rho(y) \rho^\e(z \vert y)$ (recall Lemma~\ref{lem:reversibility-eps-indep}) yields
\begin{align*}
\xi_\#\rho(y) \rho_y(z_1 \vert y) Q_y(z_1,z_2) = \xi_\#\rho^\e(y) \rho^\e(z_2 \vert y) Q_y(z_2,z_1).
\end{align*}
Note that $\xi_\#\rho(y) > 0$ for all $y \in \Y$ as $\stat$ is a positive probability measure and since $z_1, z_2 \in \Z$ are arbitrary it follows that $Q_y$ is in detailed balance.
Therefore we can apply the framework of \cite{erbarMaas12,FathiMaas16,erbarFathi18}, where the validity of the estimate \eqref{eq:LSI-esp-reversible} is related to the entropic Ricci curvature of the triplet $(\Z,Q_y,\rho_y)$ denoted by $\Ric(\Z,Q_y,\rho_y)$  (see \cite[Definition 1]{erbarMaas12} and \cite[Definition 2.1]{erbarFathi18} for precise definition). In particular, the following result holds. 

\begin{lem}\label{lem:LSIguaranteeRev}
Let $Q_y$ be reversible and assume that $\Ric(\Z,Q_y,\rho_y) \geq 0$ for all $y \in \Y$, then there exists an $\alpha_{\LSI} > 0$ such that the log-Sobolev inequality \eqref{eq:LSI-esp-reversible} holds.
\end{lem}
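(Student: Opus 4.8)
The plan is to reduce the statement to the entropic Ricci curvature framework of Erbar--Maas~\cite{erbarMaas12} and its refinement by Erbar--Fathi~\cite{erbarFathi18}, after checking that the structural hypotheses of those works are satisfied in our setting. The genuine content of the implication ``$\Ric(\Z,Q_y,\rho_y)\geq 0\Rightarrow$ log-Sobolev'' lives in the cited results; on our side essentially all that is required is a verification of hypotheses and a uniformisation over the finitely many level sets.

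First I would record that the triplet $(\Z,Q_y,\rho_y)$ meets the standing assumptions of~\cite{erbarMaas12,erbarFathi18}: the state space $\Z$ is finite, $Q_y$ is irreducible (assumed in the setup together with the irreducibility of $L^\e$), and $Q_y$ is reversible with respect to $\rho_y$ --- the latter having been established in the discussion preceding the statement, starting from the detailed-balance condition~\eqref{eq:det-bal} for $L^\e$. Consequently the entropic Ricci curvature $\Ric(\Z,Q_y,\rho_y)$ is well defined in the sense of~\cite[Definition 1]{erbarMaas12}, and the quantity $\Ric(\Z,Q_y,\rho_y)\geq 0$ is exactly the hypothesis of the lemma.

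Next I would invoke the relevant implication. If the curvature were strictly positive, $\Ric\geq\kappa>0$, then~\cite{erbarMaas12} already delivers the modified log-Sobolev inequality~\eqref{eq:LSI-esp-reversible} with constant proportional to $\kappa$; but here we only assume $\Ric\geq 0$, so that direct route is unavailable and the refinement of~\cite{erbarFathi18} is needed. The key structural fact is that, being finite, irreducible and reversible, $Q_y$ automatically admits a strictly positive spectral gap $\lambda_y>0$ (equivalently a Poincar\'e inequality), since $Q_y$ is self-adjoint in $L^2(\Z,\rho_y)$ with a simple zero eigenvalue. Erbar--Fathi show that non-negative entropic Ricci curvature upgrades this Poincar\'e inequality to the modified log-Sobolev inequality: informally, $\Ric\geq 0$ furnishes a $\Wasser$-contraction (an $\mathrm{EVI}(0)$ estimate) for the gradient flow of the relative entropy, which, combined with the spectral gap and a lower bound on $\rho_y$, yields an entropy-dissipation estimate of the form~\eqref{eq:LSI-esp-reversible} with a positive constant $\alpha_{\LSI}^{(y)}$ depending only on $\lambda_y$ and the invariant measure $\rho_y$.

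Finally, since $\Y$ is finite, setting $\alpha_{\LSI}\coloneqq\min_{y\in\Y}\alpha_{\LSI}^{(y)}>0$ produces a single constant for which~\eqref{eq:LSI-esp-reversible} holds uniformly in $y\in\Y$, as required. The only real obstacle is that the crucial step ``$\Ric\geq 0\Rightarrow$ modified log-Sobolev'' is itself a substantial theorem: a self-contained argument would require setting up the discrete Benamou--Brenier metric on $\P(\Z)$, establishing geodesic convexity of the relative entropy, and carrying out the delicate bootstrapping from the borderline bound $\kappa=0$ --- which yields only convexity, not strict convexity, of the entropy along geodesics --- to a genuine dissipation inequality. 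I would therefore cite~\cite{erbarFathi18} for this step rather than reproduce it, the remainder being the routine verification and uniformisation described above.
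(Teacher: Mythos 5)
Your proposal is correct and follows essentially the same route as the paper: both reduce the statement to the entropic Ricci curvature framework of Erbar--Maas/Erbar--Fathi after checking reversibility and irreducibility of $Q_y$, invoke the cited implication ``$\Ric(\Z,Q_y,\rho_y)\geq 0 \Rightarrow$ modified log-Sobolev'', and uniformise over the finitely many $y\in\Y$ by taking a minimum. The only (minor) difference is in which auxiliary hypothesis of \cite{erbarFathi18} is verified: the paper checks that the diameter of $(\Z,d_\Wasser)$ is finite, via the comparison with the graph distance $d_{Q_y}$ and the finiteness of $\Z$, as required by their Theorem~6.1, whereas you check positivity of the spectral gap (the Buser-type route) --- both of which are automatic for a finite, irreducible, reversible chain.
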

\begin{proof}
We aim to apply \cite[Theorem 6.1]{erbarFathi18}, which guarantees the existence of an $\alpha_{\LSI} > 0$ if $\Ric(\Z,Q_y,\rho_y) \geq 0$ and the diameter of $(\X,d_\Wasser)$ is bounded. 
Here, $d_\Wasser$ is a distance on $\X$ defined in \cite[Section 2.4]{erbarFathi18} and is bounded from above by $d_{Q_y}$, up to a constant, with
\begin{align*}
    d_{Q_y}(z_1,z_2) = \inf \Biggl\{\sum_{i = 1}^{n-1} \dfrac{1}{\sqrt{\min(Q_y(\tilde{z}_i,\tilde{z}_{i+1}), Q_y({\tilde{z}_{i+1},\tilde{z}_i}))}}\Biggr\},
\end{align*}
where the infimum is taken over all sequences $\tilde{z}_1 = z_1, \tilde{z}_2,\dots,\tilde{z}_n = z_2$ such that $Q_y(\tilde{z}_i,\tilde{z}_{i+1}) > 0$  (see \cite[Lemma 2.3]{erbarFathi18}).
Note that this also implies $Q_y(\tilde{z}_{i+1},\tilde{z}_i) > 0$ by reversibility and since $Q_y$ is irreducible such a sequence exists for all $z_1,z_2 \in \Z$.
Thus, the distance is well-defined.
In particular, since $\Z$ is finite the distance is also bounded, which proves the lemma.
\end{proof}

We now apply the above theory to the examples studied in the numerical experiments Section \ref{sec:numericalExp}.
With $\Z = \{0,\dots,n-1\}$, we set
\begin{equation}\label{eq:genCircle}
Q_y = \begin{pmatrix}
    d & r_+ & 0 & \cdots & 0 & r_- \\
    r_- & d & r_+ & & & 0\\
    0 & r_- & d & r_+ & & \vdots \\
    \vdots & & \ddots & \ddots & \ddots &  0 \\
    0 & & & r_- & d & r_+ \\
    r_+ & 0 & \cdots & 0 & r_- & d
\end{pmatrix} \in \R^{n \times n}
\end{equation}
for $y \in \Y = \{0,1\}$ with $r_+, r_- > 0$ and $d = -(r_+ + r_-)$.
Following \cite[Example 5.6]{erbarMaas12}, we obtain a mapping representation $(G,\tau)$ of $Q_y$ (see \cite[Definition 5.2]{erbarMaas12} for a definition) by defining $G = \{+,-\}$ with
\begin{align*}
+(z) &= z+1 \operatorname{mod} n, \\
-(z) &= z-1 \operatorname{mod} n
\end{align*}
and $\tau(z,+) = r_+$ and $\tau(z,-) = r_-$.
This satisfies \cite[Proposition 5.4]{erbarMaas12}, which yields that $\Ric(\Z,Q_y,\rho_y) \geq 0$ and thus, Lemma \ref{lem:LSIguaranteeRev} and in particular Theorem \ref{thm:eps-est-positive-init} apply.
For an explicit expression of the log-Sobolev constant see~\cite[Theorem 6.1]{erbarFathi18}.

\begin{rem}\label{rem:LSI_reversible}
The example \eqref{eq:genCircle} is rather specific as it models a birth-death process with periodic boundary conditions (i.e.~the first and last states are connected) and constant birth- and death-rates.
However, Lemma \ref{lem:LSIguaranteeRev} also holds for a far more general class of classical birth-death processes without periodic boundary conditions which have state-dependent birth- and death-rates under mild assumptions on the rates.
For the corresponding result we refer to \cite[Theorem 5.1]{mielke13}.
Note that the geodesic $\lambda_Q$-convexity provided in \cite{mielke13} implies a non-negative Ricci curvature bound if $\lambda_Q \geq 0$.
Moreover, it turns out that the assumptions of Lemma \ref{lem:LSIguaranteeRev} are also satisfied for a large class of other examples~\cite{erbarMaas12,FathiMaas16}, which includes a random walk on the discrete hypercube and the full graph $\{0,1\}^n$ as well as the birth-death processes on a countable state space.
\end{rem}

\subsection{Different choices for coarse-graining maps}\label{sec:choice_CG_map}
We conclude this section, by discussing how different choices of coarse-graining maps $\xi$ can lead to vastly different scaling behavior of the corresponding log-Sobolev constant. In view of the error estimates provided by our main Theorems \ref{thm:RelEntEst} and \ref{thm:eps-est-positive-init} this shows that the choice of coarse-graining map has to reflect the slow-fast structure of the problem in order to obtain a good error bound.

For $\varepsilon > 0$ we consider the process generated by
\begin{align*}
    L^\varepsilon = \begin{pmatrix}
        -\varepsilon^{-1} & \varepsilon^{-1} & 0 & 0 \\
        \varepsilon^{-1} & -(1 + \varepsilon^{-1}) & 1 & 0 \\
        0 & 1 & -(1+\varepsilon^{-1}) & \varepsilon^{-1} \\
        0 & 0 & \varepsilon^{-1} & -\varepsilon^{-1}
    \end{pmatrix}
\end{align*}
which models a process on a four-state state-space $\X = \{0,1,2,3\}$, where only neighbouring states are communicating and additionally, the jump rates alternate between $1$ and $\varepsilon^{-1}$. The graph of the process and the corresponding energy landscape are depicted in Figure \ref{fig:example_graph}. This generator is irreducible and reversible with the corresponding stationary measure $\rho(x) = 1/4$ for $x \in \X$ (uniform measure) for every $\varepsilon>0$.

\begin{figure}
    \centering
    \begin{tikzpicture}[line width = 0.3mm]
        \draw[domain=-3:3,samples=100,variable=\x,line width = 0.4pt] plot ({\x},{2*exp(-(0.5*\x)^2/(0.2)^2)+0.5*exp(-(0.5*\x+1)^2/(0.2)^2)+0.5*exp(-(0.5*\x-1)^2/(0.2)^2)});
  
        \node[circle, draw, fill=white] (A) at (-3,-0.75) {0};
        \node[circle, draw, fill=white] (B) at (-1,-0.75) {1};
        \node[circle, draw, fill=white] (C) at (1,-0.75) {2};
        \node[circle, draw, fill=white] (D) at (3,-0.75) {3};
  
        \draw (A) -- node[above] {$\varepsilon^{-1}$} (B);
        \draw (B) -- node[above] {$1$} (C);
        \draw (C) -- node[above] {$\varepsilon^{-1}$} (D);
    \end{tikzpicture}

    \caption{Graph of a birth-death process on a state space $\X = \{0,1,2,3\}$ where the jump rates alternate between $\varepsilon^{-1}$ and $1$. Above the graph is the plot of an energy landscape, which could generate such a process (also see Figure \ref{fig:energyLandscape}). Since the birth-death process considered here is reversible, this relation can be made  precise~\cite{Berglund11,Arnrich-et-al-12}. In particular, the energy barrier between states $1$ and $2$ is much higher than the barrier between $0$ and $1$, and $2$ and $3$ respectively. Note that a lower energy barrier corresponds to a faster jump rate $\varepsilon^{-1}$.}
    \label{fig:example_graph}
\end{figure}

Note that $L^\varepsilon$ can be written as
\begin{align*}
    L^\varepsilon= \dfrac{1}{\varepsilon} \begin{pmatrix}
        -1 & 1 & 0 & 0 \\
        1 & -1 & 0 & 0 \\
        0 & 0 & -1 & 1 \\
        0 & 0 & 1 & -1
    \end{pmatrix} + \begin{pmatrix}
        0 & 0 & 0 & 0 \\
        0 & -1 & 1 & 0 \\
        0 & 1 & -1 & 0 \\
        0 & 0 & 0 & 0
    \end{pmatrix}
\end{align*}
and therefore is of the form \eqref{eq:CG-res-gen} discussed in this multiscale section. Hence the first (and natural) choice for $\xi$ is to group the states $\{0,1\}$ and $\{2,3\}$ together, which can for example be achieved by
\begin{align*}
    \xi(x) = \left\lfloor \dfrac{x}{2} \right\rfloor,
\end{align*}
where $\lfloor \cdot \rfloor$ denotes rounding down to the next integer. This choice corresponds to grouping together states that can easily access each other. For this choice $\Y = \{0,1\}$ with $\Lambda_0 = \{0,1\}$ and $\Lambda_1 = \{2,3\}$.
Using $L^y$ for the restriction of $L^\varepsilon$ to the level set $\Lambda_y \times \Lambda_y$, we find 
\begin{align*}
    L^0 = L^1 = \dfrac{1}{\varepsilon} \begin{pmatrix}
        -1 & 1 \\
        1 & -1
    \end{pmatrix} = \dfrac{1}{\varepsilon} Q.
\end{align*}
Utilizing Remark \ref{rem:diagonal_elems_restriction} we have chosen the diagonal elements of $L^y$ such that the rows sum up to zero.
Applying Remark \ref{rem:LSI_reversible} to the generator $Q$ and using the linearity of the Fisher information $\RF_M$ with respect to $M$ implies the existence of $\alpha_{\LSI} > 0$ which satisfies
\begin{align}
    \RelEnt(\nu \vert \rho(\cdot | y)) \leq \dfrac{\varepsilon}{\alpha_{\LSI}} \RF_{\varepsilon^{-1}Q} (\nu \vert \rho(\cdot | y)).
    \label{eq:LSI_example1}
\end{align}
Therefore the log-Sobolev constant scales like $\varepsilon^{-1}$ and consequently the error $\RelEnt(\hat{\mu}_t^\varepsilon | \eta_t)$ decays at least like $\varepsilon$ in accordance to Theorem \ref{thm:eps-est-positive-init}.

Alternatively, we can group the states together like $\{0\}$, $\{1,2\}$ and $\{3\}$. This can for example be achieved by the coarse-graining map
\begin{align*}
    \xi(x) = \left\lceil \dfrac{x}{2} \right\rceil,
\end{align*}
where $\lceil \cdot \rceil$ denotes rounding up to the next integer. This choice yields $\Y = \{0,1,2\}$ and $\Lambda_0 = \{0\}$, $\Lambda_1 = \{1,2\}$ and $\Lambda_2 = \{3\}$. Since $\Lambda_1$ is the only level set with more than one element, it is sufficient to obtain a log-Sobolev inequality (recall Remark~\ref{rem:LSI-singleton})
\begin{align*}
    \RelEnt(\nu \vert \rho(\cdot | 1)) \leq \dfrac{1}{\alpha_{\LSI}} \RF_{L^1}(\nu \vert \rho(\cdot | 1)), \quad L^1 = \begin{pmatrix}
        -1 & 1 \\
        1 & -1
    \end{pmatrix}.
\end{align*}
Again according to Remark \ref{rem:LSI_reversible} such an $\alpha_{\LSI}$ exists. In fact, since $L^1 = Q$ we obtain the same $\alpha_{\LSI}$ as in \eqref{eq:LSI_example1}. Applying Theorem \ref{thm:RelEntEst} then gives an error estimate with a constant which is independent of $\varepsilon$ and in particular does not decay to zero as $\varepsilon \rightarrow 0$.
Hence, although our theory gives an error estimate for this choice of $\xi$ we obtain no decay for $\varepsilon \rightarrow 0$, which reflects that $\xi$ does not properly account for the scale separation present in the problem.

Finally, we may also choose $\xi$ with level sets $\{0,2\}$ and $\{1,3\}$. In this case we find that the restricted generator is the zero matrix $\tilde{Q} = 0$ on both level sets. In particular, the restricted generator is not irreducible. Using the definition of the Fisher information \eqref{def:FI} we find that $\RF_{\tilde{Q}}(\nu_1 \vert \nu_2) \equiv 0$. Since the relative entropy $\RelEnt(\nu_1 | \nu_2)$ is strictly positive if $\nu_1 \neq \nu_2$ the log-Sobolev inequality
\begin{align*}
    \RelEnt(\nu \vert \rho(\cdot | y)) \leq \dfrac{1}{\alpha_{\LSI}}\RF_{\tilde{Q}}(\nu \vert \rho(\cdot | y)) = 0
\end{align*}
cannot hold for any $\alpha_{\LSI} > 0$.
Hence we obtain no error estimate from our theory.

\begin{rem}
(i) The last example seems to suggest that irreducibility of the restricted generator $L^y$ to the level sets $\Lambda_y \times \Lambda_y$ is a necessary assumption for our theory to work. We point out that all relevant objects, such as coarse-grained and effective dynamics, are properly defined even if $L^y$ is not irreducible. The only requirement is the irreducibility of the \emph{full} generator $L$ to obtain the existence of a full stationary measure. However, as the above example shows, the log-Sobolev inequality might fail without the  irreducibility of $L^y$. Additionally, we remark that, although not strictly necessary, assuming the irreducibility of the $L^y$ in the $\varepsilon$-dependent setting of Section \ref{sec:eps-setup} leads to a more refined analysis.

(ii) Note that the list of coarse-graining maps discussed above is not exhaustive. However, a similar analysis is possible for other possible $\xi$ and we expect that the presented cases are representative of the entire class. In particular, it shows that there are `good' and `bad' choices for the coarse-graining map $\xi$ and that their quality is reflected in our error estimates.

(iii) We restricted the example to a state space with four states for illustrative purposes. However, it is straightforward to generalise the calculations to a birth-death process with alternating birth/death-rates on a general finite state space.
\end{rem}

\section{Numerical experiments}\label{sec:numericalExp}
In this section we numerically investigate the optimality of the theoretical convergence rate established in the previous sections (see Theorem~\ref{thm:eps-est-positive-init}, Proposition~\ref{prop:convergenceRateReversible}) on a simple illustrative example of a finite-state space \emph{birth-death} process. As opposed to the theoretical results, throughout this section we will work with $\e$-independent initial data for the full, coarse-grained and effective dynamics. Furthermore, we will choose the same initial data for both the coarse-grained and effective dynamics since in practice the effective dynamics is supposed to be an approximation for the coarse-grained dynamics. A simple consequence is that $\RelEnt(\cg_0 \vert \eta_0) = 0$ and therefore the additional assumptions~\eqref{ref:ass-improved-rate} on the initial data in Theorem~\ref{thm:eps-est-positive-init} for the convergence rate~\eqref{eq:quantEstEpsImproved} is automatically satisfied. 

We now describe our test example. Choose $\Y = \{0,1\}$ and $\Z = \{0,\dots,n-1\}$ for some $n \in \N$ and let $L^\e$ be an irreducible generator of the form~\eqref{eq:CG-res-gen} with $Q_0 = Q_1$ given in~\eqref{eq:genCircle} and
\begin{equation*}
\sG_{0,1}(z_1,z_2) = \sG_{1,0}(z_1,z_2) = \begin{dcases}
    1, & \text{ if } (z_1,z_2)=(n-1,0)  \text{ or } (z_1,z_2)=(0,n-1), \\
    0, & \text{ otherwise}.
\end{dcases}
\end{equation*}
In the context of Figure~\ref{fig:energyLandscape}, this particular choice of $\sG$ states that a particle can move to a different macro-state only if it is located at the end of the current macro-state. The stationary measure corresponding to $L^\e$ is is the uniform distribution on $\X$, i.e.\ $\rho^\e(x) = \rho(x) = \frac{1}{2n}$ for all $x \in \Y \times \Z$, and the stationary measure on the level sets $\rho_y\in\Lambda_y$ satisfies $\rho_y(z) = \frac1n$ for all $z \in \Lambda_y=\Z$. Furthermore, it is easily checked that the generator $L^\e$ is reversible and consequently the effective dynamics is independent of $\e$ (see Corollary~\ref{cor:eff=aver}). Recall from the discussion at the end of the previous Section \ref{sec:eps-rev} that the conditional stationary measures $\stat_y$ satisfies the log-Sobolev inequality~\eqref{eq:LSI-esp-reversible}. Therefore assuming that the initial data is positive, Proposition \ref{prop:convergenceRateReversible} yields (we choose $\alpha_{\LSI}$ to be a part of the constant here)
\begin{align*}
\sup_{t \in [0,T]} \RelEnt(\cg_t^\e \vert \eta_t) \leq C \e.
\end{align*}

\noindent\textbf{Numerical implementation.}
We now provide details regarding the numerical implementation. Throughout the experiments we fix an end-time $T\in(0,\infty)$, a finite set $\mathcal{E}$ of possible values for the scale-separation parameter $\e$, the number of micro-states in a macro-state $n=|\Z|$ and an initial probability measure $\mu_0 \in \P(\Y \times \Z)$ for the full dynamics (which need not be positive as will be discussed below). 

To calculate the coarse-grained dynamics~\eqref{eq:cg-eps} numerically we first compute the solution to the full system~\eqref{eq:eps-KolEq} and then extract the trajectory of the projected variable using the map $\xi : \Y \times \Z \rightarrow \Y$ with $\xi((y,z)) = y$. To compute the effective dynamics~\eqref{eq:eff-eps} we first compute the effective generator $N$~\eqref{def:N-rev} and then numerically solve the system~\eqref{eq:rev-eff} of ordinary differential equations. With these solutions $(\cg^\e)_{\e \in \mathcal{E}}$ and $\eta$ we then can compute $\max_{t \in [0,T]} \RelEnt(\cg_t^\e \vert \eta_t)$ for all $\e \in \mathcal{E}$. 

In the following remarks we comment on two issues pertaining to the choice of initial data and the measure used to quantify the error between the coarse-grained and effective dynamics. 
\begin{rem}[Choice of initial data]
We briefly discuss the choice of the initial measure $\mu_0$ for the full dynamics.
To remove the influence of the initial data from the results, it seems reasonable to generate a large number of initial data randomly from the uniform distribution on $(0,1)$ (which is normalised to obtain a probability measure), calculate the corresponding trajectories and then average the resulting convergence rates. However, this leads to issues since the solutions to the coarse-grained and effective dynamics are fairly close (see explanation below) and the resulting distance in relative entropy is quite small. Consequently, in most instances, the system has already converged (up to numerical error) and it's not possible to capture the convergence profile. Furthermore, this issue becomes worse if the system-size increases, while we expect that the opposite should be the case. 
    
To give an explanation for this phenomenon, we recall that both the coarse-grained dynamics $\mu^\e$ and the effective dynamics $\eta^\e$ converge to the same stationary measure $\xi_\# \rho$ (see Proposition \ref{prop:eff-inv}).
Furthermore, $\xi_\# \rho(y) = \frac12$ for all $y \in \Y$.
In our numerical setting, the initial data for both the coarse-grained and effective dynamics is $\xi_\# \mu_0$.
However if $n \in \N$ is sufficiently large (and thus the system-size is large) one expects that 
\begin{equation*}
    \xi_\# \mu_0(y) = \sum_{z \in \Z} \mu_0((y,z)) \approx \dfrac{1}{2}
\end{equation*}
with high probability due to the law of large numbers.
Hence the initial data is already close to the stationary state of both dynamics and therefore the corresponding solutions stay close as well. This also explains why the issue is more visible if the size of the system (i.e. $|\Z|=n$) is increased .

To solve this issue, we choose initial data which has most of its mass at only a small number of states (small in comparison to the system size).
\end{rem}

\begin{rem}[Measuring pointwise or uniform decay in time]\label{rem:point-unif}
We point out that measuring the \emph{uniform} decay of the relative-entropy distance in time, i.e.~$\max_{t \in [0,T]} \RelEnt(\cg_t^\e \vert \eta_t^\e)$, is crucial to obtain a result which is comparable to the theoretical result.
In particular, considering the pointwise decay in time, i.e.~$\RelEnt(\cg_T^\e \vert \eta_T^\e)$ for a fixed $T$, we typically find a quadratic decay in $\e$ as opposed to the theoretically predicted linear decay.
The reason behind this discrepancy is that the uniform decay is significantly slower than the pointwise decay as  the equilibration on the level sets speeds up as $\e \rightarrow 0$. In particular, this implies that the time at which the maximum of $t \mapsto \RelEnt(\cg_t^\e \vert \eta_t^\e)$ is attained decreases as $\e \rightarrow 0$ (see Figure \ref{fig:plotRelEntRev}), and consequently the pointwise error decays considerably faster. 
\end{rem}

\begin{figure}[t!]
\begin{minipage}{0.49\textwidth}
    \centering
    \include{relEntDifferentEps}
    \vspace{-0.8cm}
    \caption{Plot of $t \mapsto \RelEnt(\cg_t^\e \vert \eta_t)$ in scenario \ref{scenario1Rev} for $\e = 10^{0}, 10^{-1}, 10^{-2}$. As $\e$ decreases the maximum of $\RelEnt(\cg_t^\e \vert \eta_t)$ as well the time at which this maximum is attained decreases.}
    \label{fig:plotRelEntRev}
\end{minipage}
\hfill
\begin{minipage}{0.49\textwidth}
    \centering
    \vspace{-.25cm}
%
%
\definecolor{mycolor1}{rgb}{0.00000,0.44700,0.74100}%
\definecolor{mycolor2}{rgb}{0.85000,0.32500,0.09800}%
\definecolor{mycolor3}{rgb}{0.92900,0.69400,0.12500}%
\definecolor{mycolor4}{rgb}{0.49400,0.18400,0.55600}%
\begin{tikzpicture}

\begin{axis}[%
width=0.856\textwidth,
height=0.675\textwidth,
at={(0\textwidth,0\textwidth)},
scale only axis,
xmode=log,
xmin=0.0001,
xmax=1,
xminorticks=true,
xlabel style={font=\color{white!15!black}},
xlabel={$\varepsilon$},
ymode=log,
ymin=1e-07,
ymax=1,
yminorticks=true,
axis background/.style={fill=white},
legend style={legend cell align=left, align=left, draw=white!15!black},
legend pos = north west
]
\addplot [color=mycolor1, mark=x, mark options={solid, mycolor1}]
  table[row sep=crcr]{%
1	0.00324269554887448\\
0.1	0.00044978582997003\\
0.01	1.41436745705946e-05\\
0.001	8.13623820180849e-07\\
0.0001	3.92483265793044e-07\\
};
\addlegendentry{S1}

\addplot [color=mycolor2, mark=x, mark options={solid, mycolor2}]
  table[row sep=crcr]{%
1	0.0182681468624896\\
0.1	0.00136108223347771\\
0.01	2.17060979968132e-05\\
0.001	2.34555749126122e-06\\
0.0001	2.41969979541761e-06\\
};
\addlegendentry{S2}

\addplot [color=mycolor3, mark=x, mark options={solid, mycolor3}]
  table[row sep=crcr]{%
1	0.0303329124831197\\
0.1	0.00438651421403131\\
0.01	0.000121460516823787\\
0.001	5.85750255854074e-06\\
0.0001	3.38788413936707e-06\\
};
\addlegendentry{S3}

\addplot [color=mycolor4]
  table[row sep=crcr]{%
1	1\\
0.1	0.1\\
0.01	0.01\\
0.001	0.001\\
0.0001	0.0001\\
};
\addlegendentry{linear comp.}

\end{axis}
\end{tikzpicture}%
    \vspace{-0.7cm}
    \caption{Log-log plot of $\max_{t \in [0,T]} \RelEnt(\cg_t^\e \vert \eta_t)$ for \ref{scenario1Rev}--\ref{scenario3Rev}. The linear function $\e \mapsto \e$ is plotted for comparison.}
    \label{fig:plotConvergenceRev}
\end{minipage}
\end{figure}

\noindent\textbf{Numerical results.}
We choose $\mathcal{E} = \{10^0,10^{-1},10^{-2},10^{-3},10^{-4}\}$, $T = 20$ and $n = 10$.
We distinguish three different scenarios based on different choices for the birth and death rates in the generator $Q_y$ (recall~\eqref{eq:genCircle}) and the initial data $\mu_0$ for the full dynamics:
\begin{enumerate}[label=({S}\arabic*)]
\item \label{scenario1Rev} Set $r_+ = r_- = 1$ in $Q_y$ and $\mu_0 = \left(\frac{13}{10} + \frac{n}{5}\right)^{-1} \left(\delta_{(0,0)} + \frac{3}{10}\delta_{(1,0)} + \frac{1}{10}\right)$. 

\item \label{scenario2Rev} Set $r_+ = 1$ and $r_- = \frac{1}{10}$ and $\mu_0 = \left(\frac{13}{10} + \frac{n}{5}\right)^{-1} \left(\delta_{(0,0)} + \frac{3}{10}\delta_{(1,0)} + \frac{1}{10}\right)$. 

\item \label{scenario3Rev} Set $r_+ = r_- = 1$ in $Q_y$ and $\mu_0 = \frac{13}{10}\left(\delta_{(0,0)} + \frac{3}{10}\delta_{(1,0)}\right)$.
\end{enumerate}
Here $\delta_{(y,z)} \in \P(\Y \times \Z)$ is defined by $\delta_{(y,z)}(\tilde{y},\tilde{z}) = 1$ if $(y,z) = (\tilde{y},\tilde{z})$ and zero otherwise.
Both~\ref{scenario1Rev}--\ref{scenario2Rev} have positive initial data (although with different proportion of mass in the macro-states), with the key difference being in the birth and death rates. Specifically, within a macro-state~\ref{scenario1Rev} has an unbiased movement (i.e.\ symmetric $Q_y$), while in~\ref{scenario2Rev} the movement is biased to the right. In~\ref{scenario3Rev}, we choose an initial data which is concentrated only on two points in the entire state-space and therefore violates the the positivity assumption on the initial data in our theoretical results (see Proposition~\ref{prop:convergenceRateReversible}).

Figure~\ref{fig:plotRelEntRev} plots the pointwise-error profile in scenario~\ref{scenario1Rev} for varying $\e$ and clearly exhibits the behaviour discussed in Remark~\ref{rem:point-unif}, i.e.\ smaller values for $\e$ lead to increased mixing within the macro-states thereby leading to faster convergence of the coarse-grained to the ($\e$-independent) effective dynamics. Note that the error profiles start at zero since the same initial-data is chosen for the coarse-grained and effective dynamics. Similar behaviour is exhibited for the other scenarios as well. 

Figure~\ref{fig:plotConvergenceRev} plots $\max_{t \in [0,T]} \RelEnt(\cg_t^\e \vert \eta_t^\e)$ as a function of $\e$. In all the three scenarios we observe that this error decays to zero as $\e \rightarrow 0$, with a rate roughly equal to one. This is in line with our theoretical results in Theorem~\ref{thm:eps-est-positive-init} (in particular~\eqref{eq:quantEstEpsImproved}), from which we expect a linear decay of error as $\e\to 0$. Finally observe that the convergence in scenario~\ref{scenario3Rev}, which does not satisfy the assumption of positive initial data in Theorem~\ref{thm:eps-est-positive-init}, decays at the same rate as \ref{scenario1Rev}--\ref{scenario2Rev}. This provides further credence to Remark~\ref{rem:difficulty-eps}, wherein we state that the requirement of positive initial data is a purely technical assumption (in that we require it for the proof of the estimate) and we expect that the theoretical result should hold without this requirement.

\section{Conclusion and discussion}\label{sec:discussion}
In this article we provide a systematic first study of coarse-graining for linear continuous-time Markov chains on a finite state space. Inspired by related ideas for diffusions, we propose an effective dynamics which approximates the coarse-grained dynamics. Using entropy techniques and functional inequalities we provide a quantitative estimate on the coarse-graining error. We analyse the effective dynamics in the setup of multiscale averaging problems and provide modified error estimates. 

We now comment on some related issues.

\textbf{Interpreting the log-Sobolev assumption. }The assumption that the conditional stationary measure $\rho(\cdot|y)\in\P(\Lambda_y)$ satisfies the log-Sobolev inequality is the central ingredient used to prove the quantitative estimates in this article. This assumption becomes especially explicit in the reversible setting (see Section~\ref{sec:eps-rev}), where the conditional stationary measure is the stationary solution corresponding to the dynamics within a macro-state, i.e. $Q_y^T\rho(\cdot|y)=0$, and consequently any dynamics evolving according to the generator $Q_y$ converges exponentially fast to $\rho(\cdot|y)$ (recall Remark~\ref{rem:LSI}). Note that this exponential convergence does not imply that the conditional measure corresponding to the reference dynamics $\mu_t(\cdot|y)\in\P(\Lambda_y)$ converges exponentially fast to $\rho(\cdot|y)$ since $\partial_t\mu_t(\cdot|y) \neq Q^T_y\mu_t(\cdot|y)$ even in the reversible setting. However, from the very construction of effective dynamics we expect that the dynamics on the level-sets given by $\mu_t(\cdot|y)$ should converge considerably faster to $\stat(\cdot|y)$ as compared to the convergence of the full dynamics $\mu_t$ to $\stat$ (which is also exponential in our finite state-space setting). Therefore, even though this log-Sobolev assumption is crucial to proving the error estimates, it is completely unclear as to how it connects to the underlying dynamics on the level-sets.

\textbf{Comparison to diffusion processes. } As discussed in the introduction, effective dynamics have been studied for diffusion processes. In this setting with affine $\xi$ and identity diffusion, where the coarse-grained and effective dynamics $\cg_t,\eff_t$ solve Fokker-Planck equations, the quantitative estimates are of the type (see~\cite[Theorem 2.15]{DLPSS18} for details) 
\begin{equation*}
\RelEnt(\cg_t|\eff_t) \leq \RelEnt(\cg_0|\eff_0) + \frac{C}{\alpha^2_{\LSI}} \bigl[ \RelEnt(\mu_0|\rho)-\RelEnt(\mu_t|\rho) \bigr].
\end{equation*}
Note that these estimates hold for $t>0$ and have better scaling in terms of the log-Sobolev constant. This is due to the fact that in the diffusion setting, the log-Sobolev inequality implies the Talagrand inequality which can be used to considerably improve the quantitative bounds. This also leads to a linear scaling in $\e$ in the presence of scale-separation~\cite[Section 3.2]{HartmannNeureitherSharma20} (similar to linear scaling for well-prepared intial datum in Theorem~\ref{thm:eps-est-positive-init}).  However the Talagrand inequality fails in the setting of Markov jump processes (see~\cite[Lemma 8.1.7]{hilder17} for a detailed discussion) and therefore the techniques used in this work differ from the diffusion setting. Another crucial difference is that, while the effective dynamics converges to the averaged dynamics in fair generality for jump processes (recall Theorem~\ref{prop:eff-to-aver-eps}), so far similar results for diffusions only hold in the restrictive setting of linear diffusions where the explicit solution for the effective dynamics is known~\cite[Section 3.3]{HartmannNeureitherSharma20}.

\textbf{Issues with relative entropy. }
Recall that one of the main difficulties in proving an error estimate is to bound $g_t$ (see \eqref{def:C_t}) sufficiently well.
In the general case without explicit scale separation we provide such a bound in Lemma \ref{lem:bound-g}.
However, in the $\varepsilon$-dependent case we can only obtain a bound after assuming that the initial data is positive (see \ref{ass-quantres1}).
It is useful to point out that similar issues have also appeared in the construction of the FIR inequality on discrete state spaces in \cite[Proposition 2.11]{HilderPeletierSharmaTse20}.
In fact, it turns out that the classical FIR inequality using the standard Fisher information \eqref{def:FI} fails in the discrete setting if the assumption of positive initial data is dropped.
Although, as outlined in Remark \ref{rem:difficulty-eps}, we expect that error estimates can also be obtained without the restriction to positive initial data and that this assumption is only necessary for technical reasons.
However, this comparison indicates that there might be an underlying common problem with using relative entropy and the classical Fisher information in the discrete setting since this leads to a logarithmic structure, which cannot be removed as in the continuous setting due to the lack of a chain rule.
This logarithmic structure naturally creates issues when the measures tend to zero somewhere in the domain.

\textbf{General setting.} We now discuss the various possible generalisations of the setting considered in this article. 
In Section~\ref{sec:scale-sep} we limited ourselves, for simplicity, to the case of two macro-states, i.e. $|\Y|=2$, each of which had the same number of micro-states $n=|\Z|$. Our multiscale results easily generalise to the case with more than two macro-states,  each with a different (but finite) number of macro-states, i.e. $\Y$ is an arbitrary finite set and $\X=\cup_{y\in\Y} \{y\}\times \Z_y$.  

Throughout this article we restrict ourselves to finite state-space $\X$. While the construction of the coarse-grained and effective dynamics straightforwardly generalises to the setting of countable state-space, the quantitative estimates do not since the various constants involved explicitly depend on the dimensions of $\X,\Y,\Z$. We expect quantitative estimates with appropriate modifications (which involve suitable  norms) to hold for countable state space, since similar results also exist for diffusion processes on unbounded state-space $\R^d$. This is left to future work.

This article is restricted to the setting of jump processes with linear jump rates. A crucial open question deals with coarse-graining of jump processes with nonlinear jump rates, for instance chemical-reaction networks which are important to model biochemical systems. The construction of effective dynamics as proposed in this work does not straightforwardly generalise to this setting since there is no simple way to build a natural closure for the projected process due to the state-dependent jump rates. 

\textbf{Discrete-time Markov chains. }
While in this article we have focussed on continuous-time Markov chains, discrete-time Markov chains are often employed in practice.
We conjecture that our results naturally generalise to discrete-time Markov chains.
Given a transition matrix $P$ for a discrete-time Markov chain, the evolution of the corresponding probability distribution is given by $\mu_n = P \mu_{n-1}$ for $n \geq 1$.
Therefore, $\mu_n = P^n \mu_0$, for a given initial distribution $\mu_0$.
Hence, if we replace the semigroup $e^{tL^T}$ in our theory with $P^n$ we expect to obtain similar results.
In particular, following the proof of Lemma \ref{lem:CG}, the coarse-grained variable satisfies $\hat{\mu}_n = \hat{P}(n-1) \hat{\mu}_{n-1}$ with
\begin{align*}
\hat{P}(n)(y_1,y_2) = \sumsum_{x_1 \in \Lambda_{y_1}, x_2 \in \Lambda_{y_2}} P(x_1,x_2) \mu_n(x_2 \vert y_2),
\end{align*}
for $y_1,y_2 \in \Y$.
Similarly, we define the effective transition matrix by
\begin{align*}
P_{\text{eff}}(y_1,y_2) = \sumsum_{x_1 \in \Lambda_{y_1}, x_2 \in \Lambda_{y_2}} P(x_1,x_2) \rho(x_2\vert y_2),
\end{align*}
where $y_1,y_2 \in \Y$ and $\rho$ is the stationary measure.
We point out that to guarantee the existence and uniqueness of a stationary measure in the discrete-time setting it is necessary to additionally assume that the Markov chain is aperiodic.

\paragraph{Acknowledgements.}The authors thank the anonymous referee for valuable suggestions and comments. The authors would like to thank Matthias Erbar and Andr{\'e} Schlichting for discussions on functional inequalities. The authors also thank Maksim Maydanskiy for providing arguments used in the proof of Proposition~\ref{lem:forKol-exp}. 
BH was funded by the Deutsche Forschungsgemeinschaft (DFG, German Research Foundation) under Germany's Excellence Strategy -- EXC 2075 -- 390740016 and acknowledges the support by the Stuttgart Center for Simulation Science (SimTech).
The work of US is supported by the Alexander von Humboldt foundation. 

\begin{appendix}

\section{Properties of irreducible continuous-time Markov chains}\label{app:forKol-low}

In this section we state two general results for irreducible Markov jump processes.  Throughout this section  $M\in\R^{|\Z|\times |\Z|}$ is an irreducible generator on a finite space $\Z$ and $\zeta_0\in\P(\Z)$. Let $\zeta\in C^1([0,T];\P(\Z))$ be the solution to the forward Kolmogorov equation
\begin{equation}\label{app-eq:forKol}
\begin{aligned}
\partial_t\zeta&=M^T\zeta,\\
\zeta|_{t=0}&=\zeta_0.
\end{aligned}
\end{equation}
Furthermore, let $\stat\in\P_+(\X)$ be the stationary measure for~\eqref{app-eq:forKol}, i.e.\ $M^T\stat=0$

The following result provides a lower bound for the solution of the forward Kolmogorov equation~\eqref{app-eq:forKol} for short and intermediate times. 
For long times, i.e.~for $t \rightarrow \infty$, we obtain a lower bound from the convergence to the stationary measure in Proposition \ref{lem:forKol-exp} and since the stationary measure has full support.

\begin{prop}\label{lem:forKol-low}
The solution $\zeta\in C^1([0,\infty);\P(\Z))$ to the forward Kolmogorov equation~\eqref{app-eq:forKol} satisfies the following bounds:
\begin{enumerate}
\item For any $\delta\in(0,1)$ and $t\in [0,\delta]$, there exists $c=c(\delta)>0$ and $N\in\mathbb N$ independent of $z\in\Z$ such that 
\begin{equation}\label{app-eq:delta}
\forall z\in\Z: \ \zeta_t(z)\geq c(\delta)t^{N}.
\end{equation}    
\item For any $0<\delta<\tau <\infty$ and $t\in [\delta,\tau]$, there exists $c=c(\delta,\tau)>0$ independent of $z\in\Z$ such that  
\begin{equation}\label{app-eq:delta-tau}
\forall z\in\Z: \ \zeta_t(z)\geq c(\delta,\tau).
\end{equation}
If $\zeta_0(z) > 0$ for all $z \in \Z$, the estimate \eqref{app-eq:delta-tau} also holds for $\delta = 0$.
\end{enumerate}
\end{prop}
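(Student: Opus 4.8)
The plan is to avoid any delicate spectral analysis of $M^T$ and instead exploit the non-negativity structure hidden in the matrix exponential. First I would fix $\lambda > \max_{z\in\Z}|M(z,z)|$ and write $M = A - \lambda I$, so that $A \coloneqq M + \lambda I$ has strictly positive diagonal entries and off-diagonal entries $A(z,z') = M(z,z')\ge 0$; in particular $A$, and hence $A^T$, is entrywise non-negative. Since the solution is $\zeta_t = e^{tM^T}\zeta_0 = e^{-\lambda t}e^{tA^T}\zeta_0$, expanding the exponential gives
\begin{equation*}
\zeta_t(z) = e^{-\lambda t}\sum_{k=0}^\infty \frac{t^k}{k!}\bigl((A^T)^k\zeta_0\bigr)(z),
\end{equation*}
and every summand is non-negative because $(A^T)^k$ and $\zeta_0$ have non-negative entries. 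The whole strategy is then to lower-bound $\zeta_t(z)$ by retaining a single, carefully chosen term of this series.

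The key step, and the one I expect to be the main obstacle, is to produce a single power $N\in\N$, independent of $z$, for which the $k=N$ term is strictly positive at every $z\in\Z$. I would choose $z_0$ with $\zeta_0(z_0)>0$ (such a state exists since $\zeta_0\in\P(\Z)$) and set $N \coloneqq |\Z|$. The argument is combinatorial: the directed graph with an edge $z\to z'$ whenever $A(z,z')>0$ is strongly connected by irreducibility of $M$, so from $z_0$ there is a simple directed path to any target $z$ of length $\ell\le |\Z|-1\le N$; since the positive diagonal of $A$ supplies a self-loop at $z_0$, prepending $N-\ell$ self-loops yields a walk of length exactly $N$ from $z_0$ to $z$, whence $(A^N)(z_0,z)>0$. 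Because $\Z$ is finite this gives $c_0 \coloneqq \min_{z\in\Z}(A^N)(z_0,z)>0$, and using $\bigl((A^T)^N\zeta_0\bigr)(z) = \sum_{z'}(A^N)(z',z)\zeta_0(z') \ge (A^N)(z_0,z)\zeta_0(z_0)$ I obtain the uniform estimate
\begin{equation*}
\zeta_t(z) \ge e^{-\lambda t}\,\frac{t^N}{N!}\,(A^N)(z_0,z)\,\zeta_0(z_0) \ge e^{-\lambda t}\,\frac{c_0\,\zeta_0(z_0)}{N!}\,t^N .
\end{equation*}

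From this single inequality both claims follow by elementary monotonicity. For $t\in[0,\delta]$ with $\delta\in(0,1)$ I bound $e^{-\lambda t}\ge e^{-\lambda\delta}$ to get $\zeta_t(z)\ge C(\delta)\,t^N$ with $C(\delta) = e^{-\lambda\delta}c_0\zeta_0(z_0)/N!$, which proves \eqref{app-eq:delta}. For $t\in[\delta,\tau]$ with $0<\delta<\tau$ I instead use $t^N\ge\delta^N$ together with $e^{-\lambda t}\ge e^{-\lambda\tau}$ to obtain the constant lower bound $\zeta_t(z)\ge C(\delta,\tau) = e^{-\lambda\tau}\delta^N c_0\zeta_0(z_0)/N!$, which proves \eqref{app-eq:delta-tau}; both constants are manifestly independent of $z$. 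Finally, in the case $\zeta_0(z)>0$ for all $z$ I would retain the $k=0$ term instead, giving $\zeta_t(z)\ge e^{-\lambda t}\zeta_0(z)\ge e^{-\lambda\tau}\min_{z'\in\Z}\zeta_0(z')>0$ for $t\in[0,\tau]$, which extends \eqref{app-eq:delta-tau} to $\delta=0$.
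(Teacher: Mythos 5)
Your proof is correct, and it follows the same basic strategy as the paper (write $M$ as a non-negative matrix minus a multiple of the identity, expand the matrix exponential, and keep one positive term produced by irreducibility), but with two refinements worth noting. First, the paper takes $\nu=\sup_z|M(z,z)|$, so its non-negative matrix $P=M+\nu I$ may have a zero diagonal entry; consequently it only gets $(P^T)^{N(z)}(z,y)>0$ for a $z$-dependent path length $N(z)$, and must then unify the exponents over $z$ afterwards. By instead taking $\lambda$ \emph{strictly} larger than $\max_z|M(z,z)|$ you make every diagonal entry of $A=M+\lambda I$ strictly positive, and the self-loops let you pad every simple path to the single uniform length $N=|\Z|$, so $(A^N)(z_0,z)>0$ for all $z$ at once — this is cleaner and sidesteps the exponent-unification step (where, incidentally, the paper's $\min_z N(z)$ should really be $\max_z N(z)$, since $t\mapsto t^k$ is decreasing in $k$ on $[0,1]$). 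Second, for the $\delta=0$ case with positive initial data, you simply retain the $k=0$ term of the series, $\zeta_t(z)\ge e^{-\lambda t}\zeta_0(z)$, whereas the paper integrates the ODE to get $\zeta_t(z)\ge\zeta_0(z)-\tilde c t$ on a short interval and then glues this to the interior bound; your route is shorter and gives the uniform constant on all of $[0,\tau]$ in one line. Both arguments are sound; yours trades the paper's slightly more hands-on bookkeeping for a uniform combinatorial choice up front.
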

\begin{proof}
Since $M$ is a irreducible generator on a finite space $\Z$, using $\nu\coloneqq\sup_{z\in\Z}|M(z,z)|$ we can define a $P\in\R^{|\Z|\times|\Z|}$ with non-negative entries via $M=P-\nu I$, where $I$ is the identity matrix. Furthermore we have $\zeta_t = e^{tM^T} \zeta_0$, with $e^{tM^T}= e^{t(P^T-\nu I)}=e^{-\nu t}e^{tP^T}=e^{-\nu t}\sum_{n\geq 0} \frac{t^n (P^T)^n}{n!} $. Since $M$ is irreducible for any $z,z'\in\Z$ with $z\neq z'$, there exists a  finite sequence $z_0,z_1\ldots,z_N\in\Z$ containing no doubled points with $z_0=z'$, $z_N=z$ and $M(z_n,z_{n+1})>0$. Since none of the points in the sequence repeat it follows that $P(z_i,z_{i+1})=M(z_i,z_{i+1})>0$ and therefore we find
\begin{equation}\label{eq:P^N}
\begin{aligned}
(P^T)^N(z,z')&=\sum_{\alpha_1,\ldots,\alpha_{N-1}\in\Z} P^T(z',\alpha_1) P^T(\alpha_1,\alpha_2) \ldots P^T(\alpha_{N-1},z) \\
&\geq P^T(z',z_{N-1}) P^T(z_{N-1},z_{N-2}) \ldots P^T(z_1,z) \\
&= P(z,z_{1}) P(z_{1},z_{2}) \ldots P(z_{N-1},z')>0.
\end{aligned}
\end{equation}
In the case $z = z'$ we choose another $\tilde{z} \in \Z$ with $\tilde{z} \neq z$. Again since $M$ is irreducible there exists a finite sequence $\hat{z}_0,\hat{z}_1, \dots, \hat{z}_{\hat{N}} \in \Z$ containing no doubled points with $\hat{z}_0 = z$ and $\hat{z}_N = \tilde{z}$ and $M(\hat{z}_n,\hat{z}_{n+1}) > 0$. Similarly there exists a finite sequence $\check{z}_0,\dots,\check{z}_{\hat{N}} \in \Z$ containing no doubled points with $\check{z}_0 = \tilde{z}$ and $\check{z}_{\check{N}} = z$ and $M(\check{z}_n,\check{z}_{n+1}) > 0$. Concatenating the two sequences yields a sequence $z_0, \dots, z_N$ with no successive double point such that $z_0 = z_N = z$ and $M(z_n,z_{n+1}) > 0$. Thus we can proceed as in the case $z \neq z'$ and obtain $(P^T)^N(z,z) > 0$.
Consequently, since $P$ has non-negative components, we find 
\begin{equation*}
e^{tP^T}(z,z') = \sum_{n\geq 0} \frac{t^n (P^T(z,z'))^n}{n!} \geq  \frac{t^N (P^T(z,z'))^N}{N!}>0.
\end{equation*}
Since $\zeta_0\in\P(\Z)$, there exists $y\in\Z$ with $\zeta_0(y) \geq 1/|\Z|$, and for any $z\in\Z$ we find
\begin{equation}\label{app-eq:general-lower}
\zeta_t(z) = \sum_{z'\in\Z} e^{tM^T}(z,z')\zeta_0(z') \geq e^{tM^T}(z,y)\zeta_0(y) = e^{-\nu t} e^{tP^T}(z,y)\dfrac{1}{|\Z|} = \Bigl( \frac{1}{|\Z|}N! (P^T(z,y))^N e^{-\nu t}\Bigr) t^N.
\end{equation}
Note that in this bound $N$ depends on the choice of $z$, i.e.\ $N=N(z)$.

For $\delta\in (0,1]$ and $t\in [0,\delta]$,~\eqref{app-eq:general-lower} leads to the uniform bound
\begin{equation*}
\zeta_t(z) \geq c(\delta) t^{(\min_z N(z))}, \  \ c(\delta)\coloneqq \frac{1}{(\max_z N(z))!} \bigl( \min_z P^T(z,y) \bigr)^{\min_{z}N(z)} e^{-\nu\delta} \zeta_0(y),
\end{equation*}
which is the claimed estimate~\eqref{app-eq:delta}.  For $\delta\in (0,1]$, $\delta<\tau<\infty$ and $t\in [\delta,\tau]$, we have the bound 
\begin{equation*}
\zeta_t(z) \geq c(\delta,\tau) , \  \ c(\delta,\tau)\coloneqq \frac{1}{(\max_z N(z))!} \bigl( \min_z P^T(z,y) \bigr)^{\min_{z}N(z)} e^{-\nu\tau} \zeta_0(y) \delta^{(\min_z N(z))}.
\end{equation*}
and with $1<\delta<\tau<\infty$, $t\in [\delta,\tau]$ we have the bound 
\begin{equation*}
\zeta_t(z) \geq c(\delta,\tau) , \  \ c(\delta,\tau)\coloneqq \frac{1}{(\max_z N(z))!} \bigl( \min_z P^T(z,y) \bigr)^{\min_{z}N(z)} e^{-\nu\tau} \zeta_0(y).
\end{equation*}
This is the claimed estimate~\eqref{app-eq:delta-tau}.

Finally, assume that $\zeta_0(z) > 0$ for all $z \in \Z$. Rewriting $\partial_t \zeta = M^T \zeta$ in time-integrated form, for any $t\in [0,T]$ and $z\in\Z$ we find
\begin{align}
\zeta_t(z) &= \zeta_0(z) + \int_0^t M^T \zeta_s \,ds \geq \zeta_0(z) + \int_0^t M(z,z) \zeta_s(z)ds = \zeta_0(z) - \sum_{z'\in\Z,z'\neq z } \int_0^t M(z,z') \zeta_s(z)ds \nonumber\\
& \geq \zeta_0(z) - t  \sum_{z'\in\Z,z'\neq z } M(z,z') \geq \zeta_0(z) - \tilde ct, \label{eq:app-full-support}
\end{align}
where the first inequality follows since $M(z,z')\geq 0$ for $z\neq z'$, the second equality follows by definition of a generator, the second inequality follows since $0\leq \zeta_s(z) \leq 1$, and the final inequality follows by using $\tilde c \coloneqq \max_{z\in\Z} \sum_{z'\neq z} M(z,z')$ which is a positive constant as $\sum_{z'\neq z} M(z,z')>0$ for any $z\in\Z$ due to the irreducibility of $M$.  Consequently, there exists a small enough $\tilde\delta>0$ such that for any $t\in [0,\tilde\delta]$ and $z\in\Z$ we have $\zeta_0(z)> \tilde c t$. Using~\eqref{eq:app-full-support} it follows that $\zeta_t(z)>c$ for any $t\in [0,\tilde\delta]$ and $z\in\Z$. Combining this with \eqref{app-eq:delta-tau} completes the proof of the proposition. 
\end{proof}

The following result discusses the exponential convergence to the stationary measure in total-variation distance. Part of this argument is provided by Maksim Maydanskiy~\cite{StackExchange}. 
\begin{prop}\label{lem:forKol-exp}
Let $\zeta\in C^1([0,T];\P(\Z))$ be the solution to the forward Kolmogorov equation~\eqref{app-eq:forKol} and $\stat\in\P_+(\Z)$ the corresponding stationary measure. Then there exist constants $C(\zeta_0),D>0$ independent of time, such that  
\begin{equation*}
\bigl\| \zeta_t - \stat  \bigr\|_{\TV} \leq C(\zeta_0)\,e^{-D t},
\end{equation*}
where $\|\cdot\|_{\TV}$ is the total-variation norm.   
\end{prop}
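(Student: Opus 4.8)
The plan is to derive the exponential decay from a Doeblin-type minorisation together with the contraction of the total-variation distance under stochastic maps, i.e.\ the Dobrushin argument. Fix an arbitrary $t_0>0$ and set $T\coloneqq e^{t_0 M^T}$. Since $M$ is a generator, $T$ maps $\P(\Z)$ into itself and fixes the stationary measure, $T\stat=\stat$, because $M^T\stat=0$. The crucial input is that \emph{all} entries of $T$ are strictly positive, which is exactly the computation already carried out in the proof of Proposition~\ref{lem:forKol-low}: writing $M=P-\nu I$ with $P\ge 0$ and $\nu=\sup_z|M(z,z)|$, irreducibility provides for every pair $z,z'$ a power $(P^T)^N$ with positive $(z,z')$-entry, so that $T(z,z')=e^{-\nu t_0}\sum_{n\ge 0}\frac{t_0^n}{n!}(P^T)^n(z,z')>0$. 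Since $\Z$ is finite, this yields a uniform lower bound $\beta\coloneqq\min_{z,z'\in\Z}T(z,z')>0$.

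I would then invoke the Dobrushin contraction estimate: for any $\mu,\mu'\in\P(\Z)$,
\begin{equation*}
\|T\mu-T\mu'\|_{\TV}\leq \delta(T)\,\|\mu-\mu'\|_{\TV},\qquad \delta(T)\coloneqq 1-\min_{z',z''\in\Z}\sum_{z\in\Z}\min\bigl(T(z,z'),T(z,z'')\bigr).
\end{equation*}
Because each summand satisfies $\min(T(z,z'),T(z,z''))\geq\beta$, we get $\sum_{z}\min(T(z,z'),T(z,z''))\geq|\Z|\beta$ for all $z',z''$, hence $\delta(T)\leq 1-|\Z|\beta$; this upper bound lies in $[0,1)$, and if it equals zero the conclusion is immediate, so we may set $r\coloneqq 1-|\Z|\beta\in(0,1)$. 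Applying the estimate with $\mu=\zeta_0$ and $\mu'=\stat$ and iterating over $n$ time-steps of length $t_0$ (using $T\stat=\stat$) produces $\|\zeta_{nt_0}-\stat\|_{\TV}\leq r^{\,n}\,\|\zeta_0-\stat\|_{\TV}$.

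For an arbitrary time $t\geq 0$ I would write $t=nt_0+s$ with $n\in\N$ and $s\in[0,t_0)$, and use that $e^{sM^T}$ is itself a stochastic map fixing $\stat$, hence total-variation nonexpansive, to obtain $\|\zeta_t-\stat\|_{\TV}\leq\|\zeta_{nt_0}-\stat\|_{\TV}\leq r^{\,n}\|\zeta_0-\stat\|_{\TV}$. Since $n\geq (t-t_0)/t_0$, this gives $r^{\,n}\leq r^{-1}e^{-Dt}$ with $D\coloneqq -t_0^{-1}\log r>0$, and the claimed bound follows with $C(\zeta_0)\coloneqq r^{-1}\|\zeta_0-\stat\|_{\TV}$. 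The only genuinely non-routine ingredient is the Dobrushin contraction lemma together with the verification that its coefficient is strictly below one; everything else is either borrowed verbatim from Proposition~\ref{lem:forKol-low} or elementary bookkeeping. As an alternative one could argue spectrally: by Perron--Frobenius $0$ is a simple eigenvalue of $M^T$ and all others have strictly negative real part, so decomposing $\zeta_0=\stat+w$ with $w$ in the complementary invariant subspace (the coefficient of $\stat$ being $1$ because $\mathbf 1^T w=0$ and both $\zeta_0,\stat$ are probability vectors) gives $\zeta_t-\stat=e^{tM^T}w$, which decays like $e^{-Dt}$ for any $D$ below the spectral gap; the Dobrushin route is preferable here since the positivity it requires is already available from Proposition~\ref{lem:forKol-low}.
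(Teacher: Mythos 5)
Your argument is correct, but it takes a genuinely different route from the paper. The paper argues spectrally: Gerschgorin's theorem plus Perron--Frobenius show that $0$ is a simple eigenvalue of $M^T$ and all other eigenvalues have strictly negative real part; one then splits $e^{tM^T}$ via the spectral projection $P_0$ onto the zero eigenspace, shows $P_0\zeta_0=\stat$ by pairing the generalized eigenvectors against the left eigenvector $\mathds{1}$, and bounds the complementary piece by $Ce^{-Dt}$ --- this is essentially the ``alternative'' you sketch in your last sentences. Your main route instead fixes $t_0>0$, uses the positivity computation already available from Proposition~\ref{lem:forKol-low} to get a uniform minorisation $\beta=\min_{z,z'}T(z,z')>0$ for $T=e^{t_0M^T}$, and then applies the Dobrushin contraction coefficient to obtain $\|T^n\zeta_0-T^n\stat\|_{\TV}\leq r^n\|\zeta_0-\stat\|_{\TV}$ with $r=1-|\Z|\beta<1$, interpolating between multiples of $t_0$ by TV-nonexpansiveness of the semigroup. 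The two proofs buy different things: the spectral argument ties the decay rate $D$ to the spectral gap of $M^T$ (up to the usual caveat that possible Jordan blocks force $D$ strictly below the gap and introduce the constant $C(\zeta_0)$, a point the paper passes over quickly), whereas your contraction argument is more elementary --- no generalized eigenvectors or spectral projections --- produces fully explicit constants $D=-t_0^{-1}\log r$ and $C(\zeta_0)=r^{-1}\|\zeta_0-\stat\|_{\TV}$, and reuses the positivity of $e^{t_0M^T}$ already established in the appendix, at the price of a rate that is typically far from sharp. Both are complete proofs of the stated estimate.
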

\begin{proof}
Since $M$ is a generator, using Gerschgorin's circle theorem yields that the real part of its spectrum is non-positive, i.e.\ 
\begin{equation*}
\operatorname{Re}(\sigma(M)) \leq 0.
\end{equation*}
Furthermore, applying the Perron-Frobenius theorem as in the proof of Theorem \ref{prop:eff-to-aver-eps} yields that $\lambda = 0$ is a simple eigenvalue and all other eigenvalues have strictly negative real part.
Therefore, $M^T$ also has a simple eigenvalue zero and the remaining eigenvalues have negative real part.
Additionally, since $\rho$ is the unique stationary measure the eigenspace corresponding to the zero eigenvalue is spanned by $\rho$.

Using $P_0$ as the projection onto the zero eigenspace of $L^T$ we obtain
\begin{equation*}
e^{tM^T} = [(I-P_0) + P_0] e^{tM^T} [(I-P_0) + P_0] 
= (I-P_0) e^{tM^T} (I-P_0) + P_0 e^{tM^T} P_0,
\end{equation*}
where we have used $P_0 (I-P_0) = (I-P_0)P_0 = 0$ and that the projections commute with the semigroup $e^{tM^T}$.
Furthermore, we note that $P_0 e^{tM^T} P_0 = P_0$ since $M^TP_0 = 0$.
Hence, using $\zeta_t = e^{tM^T}\zeta_0$ for initial data $\zeta_0 \in P(\Z)$ we find
\begin{equation}\label{app:eq-pre-result-a2}
\left\| \zeta_t - \stat \right\|_{\TV} \leq \bigl\| (I-P_0) e^{tM^T} (I-P_0) \zeta_0 \bigr\|_{\TV} + \left\| P_0 \zeta_0 - \stat\right\|_{\TV}.
\end{equation}
Since $(I-P_0)$ is a spectral projection onto the eigenspaces corresponding to the eigenvalues with strictly negative real part there exists a $D > 0$ such that
\begin{equation*}
\bigl\| (I-P_0) e^{tM^T} (I-P_0) \zeta_0 \bigr\|_{\TV} \leq C(\zeta_0) e^{-Dt}.
\end{equation*}

Following \cite{StackExchange}, we now argue that $P_0 \zeta_0 = \rho$ for any $\zeta_0 \in \P(\Z)$, which concludes the proof by substituting the inequality above into~\eqref{app:eq-pre-result-a2}.
Since $M^T$ is a finite-dimensional matrix, we can write $\zeta_0$ as a linear combination of the (generalized) eigenvectors of $M^T$, i.e.
\begin{equation}\label{eq:spec-decom}
\zeta_0 = c\rho + \sum_{i = 1}^{\vert \Z \vert - 1} c_i \rho_i,
\end{equation}
where coefficients $c, c_i \in \R$ and $\rho_i$  ($i\in\{1,\ldots,|\Z|-1\}$) are the generalized eigenvectors corresponding to the eigenvalues of $M^T$ with strictly negative real part.
Since $M$ is a generator, the left eigenvector of $M^T$ corresponding to the eigenvalue zero is given by $\mathds{1} = (1,\dots,1)$.
In particular, this yields $\mathds{1} \rho_i = 0$ since 
\begin{align*}
0 = \mathds{1} 0 = \mathds{1} (M^T - \lambda_i I)^n \rho_i = (-\lambda_i)^n \mathds{1} \rho_i,
\end{align*}
where we used that $\rho_i$ is the generalized eigenvector corresponding  to the eigenvalue $\lambda_i \neq 0$ and $\mathds{1}M^T=0$. Since $\zeta_0, \rho \in \P(\Z)$, using~\eqref{eq:spec-decom} we obtain
\begin{equation*}
1 = \mathds{1} \zeta_0 = \mathds{1} \Bigl(c \rho + \sum_{i = 1}^{\vert \Z \vert - 1} c_i \rho_i\Bigr) = c \mathds{1} \rho = c.
\end{equation*}
This proves $P_0 \zeta_0 = c\rho = \rho$.
\end{proof}

We now discuss the constants in Propositions \ref{lem:forKol-low} in more detail if $M=N$, where $N$ is the generator of the effective dynamics, see \eqref{eq:eff}. It turns out that we can obtain constants, which do not depend on $N$, but only on the full generator $L$. In particular, the constants are thus independent of the choice of the coarse-graining map $\xi : \X \mapsto \Y$.

\begin{prop}\label{prop:lowerBounds_eff_dyn}
    Let $N$ be the generator of the effective dynamics defined in \eqref{def:eff-gen} and let $\eta \in C^1([0,\infty);\P(\Y))$ be the solution of the corresponding forward Kolmogorov equation \eqref{eq:eff} with initial data $\eta_0 \in \P(\Y)$. Then the following estimates hold:
    \begin{enumerate}
        \item For any $\delta \in (0,1)$ exists $c = c(\delta,L,|\X|) > 0$ independent of $N$ such that
        \begin{align}
            \forall y \in \Y \text{ and } t \in [0,\delta] : \ \eta_t(y) \geq c t^{|\X|}.
            \label{app-eq:lowerboundSmallTime_N}
        \end{align}
        \item For any $0 < \delta < \tau < \infty$ exists $c = c(\delta,\tau,L,|\X|) > 0$ independent of $N$ such that
        \begin{align}
            \forall y \in \Y \text{ and } t \in [\delta,\tau] : \ \eta_t(y) \geq c.
            \label{app-eq:lowerboundIntermediateTime_N}
        \end{align}
        If $\eta_0(y) > 0$ for all $y \in \Y$, the estimate \eqref{app-eq:lowerboundIntermediateTime_N} also holds for $\delta = 0$.
    \end{enumerate}
\end{prop}
\begin{proof}
    We first prove the estimates \eqref{app-eq:lowerboundSmallTime_N} and \eqref{app-eq:lowerboundIntermediateTime_N} by adapting the proof of Proposition \ref{lem:forKol-low}.
    For that we note that
    \begin{align*}
        \nu_N \coloneqq \sup_{y \in \Y} |N(y,y)| = \sup_{y \in \Y} \Bigl|\sumsum_{x_1,x_2 \in \Lambda_y} L(x_1,x_2) \rho(x_1 \vert y)\Bigr| \leq \sum_{x \in \X} |L(x,x)| =: \bar{\nu},
    \end{align*}
    where we used that $\rho(x_1 \vert y) \leq 1$. 
    Moreover, let $y_1 \neq y_2$ such that $N(y_1,y_2) > 0$. Then it holds that
    \begin{align*}
        N(y_1,y_2) = \sumsum_{x_1 \in \Lambda_{y_1}, x_2 \in \Lambda_{y_2}} L(x_1,x_2) \rho(x_1 \vert y_1) \geq \inf_{(x_1,x_2) \in p(L)} L(x_1,x_2) \rho(x_1) > 0.
    \end{align*}
    where $p(L) \coloneqq \{(x_1,x_2) \,:\, L(x_1,x_2) > 0\}$ is the positivity set of $L$. Here, we used that $x_1 \neq x_2$ since $y_1 \neq y_2$ which gives $L(x_1,x_2) \geq 0$ and $\rho(x_1 \vert y_1) \geq \rho(x_1)$ which follows by definition of the conditional measure, see \eqref{def:marg-cond}.
    Finally, we also used that $p(L) \cap (\Lambda_{y_1} \times \Lambda_{y_2}) \neq \emptyset$ since $N(y_1,y_2) > 0$.
    
    Next, define $P = N + \bar{\nu} I$. Since $\nu_N \leq \bar{\nu}$ the matrix $P \in \R^{|\Y| \times |\Y|}$ has only non-negative entries.
    With this we can write
    \begin{align*}
        e^{tN^T} = e^{t(P^T - \bar{\nu}I)} = e^{-\bar{\nu} t} e^{tP^T} = e^{-t\bar{\nu}} \sum_{n \geq 0} \dfrac{t^n (P^T)^n}{n!}.
    \end{align*}
    The generator $N$ is irreducible by Lemma \ref{lem:eff-irred} and hence, for any $y, y^\prime \in \Y$ with $y \neq y^\prime$ exists a finite sequence $y_0,y_1,\dots,y_m \in \Y$ containing no doubled points with $y_0 = y$, $y_m = y^\prime$ and $N(y_n,y_{n+1}) > 0$ for all $n = 0,\dots m-1$.
    Then, we find using $N(y_n,y_{n+1}) = P(y_n,y_{n+1})$ since $y_n \neq y_{n+1}$ that
    \begin{align*}
        (P^T)^m(y,y^\prime) \geq P(y,y_1)P(y_1,y_2)\cdots P(y_{m-1},y^\prime) \geq \left(\inf_{(x_1,x_2) \in p(L)} L(x_1,x_2)\rho(x_1)\right)^{m} \geq \underline{c}
    \end{align*}
    with
    \begin{align*}
        \underline{c} \coloneqq \begin{dcases}
            \left(\inf_{(x_1,x_2) \in p(L)} L(x_1,x_2)\rho(x_1)\right)^{|\X|}, & \text{if } \inf_{(x_1,x_2) \in p(L)} L(x_1,x_2)\rho(x_1) < 1, \\
            \inf_{(x_1,x_2) \in p(L)} L(x_1,x_2)\rho(x_1), & \text{if } \inf_{(x_1,x_2) \in p(L)} L(x_1,x_2)\rho(x_1) \geq 1,
        \end{dcases}.
    \end{align*}
    Here, we use that $1 \leq m \leq |\Y| \leq |\X|$ for all $y_1,y_2 \in \Y$ since the sequence contains no double points.
    For the case $z = z'$ we note that one obtains a sequence with similar properties of length at most $2|\Y|$ by following the construction in the proof of Proposition \ref{lem:forKol-low}.
    Consequently, since $P$ has only non-negative components, we find
    \begin{align*}
        e^{tP^T}(y,y^\prime) \geq \dfrac{t^m (P^T)^m(y,y^\prime)}{m!} \geq \dfrac{t^m \underline{c}}{|\X|!}
    \end{align*}
    Since $\eta_0 \in \P(\Y)$ there exists at least one $y^\prime \in \Y$ such that $\eta_0(y^\prime) \geq 1/|\Y| \geq 1/|\X|$ and for any $y \in \Y$ holds that
    \begin{align*}
        \eta_t(y) \geq \dfrac{\underline{c}}{(|\X|!)|\X|}e^{-\bar{\nu}t} t^m.
    \end{align*}
    Note that the constant $\underline{c}$ only depends on $L$ and $|\X|$.
    Following the proof of Proposition \ref{lem:forKol-low} with this improved estimate gives the desired result.
\end{proof}

\end{appendix}

{\small
\bibliographystyle{alphainitials}
\bibliography{BibDeskLibrary.bib}
}

\end{document}